\theoremstyle{plain}
\newtheorem{theorem}{Theorem}[section]
\newtheorem{lemma}[theorem]{Lemma}
\newtheorem{proposition}[theorem]{Proposition}
\newtheorem{corollary}[theorem]{Corollary}
\newtheorem{conjecture}[theorem]{Conjecture}
\theoremstyle{definition} 
\newtheorem{remark}[theorem]{Remark}
\newcommand{\lz}{\left(}
\newcommand{\pz}{\right)}
\renewcommand{\epsilon}{\varepsilon}
\newcommand{\C}{\mathbb{C}}
\newcommand{\sumstar}{\sideset{}{^{*}}\sum}
\newcommand{\N}{\mathbb{N}}
\newcommand{\R}{\mathbb{R}}
\newcommand{\lab}{\left|}
\newcommand{\rab}{\right|}
\newcommand{\re}{\mathrm{Re}}
\newcommand{\im}{\mathrm{Im}}
\newcommand{\bfrac}[2]{\lz\frac{#1}{#2}\pz}
\renewcommand{\mod}[1]{\text{ (mod $#1$)}}
\newcommand{\odd}{\mathrm{\ odd}}
\renewcommand{\l}{\ell}
\renewcommand{\phi}{\varphi}
\newcommand{\leg}[2]{\left(\frac{#1}{#2}\right)}
\newcommand{\ch}{\mathrm{conv}}
\newcommand{\nonneg}{\mathrm{nonneg}}
\newcommand{\res}[1]{\underset{#1}{\mathrm{Res\ }}}
\begin{document}
		\title[Moments of real Dirichlet $L$-functions and MDS]{Moments of real Dirichlet $L$-functions and multiple Dirichlet series}
	
	\author{M. \v Cech}
	
	\address[Martin \v Cech]
	{Charles University, Faculty of Mathematics and Physics, Department of Mathematical Analysis and Department of Algebra, Sokolovsk\'a 83, 18600 Praha 8, Czech Republic
	}
	\email{martin.cech@matfyz.cuni.cz}
	
	\maketitle

	\begin{abstract}
		We consider the multiple Dirichlet series associated to the $k$th moment of real Dirichlet $L$-functions, and prove that it has a meromorphic continuation to a specific region in $\C^{k+1}$, which is conditional under the generalized Lindel\"of hypothesis for $k\geq 5$.
		
		As a corollary, we obtain asymptotic formulas for the first three moments with a power-saving error term, and detect the 0- and 1-swap terms in related problems for any $k$ (conditionally under the Generalized Lindel\"of Hypothesis), recovering the recent results of Conrey and Rodgers on long Dirichlet polynomials.
		
		The advantage of our method is its simplicity, since we don't need to modify the multiple Dirichlet series to obtain its meromorphic continuation. As a result, we obtain the asymptotic formulas directly in the form as they appear in the recipe predictions of Conrey, Farmer, Keating, Rubinstein and Snaith.
	\end{abstract}
	

	\section{Introduction and statement of results}
	The study of moments in families of $L$-functions is an important and active area of research in analytic number theory. In this paper, we will consider the moments in the family of real Dirichlet $L$-functions
	\begin{equation}\label{eqn: the studied moment}
		\sumstar_{d\geq1}f\bfrac {d}X L(s_1,\chi_{8d})\dots L(s_k,\chi_{8d}),
	\end{equation}
	where the sum runs over positive odd square-free integers, and $f$ is a smooth function with compact support in $(0,\infty)$. An asymptotic formula for these moments at the central point $s_1=\dots=s_k=1/2$ was first conjectured by Keating and Snaith \cite{KeSn} based on the similar behavior of values in families of $L$-functions and characteristic polynomials of random matrices. Later, Conrey, Farmer, Keating, Rubinstein and Snaith \cite{CFKRS} devised a purely number-theoretic heuristic, the so-called recipe, which provides the following more precise conjecture:
	
	\begin{conjecture}[Conrey, Farmer, Keating, Rubinstein, Snaith]\label{con: cfkrs}
		Let $X$ be large and $S=\{s_1,\dots,s_k\}$ be a set of complex numbers satisfying $|\re(s_j)-1/2|\ll 1/\log X$ and $|\im (s_j)|\ll X^{1-\epsilon}$. Then for some $\delta>0$,
		\begin{equation}\label{eqn: cfkrs conjecture main term}
			\begin{aligned}
				&\sumstar_{d\geq1}f\bfrac{d}{X}L(s_1,\chi_{8d})\dots L(s_k,\chi_{8d})\\
				&=\sum_{J\subset\{1,\dots,k\}}X^{1+\frac{|J|}{2}-\sum\limits_{j\in J}s_j} \tilde f\lz1+\frac{|J|}{2}-\sum_{j\in J}s_j\pz \prod_{j\in J}X(s_j)\cdot  T(S\setminus S_J\cup S_J^-)\\
				&+O(X^{1-\delta}),
			\end{aligned}
		\end{equation}
		where $S_J=\{s_j:j\in J\}$, $S_J^{-}=\{1-s_j :j\in J\}$, $\tilde f(s)$ denotes the Mellin transform of $f$,
		\begin{equation}\label{eqn: recipe term definition}
			T(S):=\frac{2}{3\zeta(2)}\sum_{\substack{n_1\dots n_k=\square,\\n_1\dots n_k\odd}}\frac{a( n_1\dots n_k)}{n_1^{s_1}\dots n_k^{s_k}}, \hspace{15pt} X(s):=\bfrac{\pi}{8}^{s-1/2}\frac{\Gamma\bfrac{1-s}{2}}{\Gamma\bfrac s2},
		\end{equation}
		and
		\begin{equation}\label{eqn: a_n definition}
			a(n)=\prod_{p|n}\lz1+\frac{1}p\pz^{-1}.
		\end{equation}
	\end{conjecture}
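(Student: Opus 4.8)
The plan is to prove Conjecture~\ref{con: cfkrs} by constructing the multiple Dirichlet series
\[
Z(s_1,\dots,s_k;w)=\sumstar_{d\geq1}\frac{L(s_1,\chi_{8d})\cdots L(s_k,\chi_{8d})}{d^{w}},
\]
studying its analytic properties, and then recovering the moment \eqref{eqn: the studied moment} by a Mellin inversion in $w$ against $f$. First I would expand each $L$-function into its Dirichlet series (using the approximate functional equation, or, in the region of absolute convergence, the full series), so that $Z$ becomes a sum over $n_1,\dots,n_k$ and $d$ of $\chi_{8d}(n_1\cdots n_k)/(n_1^{s_1}\cdots n_k^{s_k}d^w)$. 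The inner sum over $d$ of the real character $\chi_{8d}(n_1\cdots n_k)d^{-w}$ is, up to the squarefree sieve and the handling of the power of $2$, a Dirichlet $L$-function in $w$ to a modulus depending on $n_1\cdots n_k$; its polar behavior is governed entirely by whether $n_1\cdots n_k$ is a perfect square. Carrying out the $d$-sum produces a main diagonal contribution supported on $n_1\cdots n_k=\square$, which after the standard Euler product manipulation gives exactly the arithmetic factor $T(S)$ with the weight $a(n)$ of \eqref{eqn: a_n definition} and the constant $2/(3\zeta(2))$; this identifies one of the $2^k$ terms in \eqref{eqn: cfkrs conjecture main term}, namely $J=\emptyset$.

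The remaining $2^k-1$ terms must come from the functional equations: for each subset $J\subset\{1,\dots,k\}$, applying the functional equation $L(s_j,\chi_{8d})=X(s_j)\,(\text{Gauss factor})\,L(1-s_j,\chi_{8d})$ to the variables $s_j$ with $j\in J$ swaps $s_j\mapsto 1-s_j$, introduces the factor $\prod_{j\in J}X(s_j)$, and shifts the effective $d$-exponent, which is the source of the shifted Mellin transform $\tilde f(1+|J|/2-\sum_{j\in J}s_j)$ and the power $X^{1+|J|/2-\sum_{j\in J}s_j}$. Concretely I would either (i) establish the meromorphic continuation of $Z$ in all $k+1$ variables to the region described in the abstract, locate its polar divisors, and read off the residues in $w$ at the points $w=1+|J|/2-\sum_{j\in J}s_j$; or (ii) run the CFKRS recipe directly: insert the approximate functional equation for each $L(s_j,\chi_{8d})$, expand $\prod_j(\text{main}_j+\text{dual}_j)$ into $2^k$ pieces indexed by $J$, and for each piece execute the $d$-sum to extract the diagonal, which yields precisely the $J$-term. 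The passage between the $Z$-function description and the recipe sum is where the simplification advertised in the abstract occurs: no smoothing or modification of $Z$ is needed because each functional-equation swap is handled exactly.

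The main obstacle will be controlling the off-diagonal error, i.e. the contribution of the non-square $n_1\cdots n_k$ after the $d$-summation, uniformly in $k$ and in the shift parameters. For the square-free sum over $d$ one uses Möbius to remove the squarefree condition and then Poisson summation (in the spirit of Soundararajan's and Young's treatments of the quadratic family), which converts the $d$-sum into a dual sum whose main term is the square-diagonal and whose error involves character sums $\sum_{\text{mod }\cdot}\leg{\cdot}{\cdot}$ that must be bounded. For $k\leq 3$ these are short enough to estimate unconditionally, giving the power saving $O(X^{1-\delta})$; for $k\geq 4$ one needs the convexity bound and for $k\geq 5$ the full strength of the Generalized Lindelöf Hypothesis to push the error past the barrier $X^{1+\max_J(\text{something})}$, matching the conditional statement of the abstract. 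A secondary technical point is the bookkeeping of the $2$-adic and squarefree local factors so that the constant really comes out as $2/(3\zeta(2))$ and the weights really are $a(n)=\prod_{p\mid n}(1+1/p)^{-1}$; this is a finite Euler-product check but must be done carefully. I would also need to verify that the poles of $Z$ in $w$ are simple and that no spurious poles arise from the interaction of the $k$ functional equations in the stated region, which is the analytic heart of the argument.
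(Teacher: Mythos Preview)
The statement you are attempting to prove is stated in the paper as a \emph{conjecture}, not a theorem; the paper does not prove it in full. What the paper actually establishes is Theorem~\ref{thm: main theorem} (meromorphic continuation of $A(s_1,\dots,s_k,w)$ to the explicit region \eqref{eqn: hyperplanes for A}), from which it deduces the conjecture only for $k\le 3$ (Corollary~\ref{cor: first three moments}), and for general $k$ only the $0$- and $1$-swap terms in restricted settings (Corollaries~\ref{cor: moments to the right of critical line} and~\ref{cor: long Dirichlet polynomials}). Your outline matches the paper's strategy quite closely --- form the multiple Dirichlet series, swap the order of summation to get an $L$-function in $w$, use the functional equations $\sigma_J$ to generate the other poles, and read off residues --- but you overstate what this buys.

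The genuine gap is your expectation that GLH plus Poisson/functional-equation manipulations will ``push the error past the barrier'' for all $k$. The paper shows explicitly why this fails: after applying the functional equation in $w$ (the analogue of Soundararajan's Poisson step) and taking convex hulls under all $\sigma_J$, the resulting region of holomorphy is still bounded by the constraint $\re(s_{j_1}+s_{j_2}+s_{j_3}+s_{j_4}+2w)>4$. At the central point $s_j=1/2$ this forces $\re(w)>1$, so one cannot shift the $w$-contour past $w=1$ for $k\ge 4$, and in particular cannot reach the $2$-swap poles at $w=2-s_{j_1}-s_{j_2}$. This obstruction is structural, not a matter of bounding character sums more cleverly; GLH is already assumed and only serves to make the region of absolute convergence as large as possible. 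Detecting the $\ell$-swap terms for $\ell\ge 2$ would require iterating the functional equations in $s_j$ and $w$ in a way that genuinely enlarges the convex hull, which neither your proposal nor the paper achieves. So your plan, correctly executed, reproduces the paper's partial results but does not prove Conjecture~\ref{con: cfkrs}.
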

	\begin{remark}
		$T(S)$ in \eqref{eqn: recipe term definition} should be understood as the meromorphic continuation of the Dirichlet series on the right-hand side. From \cite[Lemma 1 and Section 5.1]{CoRo}, we have
		\begin{equation}\label{key}
			T(S)=\frac{2}{3\zeta(2)}\prod_{j=1}^k\zeta(2s_j)\prod_{\substack{j_1,j_2=1\\j_1\neq j_2}}^k\zeta(s_1+s_2)\cdot E(S),
		\end{equation} where $E(S)$ is an Euler product that is absolutely convergent if $\re(s_j)>1/4$ for each $j=1,\dots,k$. Therefore the right-hand side of \eqref{eqn: cfkrs conjecture main term} is defined at least if $1/4<\re(s_j)<3/4$ for each $j=1,\dots,k$. In fact, our Theorem \ref{thm: main theorem} implies that we have a meromorphic continuation to a larger region.
		
		The individual terms $T(S\setminus S_J\cup S_J^-)$ have some poles, for instance if $s_j=1/2$ for some $j$, but the whole right-hand side of \eqref{eqn: cfkrs conjecture main term} is analytic in the stated region. 
	\end{remark}
	
	The first results towards Conjecture \ref{con: cfkrs} are due to Jutila \cite{Jut}, who considered the moments at the central point $s_1=\dots=s_k=1/2$ and obtained an asymptotic formula with power-saving error term for $k=1$, and the main term in the asymptotic formula for $k=2$. Soundararajan \cite{Sou} then obtained the asymptotic formula for $k=2,3$ with a power-saving error term. These remain the highest known moments for which an asymptotic formula with a power-saving error term is known.
	
	In the past years, some progress in the case $k=4$ was made. Based on the work of Soundararajan and Young \cite{SoYo}, Shen \cite{She} obtained the main term in this case under the generalized Riemann hypothesis (GRH). Recently, Li \cite{Li} managed to obtain a similar result as \cite{SoYo} unconditionally, and it was extended to the case of real Dirichlet $L$-functions by Shen and Stucky \cite{ShSt}.
	
	\smallskip
	
	To put the results into further context, let us recall some terminology. On the right-hand side of \eqref{eqn: cfkrs conjecture main term}, the terms with $|J|=\l$ are called the $\l$-swap terms, with the 0-swap term also being the diagonal one. Therefore there is one diagonal term of size $X$, several 1-swap terms of size $X^{3/2-s_j}$, 2-swap terms of size $X^{2-s_{j_1}-s_{j_2}}$, etc. Note that they all have asymptotically similar size if $\re(s_j)$ are as in Conjecture \ref{con: cfkrs}. The above mentioned work of Jutila may be considered as detecting the diagonal terms, while the application of Poisson summation by Soundararajan enabled him to also detect the 1-swap terms. It shows up that in many related problems, including the study of other families of $L$-functions, we are able to detect the 0- and 1-swap terms, but no others. Examples include the study of moments to the right of the critical line (see \cite[Theorem 1]{GoHo}), or the work of Conrey and Rodgers \cite{CoRo} on long Dirichlet polynomials, who are able to detect the 0- and 1-swap terms assuming the Generalized Lindel\"of hypothesis (GLH, see \eqref{eqn: GLH for individual L-function}). In the approaches based on Poisson summation, the 1-swap terms often don't appear in the conjectured form, but some involved calculations are needed to see that the obtained result and the prediction agree (see for instance \cite[Section 5]{CoRo}).
	
	\smallskip
	
	In this paper, we consider the multiple Dirichlet series (MDS) approach towards Conjecture \ref{con: cfkrs}.
	
	Using Perron's formula, the moment in \eqref{eqn: the studied moment} equals
	\begin{equation}\label{eqn: moment as integral}
		\frac{1}{2\pi i}\int_{(c)}A(s_1,\dots,s_k,w) \tilde f(w)X^wdw,
	\end{equation} where
	\begin{equation}\label{eqn: A definition}
		A(s_1,\dots,s_k,w)=\sumstar_{d\geq 1}\frac{L(s_1,\chi_{8d})\dots L(s_k,\chi_{8d})}{d^w}
	\end{equation} is the associated multiple Dirichlet series. If we had sufficient information about $A(s_1,\dots,s_k,w)$, we could shift the integral in \eqref{eqn: moment as integral} to the left, capture the contribution of the residues and obtain an asymptotic formula for the moment studied. This strategy was first applied by Goldfeld and Hoffstein \cite{GoHo} and further developed by Diaconu, Goldfeld and Hoffstein \cite{DGH}, who proved that it leads to a similar result as that predicted by Keating and Snaith for our family, provided $A(s_1,\dots,s_k,w)$ has a meromorphic continuation to a region in $\C^{k+1}$ that contains the point $(1/2,\dots,1/2,1)$. They were able to obtain such a continuation for $k\leq 3$, thus recovering the results of Jutila and Soundararajan, with a better error term. They also conjectured the appearance of secondary terms of size $X^{3/4}$ in the $k$th moment for $k\geq3$, which were not predicted by other methods, and their existence for $k=3$ was lately confirmed by Diaconu and Whitehead \cite{DiWh}.
	
	In forthcoming work of Baluyot and the author \cite{BaCe}, we will extend the results of Diaconu, Goldfeld and Hoffstein and establish a connection between the recipe prediction and the multiple Dirichlet series approach, namely that the terms that arise on the right-hand side of \eqref{eqn: cfkrs conjecture main term} are in a 1-1 correspondence with the poles of $A(s_1,\dots,s_k,w)$ and their residues, and this phenomenon holds for many other families of $L$-functions. This was already observed by the author in \cite{Cec1} and \cite{Cec2}.
	
	Therefore to prove Conjecture \ref{con: cfkrs}, the remaining essential problem is obtaining a meromorphic continuation of $A(s_1,\dots,s_k,w)$ to a sufficiently large region.
	
	\smallskip	
	In most works in multiple Dirichlet series, such as \cite{DGH}, the strategy to meromorphically continue $A(s_1,\dots,s_k,w)$ is to replace it by a modified multiple Dirichlet series $Z(s_1,\dots,s_k,w)$ by inserting some carefully chosen weights into its definition, such that $Z(s_1,\dots,s_k,w)$ satisfies certain group of functional equations. The drawback of this approach is that it is very difficult to find these weights, and extra work is needed to go back to $A(s_1,\dots,s_k,w)$ once the analytic properties for $Z(s_1,\dots,s_k,w)$ are established. Our goal is to circumvent these disadvantages.
	
	In this paper, we work directly with $A(s_1,\dots,s_k,w)$. Assuming GLH, we are able to obtain a region of meromorphic continuation for each $k\geq 1$. Our results are unconditional for $k\leq 4$, since we only need ``GLH on average'', which is provided by an estimate of Heath-Brown for the fourth moment \eqref{eqn: Heath-Brown fourth moment}. The region we obtain is large enough to enable us to compute the moments for $k=1,2$ and 3 unconditionally, and detect the 1-swap terms in various situations for any $k$ under GLH.
	
	An advantage of working directly with $A(s_1,\dots,s_k,w)$ is, in contrast with many other classical works, that the computations are very simple, and the asymptotic formulas naturally come in the same form as predicted by the recipe in Conjecture \ref{con: cfkrs}.
	
	Our main tool in obtaining the meromorphic continuation of $A(s_1,\dots,s_k, w)$ is the functional equation valid for $L$-functions of all (not necessarily primitive) Dirichlet characters \eqref{eqn: fe for all L-functions}. This is a direct analogue of the Poisson summation formula of Soundararajan \cite[Lemma 2.6]{Sou} used to obtain the third moment or detect the 1-swap terms in the classical setting, so it is not surprising that both techniques lead to similar results. 
	
	A similar strategy was applied by the author in \cite{Cec1}, and developed in other directions by Gao and Zhao \cite{GaZh1}--\cite{GaZh5}.
	
	\smallskip
	
	We now state the main result. For $J\subset\{1,\dots,k\},$ let $\sigma_J:\C^{n+1}\rightarrow\C^{n+1}$ be the affine map
	\begin{equation}\label{key}
		\sigma_J: (s_1,\dots,s_k,w)\mapsto\lz s_1^J,\dots,s_k^J,w+\sum_{j\in J}s_j-\frac{|J|}{2}\pz,
	\end{equation}  where
	\begin{equation}\label{key}
		s_j^J=\begin{cases}
			1-s_j,&\hbox{if $j\in J$},\\
			s_j,&\hbox{if $j\notin J$}.
		\end{cases}
	\end{equation}
	Note that $\sigma_J$ are pairwise commutative involutions, and that $\sigma_{\{j\}}\sigma_{\{i\}}=\sigma_{\{i,j\}}$ for any $1\leq i<j\leq k$.
	We will see in \eqref{eqn: A fe} that there is a functional equation relating $A(s_1,\dots,s_k,w)$ and $A(\sigma_J(s_1,\dots,s_k,w))$.
	
	We denote by $X(s)$ the ratio of the gamma factors that appear in the functional equations of $L(s,\chi_{8d}):$
	\begin{equation}\label{key}
		X(s):=\bfrac{\pi}{8}^{s-1/2}\frac{\Gamma\bfrac{1-s}{2}}{\Gamma\bfrac s2}.
	\end{equation}
	Throughout the paper, $\epsilon$ denotes an arbitrarily small positive number, not necessarily the same at each appearance. All implied constants are allowed to depend on $\epsilon$. Moreover, $f(u)$ denotes a smooth, function that is compactly supported in $(0,\infty)$, and 
	\begin{equation}\label{key}
		\tilde f(s):=\int_{0}^\infty f(u)u^{s-1} du
	\end{equation} denotes the Mellin transform of $f$. Since $f$ is smooth and compactly supported, $\tilde f(s)$ is defined for every $s\in\C$, and it decays faster than any polynomial in vertical strips.
	
	\begin{theorem}\label{thm: main theorem}
		Assume that $k\leq 4$ or that GLH holds. Then $A(s_1,\dots,s_k,w)$ has a meromorphic continuation to the region which is the intersection of the half-spaces
		\begin{equation}\label{eqn: hyperplanes for A}
			\begin{aligned}
				\re(w)&>1/2,\\
				\re(s_i+2w)&>7/4,\hspace{20pt} i\in\{1,\dots,k\},\\
				\re(s_{i_1}+s_{i_2}+2w)&>5/2,\hspace{20pt} \hbox{$i_1,i_2\in\{1,\dots,k\}$,}\\
				\re(s_{i_1}+s_{i_2}+s_{i_3}+2w)&>13/4,\hspace{14pt} \hbox{$i_1,i_2,i_3\in\{1,\dots,k\}$}\\
				\re(s_{i_1}+s_{i_2}+s_{i_3}+s_{i_4}+2w)&>4,\hspace{32pt} \hbox{$i_1,i_2,i_3,i_4\in\{1,\dots,k\}$,}
			\end{aligned}
		\end{equation} (with $i_1,i_2,i_3,i_4$ pairwise different) and their reflections under the transformations $\sigma_J$, $J\subset\{1,\dots,k\}$.
		
		Its only poles in this region are at the points
		\begin{equation}\label{key}
			w=1+\frac{|J|}{2}-\sum_{j\in J}s_j,
		\end{equation} where $J\subset\{1,\dots,k\}$, with residues
		\begin{equation}\label{eqn: main theorem residues}
			\res{w=1+\frac{|J|}{2}-\sum_{j\in J}s_j}A(s_1,\dots,s_k,w)=\prod_{j\in J}X(s_j) \cdot T(S\setminus S_J\cup S_J^-),
		\end{equation} where $T(S\setminus S_J\cup S_J^-)$ is as in Conjecture \ref{con: cfkrs}.
		
		Moreover, $A(s_1,\dots,s_k,w)$ is polynomially bounded in vertical strips. In particular, for any fixed $\sigma_1,\dots,\sigma_k,\omega\in\R$, we have away from the poles
		\begin{equation}\label{eqn: polynomial boundedness def in theorem}
			|A(\sigma_1+it_1,\dots,\sigma_k+it_k,\omega+it_{k+1})|\ll \prod_{j=1}^{k+1}(1+|t_{j}|)^{C}
		\end{equation} for some constant $C$.
	\end{theorem}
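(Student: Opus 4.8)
The plan is to obtain the continuation in three stages: first isolate a region of absolute convergence together with a functional equation; then carry out a single ``unfolding'' of the multiple Dirichlet series that pushes the continuation across the hyperplane $\re(w)=1$ and produces exactly the half-spaces \eqref{eqn: hyperplanes for A}; and finally use the functional equation to spread the result to all the $\sigma_J$-reflections.

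\emph{Convergence and the functional equation.} Expanding each $L(s_j,\chi_{8d})$ into its Dirichlet series, one checks that $A(s_1,\dots,s_k,w)$ converges absolutely and is holomorphic on $\Omega_0:=\{\re(w)>1,\ \re(s_j)\geq 1/2\ \forall j\}$: under GLH the product of $L$-values is $\ll d^\epsilon$ on $\re(s_j)\geq 1/2$, while for $k\leq 4$ H\"older's inequality together with Heath--Brown's fourth moment estimate \eqref{eqn: Heath-Brown fourth moment} gives $\sumstar_{d\leq D}\prod_j|L(s_j,\chi_{8d})|\ll D^{1+\epsilon}$, so the $d$-sum converges for $\re(w)>1$; the same input yields (poly)nomial bounds in the imaginary directions on $\Omega_0$. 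Next, since $8d$ is a fundamental discriminant and $\chi_{8d}$ is even, for every odd squarefree $d$ one has $L(s,\chi_{8d})=d^{1/2-s}X(s)L(1-s,\chi_{8d})$; applying this to the factors indexed by $j\in J$ gives the termwise identity
\[
A(s_1,\dots,s_k,w)=\prod_{j\in J}X(s_j)\cdot A(\sigma_J(s_1,\dots,s_k,w)),\qquad J\subset\{1,\dots,k\},
\]
which extends $A$ meromorphically to $\bigcup_J\sigma_J(\Omega_0)$. However this union still satisfies $\re(w)>1$, so it does not reach the central point $(1/2,\dots,1/2,1)$, and a genuine continuation past $\re(w)=1$ is needed.

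\emph{The unfolding.} For $\re(s_j)$ and $\re(w)$ large, interchange summations to write $A(s,w)=\sum_{n_1,\dots,n_k}\bigl(\prod_j n_j^{-s_j}\bigr)M(n_1\cdots n_k;w)$, where $M(N;w):=\sumstar_{(d,N)=1}\chi_{8d}(N)\,d^{-w}$ and the $n_j$ are odd. Writing $N=N_0m^2$ with $N_0$ squarefree gives $\chi_{8d}(N)=\chi_{8d}(N_0)$, and we split $A=A_{\mathrm{sq}}+A_{\mathrm{nonsq}}$ according to whether $N_0=1$. The square part factors as
\[
A_{\mathrm{sq}}(s,w)=\frac{\zeta(w)}{\zeta(2w)\,(1+2^{-w})}\sum_{\substack{n_1\cdots n_k=\square\\ \text{odd}}}\frac{\prod_{p\mid n_1\cdots n_k}(1+p^{-w})^{-1}}{n_1^{s_1}\cdots n_k^{s_k}},
\]
which continues to $\re(w)>1/2$ (where $\zeta(2w)\neq0$) with a simple pole only at $w=1$, of residue $\frac{2}{3\zeta(2)}\sum_{n_1\cdots n_k=\square,\ \mathrm{odd}}a(n_1\cdots n_k)\prod_jn_j^{-s_j}=T(S)$; by the computation recalled in the remark following Conjecture~\ref{con: cfkrs}, this Dirichlet series also carries the ``spurious'' polar divisors $s_j=1/2$, $s_{j_1}+s_{j_2}=1$, and so on. For the non-square part, quadratic reciprocity turns the inner sum over $d$ into essentially $L(w,\psi_{N_0})/\zeta(2w)$ with $\psi_{N_0}$ a non-principal real character of modulus $O(N_0)$, hence entire in $w$; one then applies the functional equation for (imprimitive) Dirichlet $L$-functions \eqref{eqn: fe for all L-functions} to $L(w,\psi_{N_0})$ and resums the resulting dual series in $n_1,\dots,n_k$. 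The region of absolute convergence of this resummed series, once the moment input (\eqref{eqn: Heath-Brown fourth moment} for $k\leq4$, GLH for $k\geq5$) is fed in to control the remaining sum over $d$, is precisely the intersection of the half-spaces \eqref{eqn: hyperplanes for A}; moreover the polar divisors of $A_{\mathrm{nonsq}}$ along $s_j=1/2$, $s_{j_1}+s_{j_2}=1,\dots$ cancel those of $A_{\mathrm{sq}}$, so that $A=A_{\mathrm{sq}}+A_{\mathrm{nonsq}}$ is holomorphic there apart from the simple pole at $w=1$ inherited from $A_{\mathrm{sq}}$.

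\emph{Reflections, residues and growth.} The functional equation of the first step now extends $A$ from the half-space region to all its $\sigma_J$-reflections, giving the full region in the statement; since $\sigma_J$ maps $\{w'=1\}$ onto $\{w=1+\tfrac{|J|}{2}-\sum_{j\in J}s_j\}$ and the $w$-translation in $\sigma_J$ has Jacobian $1$, these are the only poles, and
\[
\res{w=1+\frac{|J|}{2}-\sum_{j\in J}s_j}A(s,w)=\prod_{j\in J}X(s_j)\ \res{w'=1}A(\sigma_J(s,w))=\prod_{j\in J}X(s_j)\,T(S\setminus S_J\cup S_J^-);
\]
here one also checks that the poles of $X(s_j)$ at $s_j\in\{1,3,5,\dots\}$ are cancelled by zeros of $A(\sigma_{\{j\}}(s,w))$ coming from $L(0,\chi_{8d})=0$. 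Polynomial boundedness in vertical strips holds on $\Omega_0$ by the first step, on the reflected regions by the functional equation together with Stirling's estimate for $X(s)$ away from its poles, and on the rest of the region by the explicit decomposition of the second step (bounding $A_{\mathrm{sq}}$ by $\zeta$-factors and the resummed $A_{\mathrm{nonsq}}$ by convexity, or GLH, for the auxiliary $L$-functions); a Phragm\'en--Lindel\"of argument then gives the uniform bound \eqref{eqn: polynomial boundedness def in theorem}. The main obstacle is the unfolding step: one must carry out the resummation of the non-square part carefully enough to read off simultaneously the exact system \eqref{eqn: hyperplanes for A} --- that is, $\re\bigl(\sum_{i\in I}s_i+2w\bigr)>1+\tfrac34|I|$ for $|I|\leq4$, the cap at $4$ reflecting the fourth-moment input --- and the cancellation of the spurious poles against $A_{\mathrm{sq}}$.
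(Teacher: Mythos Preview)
Your proposal has a genuine gap at precisely the point you flag as ``the main obstacle.'' You assert that, after applying the functional equation \eqref{eqn: fe for all L-functions} to the non-square part and resumming, the region of absolute convergence is \emph{exactly} the intersection of the half-spaces \eqref{eqn: hyperplanes for A}. No mechanism is given for this, and it is not what a direct analysis yields. Carrying out the unfolding carefully (remove the squarefree condition on $d$ by M\"obius, swap sums, apply \eqref{eqn: fe for all L-functions}, and analyse the resulting Gauss-sum series via its Euler product) produces instead the region
\[
R=\{\re(s_j)\geq 1/2,\ \re(w)>1/2,\ \re(2s_j+w)>2\},
\]
which is strictly smaller than the region cut out by \eqref{eqn: hyperplanes for A}: for instance, points with some $\re(s_j)<1/2$ can satisfy \eqref{eqn: hyperplanes for A} but never lie in $R$. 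Your ``cap at $4$'' is not a reflection of the fourth-moment input; it is structural, arising from how the constraints $\re(2s_j+w)>2$ combine with $\re(w)>1/2$ and with the $\sigma_J$-symmetry, and it persists unchanged for $k\geq 5$ under GLH.

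What is missing is the passage from $R$ to the full region: one must take the convex hull $S=\ch\bigl(\bigcup_{J}\sigma_J(R)\bigr)$ via Bochner's tube theorem (which you never invoke), and then \emph{prove} that $S$ coincides with the intersection of \eqref{eqn: hyperplanes for A} and its $\sigma_J$-reflections. This is a nontrivial piece of convex geometry --- one has to determine the $V$-representation of $R$, transport it under each $\sigma_J$, and identify the extremal points and rays of the resulting polyhedron in $\R^{k+1}$ --- and it accounts for the bulk of the work in the paper. Without it, the functional equation alone only gives $\bigcup_J\sigma_J(R)$, which is not convex and not the claimed region. A secondary gap: the asserted cancellation of ``spurious'' poles of $A_{\mathrm{sq}}$ (at $s_j=1/2$, $s_{j_1}+s_{j_2}=1$, etc.) against $A_{\mathrm{nonsq}}$ is delicate and not argued; the paper sidesteps this entirely by never splitting into square and non-square parts, instead removing the squarefree condition on $d$ by M\"obius and treating all $n_1\cdots n_k$ uniformly, so that holomorphy away from the $w$-poles is inherited from the original series rather than recovered by cancellation.
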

	
	In the region \eqref{eqn: hyperplanes for A}, the conditions with $s_{j_1},\dots,s_{j_t}$ appear only if $k\geq t$.
	\begin{remark}\label{remark: reflections of hyperplanes}
		\begin{enumerate}
			\item For $j\in\{1,\dots,k\}$, the reflection under $\sigma_{\{j\}}$ of the half-space $\re(w)>1/2$ is $\re(w+s_j)>1$. For $1\leq t\leq 4$, the half-space $\re(s_{i_1}+\dots+s_{i_t}+2w)>\frac{4+3t}{4}$ is invariant under $\sigma_{\{j\}}$ if $j\in\{i_1,\dots,i_t\},$ and it is \begin{equation}
				\re(s_{i_1}+\dots+s_{i_t}+2s_j+2w)>\frac{4+3t}{4}+1
			\end{equation} otherwise. 
			
			Therefore each half-space in the region in Theorem \ref{thm: main theorem} is given by
			\begin{equation}\label{eqn: higher level type 0 hyperplanes}
				\re(s_{j_1}+\dots+s_{j_\l}+w)>\frac{1+\l}{2},
			\end{equation} or
			\begin{equation}\label{eqn: higher level type t hyperplanes}
				\re(s_{i_1}+\dots+s_{i_t}+2s_{j_1}+\dots+2s_{j_\l}+2w)>\frac{1+3t}{4}+\l,
			\end{equation}
			where $1\leq t\leq 4$, $0\leq\l\leq k-4$, and $s_{i_1},\dots,s_{i_t},s_{j_1},\dots,s_{j_\l}$ are pair-wise different.
			\item In order to prove Conjecture \ref{con: cfkrs} for some $k$, we need continuation past $w=1$ when $s_1=\dots=s_k=1/2$. Theorem \ref{thm: main theorem} allows us to achieve this for $k\leq 3$. For $k\geq 4$, the last condition prevents us from shifting the integral far enough. Note that if $\re(s_j)\geq 1/2$ for each $j=1,\dots,k,$ the conditions in \eqref{eqn: higher level type 0 hyperplanes} and \eqref{eqn: higher level type t hyperplanes} with $\l>0$ don't limit how far the $w$-integral can be shifted.
			\item The description of the region may seem complicated, but is somewhat natural. In particular, the symmetry under $\sigma_J$ follows from the functional equations of $A(s_1,\dots,s_k,w)$, the first condition in \eqref{eqn: hyperplanes for A} essentially comes from our treatment of the square-freeness condition in the sum, the next 3 conditions determine the error term that can be obtained in the cases $k=1,2,3$ (and can be improved with more careful treatment), while the last condition is the obstruction that prevents us from computing the moments for $k\geq 4$ or detecting the 2-swap terms (see also the remarks under Corollary \ref{cor: long Dirichlet polynomials}).
		\end{enumerate}
	\end{remark}
	Theorem \ref{thm: main theorem} is very flexible, as it implies many results on moments and Dirichlet polynomials after a straightforward application of Perron's formula. An essential part in obtaining the theorem in such generality is the determination of the convex hull of the union of a region of meromorphic continuation of $A(s_1,\dots,s_k,w)$ and its transformations by $\sigma_J$, which is achieved in Section \ref{sec: convex hull}.
	
	Note that the residues in \eqref{eqn: main theorem residues} resemble part of the main terms in Conjecture \ref{con: cfkrs}. We will explicitly compute the residue at $w=1$ and the other poles will appear as a consequence of the functional equations.
	
	\smallskip
	
	As a first corollary, we prove an asymptotic formula for the first three moments with a power-saving error term.
	\begin{corollary}\label{cor: first three moments}
		Let $k=1,2$, or $3$, and fix $s_1,\dots,s_k$ with $\re(s_j)\geq 1/2$. Then we have
		\begin{equation}\label{key}
			\begin{aligned}
				&\sumstar_{d\geq 1}f\bfrac{d}{X}L(s_1,\chi_{8d})\dots L(s_k,\chi_{8d})\\
				&=\sum_{J\subset\{1,\dots,k\}}\prod_{j\in J}X(s_j)\cdot X^{1+\frac{|J|}{2}-\sum\limits_{j\in J}s_j}\tilde f\lz1+\frac{|J|}{2}-\sum_{j\in J}s_j\pz T(S\setminus S_J\cup S_J^-)\\
				&+O(X^{\max\{1/2,e_k\}+\epsilon}),
			\end{aligned}
		\end{equation}
		where
		\begin{equation}\label{key}
			e_k=\frac{4+3k}{8}-\sum_{j=1}^k\frac{\re(s_j)}{2}.
		\end{equation}
	\end{corollary}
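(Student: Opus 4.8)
The plan is to derive Corollary~\ref{cor: first three moments} from Theorem~\ref{thm: main theorem} by inserting the latter into the Perron-type integral representation \eqref{eqn: moment as integral}. First I would record that the moment in \eqref{eqn: the studied moment} equals
\begin{equation*}
	\frac{1}{2\pi i}\int_{(c)}A(s_1,\dots,s_k,w)\tilde f(w)X^w\,dw
\end{equation*}
for $c$ sufficiently large (any $c>1$ works since the Dirichlet series \eqref{eqn: A definition} converges absolutely for $\re(w)$ large, the $L$-factors being bounded there). The strategy is then the classical one: shift the contour to the left to a line $\re(w)=c'$ with $c'$ as small as Theorem~\ref{thm: main theorem} permits, picking up residues at the poles $w=1+\frac{|J|}{2}-\sum_{j\in J}s_j$, $J\subset\{1,\dots,k\}$. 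By \eqref{eqn: main theorem residues} the residue at each such pole contributes exactly
\begin{equation*}
	\prod_{j\in J}X(s_j)\cdot X^{1+\frac{|J|}{2}-\sum_{j\in J}s_j}\tilde f\!\left(1+\frac{|J|}{2}-\sum_{j\in J}s_j\right)T(S\setminus S_J\cup S_J^-),
\end{equation*}
which is precisely the corresponding term on the right-hand side of the claimed asymptotic formula. The remaining integral over $\re(w)=c'$ is bounded by $X^{c'+\epsilon}$: here I use the polynomial boundedness \eqref{eqn: polynomial boundedness def in theorem} of $A$ in vertical strips together with the super-polynomial decay of $\tilde f$ to make the $w$-integral (and the fixed $s_j$ dependence) converge, absorbing everything into $X^\epsilon$.

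The crux is choosing $c'$ optimally, i.e.\ determining how far left the contour can be pushed given that $\re(s_j)\ge 1/2$ is fixed. By Remark~\ref{remark: reflections of hyperplanes}(2), when $\re(s_j)\ge 1/2$ for all $j$ the half-spaces of type \eqref{eqn: higher level type 0 hyperplanes} and \eqref{eqn: higher level type t hyperplanes} with $\l>0$ impose no restriction on the $w$-line, so the only binding constraints are $\re(w)>1/2$ and the type-$t$ conditions with $\l=0$, namely $\re(s_{i_1}+\dots+s_{i_t}+2w)>\frac{1+3t}{4}$ for $1\le t\le\min(k,4)$. Solving for $\re(w)$, the most restrictive of these (taking all $t$ of the $s_{i}$ with the smallest real parts, which for $k\le 3$ means all of them when $t=k$) gives the threshold
\begin{equation*}
	\re(w)>\frac{1+3t}{8}-\sum_{j}\frac{\re(s_{i_j})}{2},
\end{equation*}
and over $1\le t\le k$ with $\re(s_j)\ge 1/2$ the supremum is attained at $t=k$, yielding exactly $e_k=\frac{4+3k}{8}-\sum_{j=1}^k\frac{\re(s_j)}{2}$ (one checks $\frac{1+3k}{8}$ versus $\frac{4+3k}{8}$: the half-space is strict and we may take $c'=e_k+\epsilon$, which is $\ge\frac{1+3k}{8}-\sum\frac{\re s_j}{2}+\epsilon$ once $\epsilon$ is chosen appropriately — more cleanly, the union of the reflected half-spaces under all $\sigma_J$ has convex hull whose relevant facet is $\re(w)=e_k$, as guaranteed by the convex-hull computation of Section~\ref{sec: convex hull}). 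Hence we may take $c'=\max\{1/2,e_k\}+\epsilon$, which produces the stated error term $O(X^{\max\{1/2,e_k\}+\epsilon})$.

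There is one bookkeeping point to handle carefully: the individual residue terms $T(S\setminus S_J\cup S_J^-)$ can have poles in the fixed variables $s_j$ (e.g.\ at $s_j=1/2$), even though, as noted in the remark after Conjecture~\ref{con: cfkrs}, the full sum over $J$ is analytic in the relevant region. Since Corollary~\ref{cor: first three moments} only asserts the formula for fixed $s_j$ with $\re(s_j)\ge 1/2$, I would either restrict to $s_j$ away from these coincidences and then invoke analytic continuation in the $s_j$ (both sides being analytic, the diagonal-crossing singularities cancelling), or simply note that the contour shift is valid whenever no pole of $A$ lies on the line $\re(w)=c'$, which holds for generic fixed $s_j$ and extends by continuity. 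The main obstacle is thus not any deep estimate — Theorem~\ref{thm: main theorem} does all the heavy lifting — but rather verifying that the arithmetic of the hyperplane constraints in \eqref{eqn: hyperplanes for A} and their $\sigma_J$-reflections really does collapse, under $\re(s_j)\ge 1/2$, to the single clean bound $\re(w)>\max\{1/2,e_k\}$; this is exactly the content of Remark~\ref{remark: reflections of hyperplanes} and the convex-hull analysis, which I would cite rather than redo.
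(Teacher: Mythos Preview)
Your approach is essentially identical to the paper's: apply Perron's formula, shift the $w$-contour to $\re(w)=\max\{1/2,e_k\}+\epsilon$ using the region supplied by Theorem~\ref{thm: main theorem}, collect the residues \eqref{eqn: main theorem residues} as the main terms, and bound the shifted integral trivially via the polynomial boundedness of $A$ together with the super-polynomial decay of $\tilde f$. The paper's proof is in fact terser than yours---it does not spell out the reduction to level-$0$ constraints or the bookkeeping about coincident poles in the $s_j$---but the logic is the same.

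One small wrinkle worth noting: the half-space you quote from \eqref{eqn: higher level type t hyperplanes} as $\re(s_{i_1}+\dots+s_{i_t}+2w)>\tfrac{1+3t}{4}$ carries what appears to be a typo in the paper; the correct constant, read off from \eqref{eqn: hyperplanes for A}, is $\tfrac{4+3t}{4}$. With that fix, setting $t=k$ immediately gives the threshold $\tfrac{4+3k}{8}-\tfrac12\sum_j\re(s_j)=e_k$, and the parenthetical reconciliation you attempt between $(1+3k)/8$ and $(4+3k)/8$ becomes unnecessary.
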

	At the central point $s_j=1/2$, the exponent $e_k$ for $k=1,2,3$ in the error term above is respectively $5/8, 3/4, 7/8$. We didn't aim for optimizing the error term, it is plausible that a more careful treatment of these cases would lead to improvements. 
	
	In the case $k=1$, the essentially best possible error term (which is $X^{1/4+\epsilon}$ under GRH or $X^{1/2+\epsilon}$ unconditionally) was recently obtained by the author in \cite{first_moment} using the same method, but with a much more careful treatment of the square-freeness condition. 
	
	A similar technique should lead to an improvement for $k=2$ as well, where the essentially optimal error term $X^{1/2+\epsilon}$ is known due to Sono \cite{Sono}.

	For $k=3$, it might be possible to obtain the exponent $3/4$, first proved by Young \cite{You2}. However, due to the presence of the lower order terms in the third moment proved in \cite{DiWh}, it is impossible to obtain a better error without modifying our method to detect the lower order terms as well, which would require a possibility to iterate the functional equations in $s_j$ and $w$. 
	
	We hope to address these questions in a later paper.
	
	A similar conclusion holds for $s_j$ with $\re(s_j)<1/2$, except that one needs to take into account the conditions in \eqref{eqn: higher level type 0 hyperplanes} and \eqref{eqn: higher level type t hyperplanes} with $\l\geq 1$. Alternatively, it is possible to apply the functional equation to translate into the case $\re(s_j)\geq 1/2$.
	
	We prove Corollary \ref{cor: first three moments} for fixed $s_1,\dots,s_k$, but it would be straightforward to obtain a uniform result, at least in the range $\im(s_j)\ll X^{\delta}$ for some small $\delta$ (depending on $C$ in \eqref{eqn: polynomial boundedness def in theorem}). We believe that one should be able to take any $\delta>0$, so that Conjecture \ref{con: cfkrs} holds in a greater uniformity than stated, similarly as in \cite[Theorem 1.2]{Cec1}. See Remark \ref{remark: possible uniformity} for further discussion. It would also be possible to obtain similar results for the moments twisted by a fixed real Dirichlet character. However, it is again unclear what the optimal result would be when it comes to uniformity in the conductor of the twisting character.
	The same also applies to Corollary \ref{cor: moments to the right of critical line} and Corollary \ref{cor: long Dirichlet polynomials}.
	
	\smallskip
	
	As a second corollary, we consider moments to the right of the critical line. In this case, under GLH, these moments can be computed and they are dominated by the diagonal terms. We are able to detect the 1-swap terms as well (note that these are smaller by a power of $X$ if $\re(s_j)>1/2$), with a power saving error term.
	
	\begin{corollary}\label{cor: moments to the right of critical line}
		Assume that $k\leq 4$ or that GLH holds. Let $0<\delta<1/4$, $\nu>0$, and let $s_1,\dots,s_k\in\C$ be fixed with $\re(s_j)\in(1/2+\delta,1/2+(2-\nu)\delta)$ for each $j=1,\dots,k.$ Then
		\begin{equation}\label{eqn: detecting 1-swaps to the right}
			\begin{aligned}
				\sumstar_{d\geq1}&f\bfrac{d}{X}L(s_1,\chi_{8d})\dots L(s_k,\chi_{8d})=X\tilde f(1)T(S)\\
				&+\sum_{j=1}^k X^{3/2-s_j}\tilde f(3/2-s_j)T(S\setminus\{s_j\}\cup\{1-s_j\})+O(X^{1-2\delta+\epsilon}).
			\end{aligned}
		\end{equation}
	\end{corollary}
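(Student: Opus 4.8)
The plan is to deduce Corollary \ref{cor: moments to the right of critical line} from Theorem \ref{thm: main theorem} by a contour-shifting argument in the $w$-variable. Starting from the Perron-type identity \eqref{eqn: moment as integral}, the moment equals
\[
\frac{1}{2\pi i}\int_{(c)}A(s_1,\dots,s_k,w)\tilde f(w)X^w\,dw
\]
for some large $c$, where $s_1,\dots,s_k$ are now fixed with $\re(s_j)\in(1/2+\delta,1/2+(2-\nu)\delta)$. The first step is to check that, for such $s_j$, the point $w=c$ for $c$ slightly larger than $1$ lies in the region of meromorphic continuation guaranteed by Theorem \ref{thm: main theorem}: since $\re(s_j)>1/2$, Remark \ref{remark: reflections of hyperplanes}(2) tells us that the hyperplanes of ``type $\l\geq1$'' in \eqref{eqn: higher level type 0 hyperplanes} and \eqref{eqn: higher level type t hyperplanes} impose no obstruction, so the only binding constraints come from $\re(w)>1/2$ and its reflections $\re(w+s_j)>1$, i.e. $\re(w)>1-\re(s_j)$, plus the finitely many constraints from \eqref{eqn: hyperplanes for A} with no extra $s$'s, which are all satisfied near $w=1$ because $\re(2w)$ is close to $2$, well above $7/4-\re(s_i)$ etc. Thus the whole strip $\re(w)\geq 1-2\delta$ (say, up to $\epsilon$) is inside the region, except for the finitely many polar hyperplanes $w=1+\frac{|J|}{2}-\sum_{j\in J}s_j$.

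The second step is to identify which of these polar points are crossed when we shift the contour from $\re(w)=c$ down to $\re(w)=1-2\delta+\epsilon$. For $J=\emptyset$ the pole is at $w=1$, which is crossed; its residue by \eqref{eqn: main theorem residues} is $T(S)$, contributing the diagonal term $X\tilde f(1)T(S)$. For $J=\{j\}$ a singleton, the pole is at $w=3/2-s_j$, and since $\re(s_j)\in(1/2+\delta,1/2+(2-\nu)\delta)$ we have $\re(3/2-s_j)\in(1-(2-\nu)\delta,1-\delta)$, which lies strictly between $1-2\delta+\epsilon$ and $c$, so these are crossed as well; each contributes $X^{3/2-s_j}\tilde f(3/2-s_j)\prod_{j\in J}X(s_j)\cdot T(S\setminus\{s_j\}\cup\{1-s_j\})$, but here $\prod_{j\in\{j\}}X(s_j)=X(s_j)$ — wait, one must be careful: \eqref{eqn: main theorem residues} gives the residue as $\prod_{j\in J}X(s_j)\cdot T(\dots)$, and the corollary's stated $1$-swap term has no $X(s_j)$ factor; this is reconciled because the residue of $\tilde f(w)X^w A$ at $w=3/2-s_j$ picks up the value $\tilde f(3/2-s_j)X^{3/2-s_j}$ times the residue of $A$, and in the corollary the factor $X(s_j)$ has presumably been absorbed into the definition of $T(S\setminus\{s_j\}\cup\{1-s_j\})$ via the functional-equation identity — I would verify using \cite{CoRo} Lemma 1 (cited around \eqref{key}) that $X(s_j)\,T(S\setminus\{s_j\}\cup\{1-s_j\})$ in the Theorem's normalization equals the $T$ appearing in the Corollary, or simply carry the $X(s_j)$ factor explicitly and note it is an analytic nonzero constant. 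For $|J|\geq2$, the pole at $w=1+\frac{|J|}{2}-\sum_{j\in J}s_j$ has $\re(w)=1+\frac{|J|}{2}-\sum_{j\in J}\re(s_j)<1+\frac{|J|}{2}-\frac{|J|}{2}-|J|\delta=1-|J|\delta\leq 1-2\delta$, using $\re(s_j)>1/2+\delta$; so these poles lie to the \emph{left} of the new contour and are not crossed. (This is exactly where the hypothesis $\re(s_j)>1/2+\delta$ is used, and why only $0$- and $1$-swap terms appear.) One also checks $\re(3/2-s_j)>1-2\delta$ needs $\re(s_j)<1/2+2\delta$, guaranteed by $\re(s_j)<1/2+(2-\nu)\delta$; the slack $\nu$ ensures the $1$-swap poles are strictly inside, so the residue theorem applies cleanly and the shifted contour at $\re(w)=1-2\delta+\epsilon$ avoids all poles.

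The third step is to bound the remaining integral on the line $\re(w)=1-2\delta+\epsilon$ by $O(X^{1-2\delta+\epsilon})$. This uses the polynomial boundedness \eqref{eqn: polynomial boundedness def in theorem} of $A$ in vertical strips together with the super-polynomial decay of $\tilde f(w)$ in vertical strips: writing $w=1-2\delta+\epsilon+it$, we have $|A(s_1,\dots,s_k,w)\tilde f(w)X^w|\ll X^{1-2\delta+\epsilon}(1+|t|)^{C}|\tilde f(1-2\delta+\epsilon+it)|$, and since $\tilde f$ decays faster than $(1+|t|)^{-C-2}$ the integral over $t\in\R$ converges to $O_{f}(X^{1-2\delta+\epsilon})$. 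The main obstacle — really the only subtle point — is the bookkeeping in Steps 1 and 2: one must confirm that the \emph{entire} closed region between the two contours (not just the two lines) lies in the domain of Theorem \ref{thm: main theorem}, which requires checking all the reflected hyperplanes of \eqref{eqn: hyperplanes for A}; here the sign conditions $\re(s_j)>1/2$ make the $\l\geq1$ hyperplanes vacuous (Remark \ref{remark: reflections of hyperplanes}(2)), and the finitely many $\l=0$ hyperplanes such as $\re(s_i+s_{i'}+2w)>5/2$ reduce, at $\re(w)\geq 1-2\delta$, to $\re(s_i+s_{i'})>5/2-2(1-2\delta)=1/2+4\delta$, which holds since $\re(s_i)+\re(s_{i'})>1+2\delta>1/2+4\delta$ as soon as $\delta<1/4$ — consistent with the hypothesis $0<\delta<1/4$. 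With these verifications, assembling the residues from $J=\emptyset$ and $J=\{j\}$ and the error bound yields exactly \eqref{eqn: detecting 1-swaps to the right}.
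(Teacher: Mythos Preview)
Your proposal is correct and follows essentially the same route as the paper: apply Perron's formula, shift the $w$-contour to $\re(w)=1-2\delta+\epsilon$, verify (using Remark \ref{remark: reflections of hyperplanes}(2) and the level-$0$ inequalities together with $\delta<1/4$) that this line lies in the region of Theorem \ref{thm: main theorem}, pick up the residues at $w=1$ and $w=3/2-s_j$, check that the $|J|\ge 2$ poles satisfy $\re(w)<1-2\delta$ and are therefore not crossed, and bound the shifted integral using polynomial boundedness of $A$ against the rapid decay of $\tilde f$. The paper's verification of the level-$0$ constraints is slightly cleaner than yours (it treats all $1\le t\le 4$ at once via the bound $\tfrac{4+3t}{8}-\tfrac{t}{2}(\tfrac12+\delta)=\tfrac12+\tfrac{t}{8}-\tfrac{t\delta}{2}<1-2\delta$ for $\delta<1/4$), but your sample check for $t=2$ generalizes identically.

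One comment on the point that worried you: the residue at $w=3/2-s_j$ in \eqref{eqn: main theorem residues} does carry the factor $X(s_j)$, and the paper's own proof simply records that these poles ``give the 1-swap terms'' without writing them out; the absence of $X(s_j)$ in the displayed formula \eqref{eqn: detecting 1-swaps to the right} is an omission in the statement rather than an absorption into $T$. Your instinct to flag this was right, but your proof is not missing anything --- just carry the $X(s_j)$ factor explicitly and note that it is a fixed nonzero constant depending only on $s_j$.
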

	Note that in \eqref{eqn: detecting 1-swaps to the right}, the terms of size $X^{3/2-s_j}$ are the terms with $|J|=1$ in \eqref{eqn: cfkrs conjecture main term}, and they are $\gg X^{1-2\delta+\nu\delta}$ in our range, so we have a power-saving error term.
	
	A similar result can be proved if $\re(s_j)<1/2$, for instance by an application of the functional equation. Note that in this case, the moment will no longer be dominated by the diagonal terms, but by a higher swap term. In particular, if $s_{j_1},\dots,s_{j_\l}$ is the set of all variables with real part $<1/2$, the largest term will be the $\l$-swap term of size $X^{1+\frac{\l}{2}-\sum\limits_{1\leq n\leq \l}s_{j_n}}$.
	
	\smallskip
	
	Unconditionally, one can detect the diagonal terms in all moments provided the real parts are not too far in the critical strip (see for instance \cite[Theorem 7.7]{Tit}). It would be interesting to see what can be proved unconditionally using our method and whether it would be possible to recover some higher swap terms in this case.
	
	\smallskip
	
	Another possible approach to Conjecture \ref{con: cfkrs} is to approximate the L-functions by long Dirichlet polynomials. Based on the work of Conrey and Keating \cite{CoKe1}--\cite{CoKe5}, there has been a considerable amount of work in this direction in the recent years for different families of $L$-functions (\cite{BaTu, CoFa, CoRo, HaNg}). In this setting, the higher swap terms should appear with increasing length of the polynomial. Current methods usually enable us to detect the 0- and 1-swap terms in various families, but no higher.
	
	The family of real Dirichlet characters was treated by Conrey and Rodgers \cite{CoRo}. They consider the averaged Dirichlet polynomial of the form
	\begin{equation}\label{key}
		\sumstar_{d\geq 1}f(d/X)\sum_{n_1,\dots,n_k\geq 1} \frac{\chi_{8d}(n_1\dots n_k)}{n_1^{s_1}\dots n_k^{s_k}}W\bfrac{n_1\dots n_k}{N},
	\end{equation} where $W$ is a smooth, compactly supported test function, $N=X^\eta$, and $\re(s_j)\approx 1/2$. If  $\eta<1$, only the diagonal terms contribute in the asymptotic formulas, while for $1\leq \eta<2$, Conrey and Rodgers obtain an asymptotic formula with the diagonal and 1-swap terms. In general, it is conjectured that the $\l$-swap terms should appear for $\eta\geq\l$ (and this is more-or-less equivalent to Conjecture \ref{con: cfkrs}). We remark that Conrey and Rodgers consider the more general moments twisted by some fixed real Dirichlet characters and obtain a result that is uniform in the conductor of the twisting character. It is straightforward to modify our multiple Dirichlet series to contain the twist and obtain an analogue of Theorem \ref{thm: main theorem}, but obtaining a dependence of the error term on the twisting character causes some difficulties similar to determining the uniformity in the imaginary part of the $s$-variables (see also Remark \ref{remark: possible uniformity}).
	
	The following corollary is analogous to \cite[Theorem 1]{CoRo} (with $\l=1$ and similar error term).
	\begin{corollary}\label{cor: long Dirichlet polynomials}
		Assume that $k\leq 4$ or that GLH holds. Let $a$ be a small positive constant (say $a=1/10$), and assume that $0<\re(s_j)-1/2\ll 1/\log X.$ Then we have
		\begin{equation}\label{key}
			\begin{aligned}
				&\sumstar_{d\geq 1}f(d/X)\sum_{n_1,\dots,n_k\geq 1} \frac{\chi_{8d}(n_1\dots n_k)}{n_1^{s_1}\dots n_k^{s_k}}W\bfrac{n_1\dots n_k}{N}
				=\frac{1}{2\pi i}\int_{(a)}\tilde W(s)N^s\\
				&\times\sum_{\substack{J\subset\{1,\dots,k\}\\|J|\leq 1}}X^{1+\frac{|J|}{2}-\sum\limits_{j\in J}(s_j+s)}\tilde f\lz1+\frac{|J|}{2}-\sum_{j\in J}(s_j+s)\pz T_s(S\setminus S_J\cup S_J^-)ds\\
				&+O\lz N^{1/4} X^{1/2+\epsilon}\pz,
			\end{aligned}
		\end{equation}where
		$T_s(S):=T(\{s_j+s:\ s_j\in S\}).$
	\end{corollary}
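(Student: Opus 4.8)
The plan is to reduce the statement to an application of Perron's formula combined with Theorem \ref{thm: main theorem}, following the same scheme as Corollaries \ref{cor: first three moments} and \ref{cor: moments to the right of critical line}. First I would write $W(n_1\dots n_k/N)=\frac{1}{2\pi i}\int_{(a)}\tilde W(s)(N/(n_1\dots n_k))^s\,ds$ by Mellin inversion, so that the inner sum over $n_1,\dots,n_k$ becomes $\prod_{j=1}^k L(s_j+s,\chi_{8d})$ up to absolute convergence issues that are harmless since $\re(s_j+s)>1$ on the line $\re(s)=a=1/10$. Interchanging the $s$-integral with the sum over $d$ (justified by the compact support of $f$ and the rapid decay of $\tilde W$), the left-hand side becomes
\begin{equation}\label{eqn: cor4 after mellin}
	\frac{1}{2\pi i}\int_{(a)}\tilde W(s)N^s\sumstar_{d\geq 1}f(d/X)\prod_{j=1}^k L(s_j+s,\chi_{8d})\,ds.
\end{equation}
The inner average is exactly the moment \eqref{eqn: the studied moment} with shifted arguments $s_j+s$, and by \eqref{eqn: moment as integral}–\eqref{eqn: A definition} it equals $\frac{1}{2\pi i}\int_{(c)}A(s_1+s,\dots,s_k+s,w)\tilde f(w)X^w\,dw$ for $c$ large.

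Next I would shift the $w$-contour to the left, using the meromorphic continuation and polynomial boundedness supplied by Theorem \ref{thm: main theorem}. On the line $\re(s)=a$ we have $\re(s_j+s)$ just above $1/2+1/10$, so by Remark \ref{remark: reflections of hyperplanes}(2) the half-spaces \eqref{eqn: higher level type 0 hyperplanes}, \eqref{eqn: higher level type t hyperplanes} with $\ell\geq 1$ impose no obstruction, and the binding constraints are $\re(w)>1/2$ together with $\re(s_j+s+w)>1$. These allow us to push the contour to $\re(w)=1/2+\epsilon$, crossing precisely the poles at $w=1$ (residue giving the $J=\emptyset$ diagonal term) and at $w=1+\tfrac12-(s_j+s)$ for each $j$ (residues giving the $k$ one-swap terms), by \eqref{eqn: main theorem residues} and the definition of $T$. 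Collecting these residues, substituting back into \eqref{eqn: cor4 after mellin}, and recognizing $T(S\setminus S_J\cup S_J^-)$ with arguments $s_j+s$ as $T_s(S\setminus S_J\cup S_J^-)$, reproduces the main term displayed in the corollary, the sum being over $J$ with $|J|\leq 1$.

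It remains to bound the shifted integral. On the line $\re(w)=1/2+\epsilon$ the integrand is $A(s_1+s,\dots,s_k+s,w)\tilde f(w)X^{1/2+\epsilon}$, and by \eqref{eqn: polynomial boundedness def in theorem} the factor $A$ is polynomially bounded in the imaginary parts while $\tilde f(w)$ decays faster than any polynomial; hence the $w$-integral is $O(X^{1/2+\epsilon})$ uniformly for $s$ on $\re(s)=a$ (with at most a polynomial dependence on $|\im s|$, absorbed against the decay of $\tilde W$). Multiplying by $|\tilde W(s)N^s|\ll N^a=N^{1/10}$ and integrating over $s$ gives a contribution $O(N^{1/10}X^{1/2+\epsilon})$, which is subsumed in the claimed error $O(N^{1/4}X^{1/2+\epsilon})$. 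The main obstacle is the bookkeeping of the pole structure after the shift: one must check that no pole other than $w=1$ and $w=1+\tfrac12-(s_j+s)$ is crossed in the strip $1/2+\epsilon<\re(w)\leq c$ for arguments in the relevant range, and that the two-swap and higher poles $w=1+\tfrac{|J|}{2}-\sum_{j\in J}(s_j+s)$ with $|J|\geq 2$ lie strictly to the left of $\re(w)=1/2+\epsilon$ — this uses $\re(s_j)$ close to $1/2$ and $\re(s)=a>0$, so that $\re(1+\tfrac{|J|}{2}-\sum_{j\in J}(s_j+s))\approx 1-|J|a<1/2$ once $|J|\geq 2$ and $a$ is chosen so that $1-2a<1/2$, i.e.\ $a>1/4$; since the statement takes $a=1/10<1/4$ one instead argues that these poles, although possibly to the right of $1/2+\epsilon$, are cancelled in the full sum over $J$ exactly as in the analyticity remark after Conjecture \ref{con: cfkrs}, or one shifts only to a line to the right of them and estimates trivially. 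Making this cancellation-versus-crossing dichotomy precise, together with the uniformity in $s$, is the one genuinely delicate point; everything else is a routine contour shift.
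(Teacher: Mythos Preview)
Your overall scheme---Mellin inversion in the $n$-sum, then Perron in $d$, then a leftward shift of the $w$-contour---is exactly what the paper does. There is, however, a genuine gap, which you in fact flag at the end without resolving.

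Keeping the $s$-contour on $\re(s)=a=1/10$ does not work. With $\re(s_j+s)\approx 3/5$, the level-$0$ constraint $\re\bigl(\sum_{t\leq 4}(s_{j_t}+s)+2w\bigr)>4$ from \eqref{eqn: hyperplanes for A} only permits $\re(w)>4/5$, so you cannot reach $\re(w)=1/2+\epsilon$ while staying inside the region of Theorem~\ref{thm: main theorem}; your claimed binding constraint $\re(s_j+s+w)>1$ is not the relevant one. And even were the region not an obstacle, the $|J|$-swap poles sit at $\re(w)\approx 1-|J|/10$, so all of them with $|J|\leq 4$ would be crossed on the way to $1/2+\epsilon$. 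Your two proposed escapes both fail: the analyticity remark after Conjecture~\ref{con: cfkrs} concerns cancellation in the \emph{full} sum over $J$, not the truncated sum $|J|\leq 1$ appearing here; and stopping the $w$-shift just right of the $2$-swap poles gives an error of order $N^{1/10}X^{4/5+\epsilon}$, far worse than claimed.

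The paper's fix is to shift the $s$-contour to $\re(s)=1/4$ \emph{before} moving $w$. Then $\re(s_j+s)>3/4$, every level-$0$ inequality in \eqref{eqn: hyperplanes for A} collapses to $\re(w)>1/2$, and the $|J|$-swap poles lie at $\re(w)=1-|J|/4+O(1/\log X)<1/2$ for $|J|\geq 2$. One now shifts $w$ to $1/2+\epsilon$, picks up precisely the $|J|\leq 1$ residues, and bounds the remaining double integral by $O(N^{1/4}X^{1/2+\epsilon})$---the exponent $1/4$ on $N$ is the trace of this $s$-shift, not slack. (A minor aside: your assertion that $\re(s_j+s)>1$ on $\re(s)=1/10$ is false; one simply starts both contours far to the right and shifts.)
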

	
	All of the corollaries are proved by applying Perron's formula and shifting the resulting integral, while the region in Theorem \ref{thm: main theorem} determines how far the integral can be shifted. To explain why Theorem \ref{thm: main theorem} allows us to detect the 1-swap terms but not higher swaps in various settings, note that to detect the 2-swap terms of size $X^{2-s_{j_1}-s_{j_2}}$, one needs to take $\re(w)<\re(2-s_{j_1}-s_{j_2})$. However, for any quadruple of distinct variables $s_{1},\dots,s_{4}$, the last condition in \eqref{eqn: hyperplanes for A} implies that either $\re(w+s_1+s_2)>2$, or $\re(w+s_3+s_4)>2.$
	
	Similarly, had we not been able to use the functional equation in $w$ (the analogue of Soundararajan's Poisson summation), we would use the region $R_1$ \eqref{eqn:definition of R_1} in place of $R$ in \eqref{eqn: S definition} and the result would contain conditions $\re(s_{j_1}+s_{j_2}+2w)>3$, so we wouldn't be able to detect the 1-swap terms of size $X^{3/2-s_j}$ (or the second and higher moments).
	
	\smallskip
	
	Analogous results with similar proof would also apply to other families of quadratic twists of a fixed $L$-function. The method could also be extended to treat the multiple Dirichlet series associated with mollified moments or the ratios conjectures.
	
	\section*{\large\textbf{Acknowledgements}}
	I would like to thank Brian Conrey, Andrew Granville and Kaisa Matom\"aki for their encouragement and Siegfred Baluyot for several useful discussions on these topics. I am also grateful to the anonymous referee for a careful reading of the manuscript and many interesting comments and suggestions.
	
	The author was supported by Academy of Finland Grant no. 333707 and by the Charles University
	Primus Programmes PRIMUS/24/SCI/010 and PRIMUS/25/SCI/017.

	\section{Preliminaries}
	\subsection{Dirichlet characters and $L$-functions}
	We will work with the family of real Dirichlet characters $\chi_{8d}$ for positive odd square-free integers $d$, given by the Kronecker's symbol $\chi_{8d}=\bfrac{8d}{\cdot}$, but all results can be easily extended to the family of all real primitive characters parametrized by fundamental discriminants. The Kronecker symbol $\chi_{8d}$ is a Dirichlet character modulo $8d$ for any $d\geq 1$.
	We denote by $\psi_1$ the principal character modulo $4$.
	
	For every positive, odd, square-free $d$, $\chi_{8d}$ is a primitive even quadratic Dirichlet character of conductor $8d$, and its $L$-function satisfies the functional equation
	\begin{equation}\label{eqn: fe primitive}
		L(s,\chi_{8d})=\bfrac{8d}{\pi}^{1/2-s}\frac{\Gamma\bfrac{1-s}{2}}{\Gamma\bfrac{s}{2}}L(1-s,\chi_d)=d^{1/2-s}X(s)L(1-s,\chi_d).
	\end{equation}
	
	The generalized Lindel\"of hypothesis is the estimate
	\begin{equation}\label{eqn: GLH for individual L-function}
		|L(s,\chi_{8d})|\ll |d(1+|s|)|^{\max\{0,1/2-\re(s)\}+\epsilon},
	\end{equation} and this bound would follow from a bound for all moments of the form
	\begin{equation}\label{eqn: moments conjecture implies Lindelof}
		\sumstar_{d\leq X}|L(s,\chi_{8d})|^k\ll_k |s|^\epsilon X^{1+\epsilon}
	\end{equation} for each $s\in\C$ with $\re(s)\geq 1/2$ and $k\in\N$. Such a bound is far beyond reach of current technology for large $k$. For $k\leq 4$, we have the following bound due to Heath-Brown \cite[Theorem 2]{HB}, which can serve as a substitute: for any $s$ with $\re(s)\geq 1/2$
	\begin{equation}\label{eqn: Heath-Brown fourth moment}
		\sumstar_{d\leq X}|L(s,\chi_{8d})|^4\ll X^{1+\epsilon}(|s|)^{1+\epsilon}.
	\end{equation}
	
	We will also use the Jacobi symbols $\leg{\cdot}{n}$ for odd $n$. These are Dirichlet characters modulo $n$, which are primitive if and only if $n$ is square-free, and $\leg{\cdot}{n}$ is even for $n\equiv 1\mod 4$ and odd for $n\equiv 3\mod 4$. Writing $n=n_0n_1^2$ with $n_0$ square-free, we have
	\begin{equation}\label{eqn: non-primitive in terms of primitive L-function}
		L\lz s,\leg{\cdot}{n}\pz=L\lz s,\leg{\cdot}{n_0}\pz \prod_{p|n_1}\lz 1-\frac{\leg{p}{n_0}}{p^s}\pz.
	\end{equation} It follows that for $\re(s)>0$, we have the bound
	\begin{equation}\label{eqn: bound non-primitive in terms of primitive L-function}
		\lab L\lz s,\leg{\cdot}{n}\pz\rab \ll n^{\epsilon}\lab L\lz s,\leg{\cdot}{n_0}\pz \rab.
	\end{equation}
	From \eqref{eqn: Heath-Brown fourth moment}, H\"older's inequality, \eqref{eqn: bound non-primitive in terms of primitive L-function} and the functional equation \eqref{eqn: fe primitive}, we obtain
	\begin{equation}\label{eqn: first moment over non-primitive characters}
		\sum_{\substack{n\leq X\\n\odd}} \lab L\lz s,\leg{\cdot}{n}\pz\rab\ll_s X^{\max\{1,3/2-\re(s)\}+\epsilon},\quad \text{for}\quad \re(s)\geq 0,
	\end{equation} where the implied constant depends polynomially on $|s|$.
	
	\subsection{Gauss sums}
	
	Let $\chi$ be a Dirichlet character modulo $q$. The Gauss sum is defined by
	\begin{equation}\label{key}
		\tau(\chi,\l):=\sum_{j\mod q}\chi(j)e\bfrac{j\l}{q}.
	\end{equation}
	A crucial tool for us will be a functional equation valid for all Dirichlet $L$-functions, including those associated with non-primitive characters. The following is \cite[Proposition 2.3]{Cec1} (with a slightly different normalization).
	\begin{proposition}\label{prop: general functional equation}
		Let $\chi$ be any character modulo $n$. Then we have
		\begin{equation}\label{eqn: fe for all L-functions}
			L(s,\chi)=n^{1/2-s}X_{\pm}(s)K(1-s,\chi),
		\end{equation} where
		\begin{equation}\label{key}
			\begin{aligned}
				K(s,\chi)&=\sum_{\l=1}^\infty\frac{\tau(\chi,\l)/\sqrt n}{\l^s},\\
				X_{\pm}(s)&=\begin{cases}
					X_+(s):=\pi^{s-1/2}\frac{\Gamma\bfrac{1-s}{2}}{\Gamma\bfrac s2},&\hbox{if $\chi$ is even,}\\
					X_-(s):=-i\pi^{s-1/2}\frac{\Gamma\bfrac{2-s}{2}}{\Gamma\bfrac {1+s}2},&\hbox{if $\chi$ is odd.}
				\end{cases}
			\end{aligned}
		\end{equation}
	\end{proposition}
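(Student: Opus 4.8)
\textbf{Proof proposal for Proposition \ref{prop: general functional equation}.}

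The plan is to derive \eqref{eqn: fe for all L-functions} from the classical completed functional equation for primitive Dirichlet $L$-functions, by writing $\chi$ modulo $n$ in terms of the primitive character $\chi^*$ modulo $f=\cond(\chi)$ that induces it, and tracking how both sides transform. First I would write $\chi = \chi^* \psi_0$, where $\psi_0$ is the principal character modulo $n$, so that $L(s,\chi) = L(s,\chi^*)\prod_{p \mid n}\bigl(1 - \chi^*(p)p^{-s}\bigr)$. On the $L(s,\chi^*)$ factor I would invoke the standard functional equation: with $\chi^*$ even, $\pi^{-s/2}\Gamma(s/2)f^{s/2}L(s,\chi^*) = \frac{\tau(\chi^*)}{\sqrt{f}}\,\pi^{-(1-s)/2}\Gamma((1-s)/2)f^{(1-s)/2}L(1-s,\overline{\chi^*})$, and the analogous odd version with an extra $-i$ and shifted gamma factors. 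Rearranging gives $L(s,\chi^*) = f^{1/2-s}X_{\pm}(s)\,\frac{\tau(\chi^*)}{\sqrt f}\,L(1-s,\overline{\chi^*})$ with $X_{\pm}$ exactly the ratio of gamma factors in the statement. Since $\chi$ is real-valued in the applications of interest but the proposition is stated for general $\chi$, I would keep $\overline{\chi^*}$ throughout.

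The crux is then to identify $n^{1/2-s}K(1-s,\chi)$, where $K(s,\chi)=\sum_{\ell\ge1}\tau(\chi,\ell)n^{-1/2}\ell^{-s}$, with $f^{1/2-s}\,\frac{\tau(\chi^*)}{\sqrt f}\,L(1-s,\overline{\chi^*})\prod_{p\mid n}(1-\chi^*(p)p^{-s})$. The key step is a clean evaluation of the generalized Gauss sum $\tau(\chi,\ell)$ for imprimitive $\chi$: writing $d=\gcd(\ell,n)$, one has the classical identity $\tau(\chi,\ell) = \overline{\chi^*}(\ell/d)\,\chi^*(n/(df))\,\mu(n/(df))\,\varphi(n)/\varphi(n/d)\cdot(\text{something})$ — more precisely I would use the standard formula expressing $\tau(\chi,\ell)$ in terms of $\tau(\chi^*)$, which is nonzero only when $n/d$ is divisible by $f$ and $(n/d)/f$ is squarefree and coprime to $f$. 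Substituting this into the Dirichlet series $K(1-s,\chi)$ and carrying out the sum over $\ell$ (splitting $\ell = d m$ with $m$ coprime to $n/d$, then summing over the admissible $d$) should collapse, after reindexing, to $\tau(\chi^*)f^{-1/2}L(1-s,\overline{\chi^*})$ times precisely the finite Euler factor $\prod_{p\mid n}(1-\chi^*(p)p^{-s})$ and the correct power of $n$ versus $f$. Matching the gamma factors is immediate since $X_\pm(s)$ depends only on the parity of $\chi$, which equals the parity of $\chi^*$.

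I expect the main obstacle to be the bookkeeping in the generalized Gauss sum evaluation and the subsequent Dirichlet series manipulation: one must be careful about the support conditions (which $d$ contribute), about the Möbius factor that appears, and about not losing or gaining a factor of a prime power when passing between modulus $n$ and conductor $f$. A safe way to organize this is multiplicatively: both $K(1-s,\chi)$ and the claimed right-hand side are, up to the single global factor $\tau(\chi^*)/\sqrt f \cdot L(1-s,\overline{\chi^*})$, multiplicative in the prime-power decomposition of $n$ (relative to $f$), so it suffices to verify the identity one prime at a time — comparing, for each prime $p$, the local contribution to $\sum_\ell \tau(\chi,\ell)\ell^{-(1-s)}$ against $(1-\chi^*(p)p^{-s})$ times the Euler factor of $L(1-s,\overline{\chi^*})$ at $p$. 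This reduces everything to a finite geometric-series computation at each prime, which is routine. Finally, analytic continuation of both sides to all $s\in\C$ and the meromorphy statement follow from the corresponding properties of $L(s,\chi^*)$, since $K(s,\chi)$ is visibly a finite combination of shifted copies of $L(s,\overline{\chi^*})$.
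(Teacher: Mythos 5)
Your proposal is correct in outline, but it is not the route the paper takes: the paper does not prove Proposition \ref{prop: general functional equation} at all, it simply quotes \cite[Proposition 2.3]{Cec1}, where the functional equation is established directly for the imprimitive character, essentially by writing $L(s,\chi)=n^{-s}\sum_{a \bmod n}\chi(a)\zeta(s,a/n)$ and invoking Hurwitz's functional equation (a Poisson-summation-type argument valid for any periodic coefficient sequence), which produces the series $K(1-s,\chi)=\sum_{\ell\geq1}\tau(\chi,\ell)n^{-1/2}\ell^{-(1-s)}$ in one stroke, with the parity of $\chi$ entering only through $X_{\pm}$. Your alternative — reduce to the primitive character $\chi^*$ modulo $f=\cond(\chi)$, use the classical completed functional equation (your rearranged form, including the $-i$ in $X_-$, is exactly right), and then match $n^{1/2-s}K(1-s,\chi)$ against $f^{1/2-s}\frac{\tau(\chi^*)}{\sqrt f}L(1-s,\overline{\chi^*})\prod_{p\mid n}\lz1-\chi^*(p)p^{-s}\pz$ — does go through: with the standard evaluation $\tau(\chi,\ell)=\tau(\chi^*)\sum_{d\mid \gcd(\ell,\,n/f)}d\,\mu\lz \tfrac{n}{df}\pz\chi^*\lz \tfrac{n}{df}\pz\overline{\chi^*}\lz \tfrac{\ell}{d}\pz$ (your $\varphi(n)/\varphi(n/d)$ version with $d=\gcd(\ell,n)$ is the equivalent H\"older-type form), the $\ell$-sum factors as $L(1-s,\overline{\chi^*})\sum_{e\mid n/f}\mu(e)\chi^*(e)e^{-s}$ after the substitution you indicate, and the resulting product $\prod_{p\mid n/f}\lz1-\chi^*(p)p^{-s}\pz$ agrees with the product over $p\mid n$ because $\chi^*(p)=0$ for $p\mid f$ — a small point worth making explicit, as is the fact that the two sides converge in disjoint half-planes so the identity is one of analytic continuations (which your final remark handles, since $K(s,\chi)$ is indeed a finite combination of shifted copies of $L(s,\overline{\chi^*})$). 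The trade-off is clear: the direct Hurwitz/Poisson argument avoids the generalized Gauss sum bookkeeping entirely and needs no reference to the conductor, while your reduction uses only the textbook primitive functional equation plus a standard Gauss sum formula and has the side benefit of exhibiting the meromorphic continuation and polynomial bounds for $K(s,\chi)$ explicitly in terms of $L(s,\overline{\chi^*})$.
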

	For a primitive character $\chi$, we have
	\begin{equation}\label{key}
		\tau(\chi,\l)=\overline\chi(\l)\tau(\chi,1),
	\end{equation} which can be used in Proposition \ref{prop: general functional equation} to recover the usual functional equation for $L(s,\chi)$.
	
	We remark that $\tau(\chi,\l)$ and $K(s,\chi)$ also implicitly depend on the modulus of $\chi$. They change if $\chi$ is lifted to a character with a larger modulus, but \eqref{eqn: fe for all L-functions} remains true.
	
	The functional equation \eqref{eqn: fe for all L-functions} implies that $K(s,\chi)$ has a meromorphic continuation to $s\in\C$, and it satisfies similar bounds in vertical strips as $L(s,\chi)$. In particular, \eqref{eqn: first moment over non-primitive characters} and \eqref{eqn: fe for all L-functions}, we have
	\begin{equation}\label{eqn: first moment K}
		\sum_{\substack{n\leq X \\ n\odd}} \lab K\lz s,\leg{\cdot}{n}\pz\rab \ll_s X^{\max\{1, 3/2-\re(s)\}+\epsilon}, \quad\text{for} \quad \re(s)\geq 0.
	\end{equation}
	
	We will also work with the modified Gauss sums, defined for the Jacobi symbol $\leg{\cdot}{n}$ by
	\begin{equation}\label{eqn: multiplicative Gauss sums}
		\begin{aligned}
			G\lz\leg{\cdot}{n},\l\pz&=\lz\frac{1-i}{2}+\leg{-1}{n}\frac{1+i}{2}\pz\tau\lz\leg{\cdot}{n},\l\pz\\
			&=\begin{cases}
				\tau\lz\bfrac{\cdot}{n},\l\pz,&\hbox{if $n\equiv 1\mod 4$,}\\
				-i\tau\lz\bfrac{\cdot}{n},\l\pz,&\hbox{if $n\equiv 3\mod 4$,}
			\end{cases}
		\end{aligned}
	\end{equation} which are multiplicative in the $n$-variable: for $(m,n)=1$, we have
	\begin{equation}\label{key}
		G\lz\leg{\cdot}{m},\l\pz G\lz\leg{\cdot}{n},\l\pz=G\lz\leg{\cdot}{mn},\l\pz.
	\end{equation}
	Moreover, if $p$ is an odd prime and $p^a||\l$, we have the following explicit evaluation due to Soundararajan \cite[Lemma 2.3]{Sou}:
	\begin{equation}\label{eqn: Gauss sum evaluation}
		G\lz\leg{\cdot}{p^k},\l\pz=\begin{cases}
			\phi(p^k),&\hbox{if $k\leq a$, $k$ even,}\\		0,&\hbox{if $k\leq a$, $k$ odd,}\\
			-p^a,&\hbox{if $k=a+1$, $k$ even,}\\
			\leg{\l p^{-a}}{p}p^a\sqrt p,&\hbox{if $k=a+1$, $k$ odd,}\\
			0,&\hbox{if $k\geq a+2$.}\\
		\end{cases}
	\end{equation}
	
	\subsection{Multivariable complex analysis}
	A general reference for multivariable complex analysis is \cite{Hor}.

	An open set $T\subset\C^n$ is a tube domain if there is a connected open set $U\subset\R^n$ such that $T=U+i\R^n=\{z\in\C^n:\ \re(z)\in U\}.$ We call $U$ the base set of $T$.
	Tube domains are generalizations of vertical strips and are natural domains of definition of a multiple Dirichlet series.
	
	We denote the convex hull of $T$ by $\ch (T)$.
	The following theorem is immensely useful in obtaining meromorphic continuation of multiple Dirichlet series.
	\begin{theorem}[Bochner's Tube Theorem]
		Any function holomorphic on a tube domain $T$ has a holomorphic extension to the tube domain $\ch(T)$.	
	\end{theorem}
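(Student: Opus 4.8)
The statement is classical (due to Bochner), and the plan is to carry out the standard argument: reduce the theorem to one local continuation lemma of Hartogs type, and then prove that lemma by a Cauchy-integral / Hartogs-figure construction. Write $T=U+i\R^n$ with $U\subset\R^n$ open and connected, so that $\ch(T)=\ch(U)+i\R^n$, and let $f$ be holomorphic on $T$. I will use freely that every real-affine bijection of $\R^n$ complexifies to a biholomorphism of $\C^n$ carrying tube domains to tube domains, and that the imaginary translations $z\mapsto z+iy$, $y\in\R^n$, fix $T$. First I would build $\ch(U)$ from $U$ in finitely many stages: put $C_0=U$ and let $C_{m+1}$ be the union of all closed segments with endpoints in $C_m$; each $C_m$ is open and connected ($C_0=U$ by hypothesis, and $C_{m+1}$ is a union of the sets $(1-t)C_m+tC_m$, $t\in(0,1)$, each of which is open, connected, and contains $C_m$), and Carath\'eodory's theorem gives $\ch(U)=C_n$. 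Thus it suffices to establish the \emph{segment step}: if $g$ is holomorphic on $T_V:=V+i\R^n$ for $V$ open and connected and $a,b\in V$, then $g$ extends holomorphically to $T_{V'}$ for some open $V'\supset V\cup[a,b]$. Granting this, one applies it repeatedly --- one segment at a time, each new piece built by analytic continuation from the domain constructed so far, so that consistency is automatic --- to promote $g$ from $T_{C_m}$ to $T_{C_{m+1}}$, and after $n$ stages one reaches $T_{\ch(U)}$.

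To prove the segment step I would reduce it twice more. First, it is enough to extend $g$ holomorphically to a neighbourhood in $\C^n$ of each point $x_0$ of the open segment from $a$ to $b$: applying that to the translates $z\mapsto g(z+iy)$ (again holomorphic on $T_V$) extends $g$ near every point of the fibre $\{\re z=x_0\}$, and gluing over $x_0$ --- using the identity theorem for consistency --- produces the extension to a genuine tube. Second, since $V$ is connected I would join $a$ to $b$ by a polygonal path $a=p_0,p_1,\dots,p_\l=b$ in $V$ and fill the segments $[p_0,p_2],[p_0,p_3],\dots,[p_0,p_\l]$ successively, each obtained from the previous one together with the next edge of the path; this reduces everything to the case $\l=2$, a \emph{triangle lemma}: if $g$ is holomorphic on the tube over a connected open neighbourhood of two sides $[p_0,p_1]\cup[p_1,p_2]$ of a triangle, then $g$ extends past the third side $[p_0,p_2]$ (degenerate triangles being trivial). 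After a real-affine normalisation placing $p_0,p_1,p_2$ in the $(x_1,x_2)$-coordinate plane, this is essentially a statement about the first two complex variables, with $z_3,\dots,z_n$ riding along as holomorphic parameters.

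The triangle lemma is the analytic heart, and this is exactly where the hypotheses $n\ge2$ and the connectedness of $U$ are genuinely needed --- for a disconnected neighbourhood of two sides it is false, as a locally constant example shows. The mechanism is Hartogs-type continuation. Near the shared vertex $p_1$ the two given pieces of the domain overlap, so when the second real coordinate is confined to a short interval about the height of $p_1$, the one-dimensional slice in $z_1$ is a single connected vertical strip; fixing a small contour $\Gamma$ around $p_1$ inside that strip, the Cauchy integral $\frac1{2\pi i}\int_\Gamma\frac{g(\zeta,z_2,\dots,z_n)}{\zeta-z_1}\,d\zeta$ defines a holomorphic function on a bidisc-type region that agrees with $g$ where both are defined and protrudes a definite distance into the interior of the triangle. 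Feeding the enlarged domain back into the same construction --- an ``inflation'' one must check eventually fills the whole open triangle --- or, alternatively, exhibiting a genuine Hartogs figure whose polydisc hull contains a neighbourhood of an interior point of $[p_0,p_2]$, then yields the extension across the third side. Since $\Gamma$ and the Hartogs figures involved are bounded and $g$ is bounded on compact subsets of a tube, no growth hypothesis on $g$ in the imaginary directions is needed anywhere.

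The step I expect to be the main obstacle is precisely this triangle lemma: organising the continuation --- the moving-contour Cauchy integral together with its iteration, or the adapted Hartogs figure --- so that it is single-valued and really does sweep out the whole convexified region. By comparison, the reduction to it through segment closures and polygonal paths, and the gluing of the local pieces, are routine once the identity theorem and the tube symmetry are in hand. One can alternatively deduce the triangle lemma, and hence the theorem, from the Fourier--Laplace transform once the extension is known to be of moderate growth, but the route sketched above needs no such a priori estimate.
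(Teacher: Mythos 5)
The paper does not prove this statement at all: Bochner's Tube Theorem is quoted as a classical result, with H\"ormander's book cited as the general reference, so there is no in-paper argument to compare against. Your outline follows the standard classical route (iterated segment closures of the base set, reduction to a polygonal path, then a Hartogs-type continuation across the third side of a triangle), and the combinatorial reductions are essentially sound: each $C_m$ is open and connected, finitely many stages suffice by Carath\'eodory, and real-affine normalisation plus treating $z_3,\dots,z_n$ as parameters is legitimate.

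The gap is that the analytic heart --- your ``triangle lemma'' --- is described but not proved. You yourself flag the decisive verification (that the moving-contour Cauchy integral, iterated or packaged as a Hartogs figure, is single-valued and really sweeps out the whole triangle) as the main obstacle, and without it the argument is a plan rather than a proof. Two of the gluing claims are also asserted rather than argued. First, when you glue the local extensions at the points $x_0+iy$ of a fibre, the identity theorem only yields consistency on overlaps meeting a common domain of definition; two small neighbourhoods of nearby fibre points need not intersect the original tube $T_V$ at all, so consistency must come from the construction itself (for instance from one explicit contour-integral formula, which commutes with the imaginary translations), not from the identity theorem alone. Second, promoting $g$ from the tube over $C_m$ to the tube over $C_{m+1}$ ``one segment at a time'' involves uncountably many segments, so ``consistency is automatic'' is precisely what needs an argument: one either proves the segment step in the stronger form of an extension, given by a formula, to a tube over a full neighbourhood of the triangle, or establishes continuation along all paths and invokes monodromy in the convex (hence simply connected) hull tube, or works through a countable exhaustion. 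All of this is repairable by the standard devices, but as written these points are genuine gaps rather than routine details.
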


	The following is \cite[Proposition C.5]{Cec1}
	\begin{proposition}\label{prop: extending inequalities}
		Assume that $T\subset \C^n$ is a tube domain, $g,h:T\rightarrow \C$ are holomorphic functions, and let $\tilde g,\tilde h$ be their holomorphic continuation to $\ch(T)$. If  $\lvert g(z)\rvert \leq \lvert h(z)\rvert $ for all $z\in T$, and $h(z)$ is nonzero in $T$, then also $\lvert \tilde g(z)\rvert \leq \lvert \tilde h(z)\rvert $ for all $z\in \ch( T)$.
	\end{proposition}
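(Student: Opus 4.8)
The plan is to reduce the statement to a norm-preserving refinement of Bochner's Tube Theorem. First I would set $F:=g/h$. Since $h$ is holomorphic and non-vanishing on $T$, the quotient $F$ is holomorphic on $T$, and the hypothesis $|g|\le|h|$ on $T$ says precisely that $|F|\le 1$ on $T$. Note that $\ch(T)$ is again an open connected tube (the convex hull of a connected open set is convex, hence connected, and is open), so once we know that $F$ extends holomorphically to some $\tilde F$ on $\ch(T)$, the identity $g=Fh$ on $T$ together with the uniqueness of holomorphic continuation forces $\tilde g=\tilde F\,\tilde h$ on all of $\ch(T)$; consequently $|\tilde g|=|\tilde F|\,|\tilde h|\le|\tilde h|$ on $\ch(T)$ as soon as $|\tilde F|\le 1$ there (in particular this automatically handles points where $\tilde h$ vanishes). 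So the whole content is the following strengthening of the Tube Theorem: \emph{a holomorphic function on a tube domain that is bounded by $1$ extends to a holomorphic function on the convex hull of the tube that is still bounded by $1$.}

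To prove this refinement, let $U$ be the base set of $T$, let $\tilde F$ be the extension given by Bochner's Tube Theorem to $\ch(T)=\ch(U)+i\R^n$, and consider
\[
\Lambda(x):=\sup_{y\in\R^n}\log\bigl|\tilde F(x+iy)\bigr|,\qquad x\in\ch(U),
\]
with values in $[-\infty,+\infty]$. The key point is that $\Lambda$ is convex on $\ch(U)$: the function $\log|\tilde F|$ is plurisubharmonic on $\ch(T)$, so its restriction to any complex line is subharmonic on the corresponding planar region, and for a subharmonic function on a vertical strip the supremum over the imaginary coordinate is a convex function of the real coordinate (a Hadamard three-lines / Hardy convexity statement). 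Feeding this through all real directions inside the convex open set $\ch(U)$ yields convexity of $\Lambda$. Since $|\tilde F|=|F|\le 1$ on the sub-tube $T$, we have $\Lambda\le 0$ on $U$; writing an arbitrary $x_0\in\ch(U)$ via Carathéodory as a convex combination $x_0=\sum_{j=0}^n\lambda_j a_j$ of points $a_j\in U$ and applying Jensen's inequality to the convex function $\Lambda$ gives $\Lambda(x_0)\le\sum_j\lambda_j\Lambda(a_j)\le 0$. Hence $|\tilde F|\le 1$ on $\ch(T)$, which completes the proof.

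The step I expect to be delicate is the convexity of $\Lambda$ in the presence of possibly infinite values: a priori we only know $\Lambda\le 0$ on $U$, not that $\Lambda$ is finite throughout $\ch(U)$, while the usual three-lines theorem presupposes a function bounded on the strip. I would handle this either by a Phragmén--Lindel\"of argument on the complex-line slices, using that the Bochner extension $\tilde F$ is in any case locally bounded on $\ch(T)$ and agrees on the dense sub-tube $T$ with a function bounded by $1$, or simply by quoting the sup-norm-preserving form of Bochner's Tube Theorem from \cite{Hor}, which is established by exactly such an argument. A route that sidesteps the infinite-value issue entirely is an induction on the dimension of the faces of the Carathéodory simplex $\ch\{a_0,\dots,a_n\}$: the bound $|\tilde F|\le 1$ holds on the tube over a neighbourhood of each vertex $a_j$ because $U$ is open, and one propagates it from the $k$-faces to the $(k+1)$-faces by applying the three-lines theorem along a single complex line joining a vertex to a point of the opposite face, the required a priori bound now being furnished by the inductive hypothesis together with local boundedness of $\tilde F$.
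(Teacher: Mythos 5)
Your opening reduction is correct and is surely the intended one (note the paper does not prove this proposition itself; it quotes it from \cite[Proposition C.5]{Cec1}): with $F=g/h$ holomorphic on $T$ and $|F|\le 1$ there, Bochner's theorem extends $F$ to $\ch(T)=\ch(U)+i\R^n$, uniqueness of continuation on the connected open set $\ch(T)$ gives $\tilde g=\tilde F\,\tilde h$, and the proposition follows once one knows the sup-norm--preserving refinement: a function holomorphic on a tube and bounded by $1$ has its Bochner extension bounded by $1$ on the tube over the convex hull. This also correctly disposes of possible zeros of $\tilde h$ in $\ch(T)\setminus T$.

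The gap is in your proof of that refinement. A three-lines or Phragm\'en--Lindel\"of argument on a slice $\phi(\lambda)=\tilde F\bigl(a+\lambda(b-a)+iy_0\bigr)$ needs an a priori growth restriction on the whole closed strip $0\le\re(\lambda)\le1$, not merely bounds on its two boundary lines: the function $\exp(\exp(\pi\lambda))$ has modulus $1$ on both lines $\im(\lambda)=\pm\tfrac12$ yet is unbounded in the interior of that strip, so ``holomorphic near the closed strip and bounded by $1$ on the edges'' proves nothing by itself. Local boundedness of $\tilde F$ is a statement about compact sets and gives no control as $|\im|\to\infty$ along the (unbounded) strip; the inductive hypothesis in your face-by-face variant again only controls the two boundary lines of each strip, so it has exactly the same defect; and $T$ is in general not dense in $\ch(T)$, so density cannot be invoked either. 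Thus neither of your self-contained routes closes the issue you yourself flag, and your remaining option, ``simply quote the sup-norm-preserving form of Bochner's Tube Theorem from \cite{Hor}'', is not automatic: the tube theorem there (Theorem 2.5.10) is stated without any norm assertion, so you would have to verify that its proof tracks the bound or cite a source where the bounded version is stated explicitly. The refinement is true, but its proof must carry the bound along with the construction of the extension --- for instance via the maximum principle applied to the analytic discs or shifted contours used in proving the tube theorem, or via the general fact that extension to the envelope of holomorphy does not increase sup norms --- rather than being retrofitted by a slice-wise Phragm\'en--Lindel\"of argument.
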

	
	We will use this proposition to prove that the continuation of the multiple Dirichlet series is polynomially bounded in vertical strips. Indeed, note that if $Z(s_1,\dots,s_k,w)$ is polynomially bounded in vertical strips in the sense of \eqref{eqn: polynomial boundedness def in theorem} in two intersecting tube domains $T_1,T_2$, then Proposition \ref{prop: extending inequalities} implies similar polynomial boundedness in $\ch(T_1\cup T_2)$ (see also \cite[Proposition 2.8]{AlBu})
	
	\subsection{Convex polyhedra} A crucial part of our proof is the determination of the convex hull of a polyhedron in $\R^k$. We now introduce some basic theory that we will use, which can be found in most introductory textbooks in this topic (see for example \cite{Ale}, \cite{Bas}, \cite{HuWe}).
	
	A polyhedron is a set $P\subset\R^n$ of the form
	\begin{equation}\label{eqn: H-representation definition}
		P=\{x\in\R^{n}:Ax\geq b\},
	\end{equation} where $A\in\R^{m\times n}$ is an $m\times n$ real matrix, and $b\in\R^n$. This definition of a polyhedron as intersection of half-spaces is called the $H$-representation of $P$.
	Multiple Dirichlet series are naturally defined on tube domains whose base sets are (open) polyhedra. 
	
	In view of Bochner's Tube Theorem, we will need to find the convex hulls of polyhedra. To do this, a description of a polyhedron in terms of its rays and vertices is more convenient. By the Minkowski-Weyl Theorem, any polyhedron has a $V$-representation in the form
	\begin{equation}\label{eqn: V-representation definition}
		P=\ch(V_1,\dots,V_\l)+\nonneg(r_1,\dots,r_m),
	\end{equation} for some points $V_1,\dots,V_\l\in P$ and vectors $r_1,\dots,r_m\in\R^{n}$, where $\ch(V_1,\dots,V_\l)$ denotes the convex hull of the points $V_1,\dots,V_\l$, \begin{equation}\label{key}
		\nonneg(r_1,\dots,r_m)=\{\lambda_1 r_1+\dots+\lambda_m r_m:\ \lambda_1,\dots,\lambda_m\geq 0\}
	\end{equation} denotes the set of nonnegative combinations of $r_1,\dots,r_m$, and $+$ denotes the Minkowski sum
	$S_1+S_2=\{s_1+s_2:\ s_1\in S_1,\ s_2\in S_2\}.$
	In this representation, the points $V_1,\dots,V_\l$ are called vertices and the vectors $r_1,\dots,r_m$ are called rays. Such a representation is not unique, but it can be obtained by taking the set of extremal points for vertices and the set of extremal rays for rays. The extremal points and extremal rays can be determined from the $H$-representation using the following Proposition:
	\begin{proposition}\label{prop: extremal points and extremal rays definition}
		Let \begin{equation}\label{key}
			P=\{x\in\R^{n}:Ax\geq b\}. 
		\end{equation}  For $x\in P$, let $A_{\mathrm{tight}}(x;b)$ be the set of inequalities from the system $Ax\geq b$ for which there is equality.
		\begin{enumerate}
			\item  A point $x\in P$ is extremal if and only if $A_{\mathrm{tight}}(x;b)$ has rank $n$.
			\item The extremal rays of $P$ are the extremal rays of the polyhedron given by the homogeneous system
			\begin{equation}\label{key}
				P_{\mathrm{hom}}=\{x\in\R^{n}:Ax\geq 0\}.
			\end{equation} A vector $x\in P_{\mathrm{hom}}\setminus\{0\}$ is an extremal ray if and only if $A_{\mathrm{tight}}(x;0)$ has rank $n-1$.
		\end{enumerate}
	\end{proposition}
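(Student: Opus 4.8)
The plan is to prove both parts directly from the definitions, using only elementary linear algebra together with the basic theory of faces of polyhedra. For part~(1), recall that $x\in P$ is extremal when it admits no representation $x=\lambda y+(1-\lambda)z$ with $y,z\in P$, $y\neq z$, and $0<\lambda<1$. Write $A'$ for the submatrix of $A$ consisting of the rows tight at $x$, and $b'$ for the corresponding right-hand side, so that $A'x=b'$. If $\mathrm{rank}(A')=n$, then $x$ is the unique solution of $A'u=b'$; and in any convex decomposition of $x$ as above, every tight inequality is forced to be an equality for both $y$ and $z$ (a convex combination of two reals each $\geq b_i$ equals $b_i$ only if both equal $b_i$), so $A'y=A'z=b'$ and hence $y=z=x$. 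Conversely, if $\mathrm{rank}(A')<n$, pick a nonzero $v\in\ker A'$; since $A_i x>b_i$ strictly for each non-tight row, both $x+\epsilon v$ and $x-\epsilon v$ lie in $P$ for all small $\epsilon>0$, exhibiting $x$ as their midpoint. This part is routine.

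For part~(2), I would first note that when $P\neq\emptyset$ its recession cone is exactly $P_{\mathrm{hom}}$, so the rays of $P$ are the nonzero elements of $P_{\mathrm{hom}}$; since extremality of a ray depends only on this cone, the extremal rays of $P$ and of $C:=P_{\mathrm{hom}}$ coincide, and it suffices to study $C$. Given $0\neq v\in C$ with tight index set $I$, consider the smallest face $F(v):=\{x\in C:A_i x=0\text{ for all }i\in I\}$ of $C$ containing $v$. Because $A_j v>0$ for $j\notin I$, a relative neighbourhood of $v$ inside the subspace $L:=\{x:A_i x=0,\ i\in I\}$ satisfies every constraint and hence lies in $F(v)$; therefore $\dim F(v)=\dim L=n-\mathrm{rank}\,A_{\mathrm{tight}}(v;0)$. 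An extremal ray of $C$ is a one-dimensional face, so $v$ spans one precisely when $\dim F(v)=1$, i.e.\ when $\mathrm{rank}\,A_{\mathrm{tight}}(v;0)=n-1$; for pointed $C$---the case relevant to the applications---$F(v)=L\cap C$ is then genuinely a ray rather than a line.

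The step I expect to demand the most care is the interface with the polyhedral background rather than the linear algebra itself: verifying that $F(v)$ as defined really is the smallest face of $C$ through $v$ and has the stated dimension, and reconciling the ``one-dimensional face'' notion of an extremal ray with the Minkowski--Weyl decomposition notion implicit in \eqref{eqn: V-representation definition} (a ray $r$ for which every decomposition $r=r_1+r_2$ with $r_1,r_2\in C$ has $r_1,r_2$ nonnegative multiples of $r$). These are classical facts contained in \cite{Ale}, \cite{Bas}, \cite{HuWe}, so in the write-up I would present the linear-algebra core exactly as above and invoke the standard polyhedral statements with a brief justification rather than reprove them.
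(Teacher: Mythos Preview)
Your proof is correct and follows the standard textbook route. Note, however, that the paper does not actually prove this proposition: it is stated in the preliminaries as background from convex geometry, with the surrounding text explicitly referring the reader to \cite{Ale}, \cite{Bas}, \cite{HuWe}. So there is no ``paper's own proof'' to compare against; your write-up is exactly the kind of argument one finds in those references, and your plan to cite them for the face-theoretic facts (that $F(v)$ is the smallest face through $v$, and the equivalence of the two notions of extremal ray) is appropriate.
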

	We will call the equations formed by the rows of the system $Ax=b$ the boundary hyperplanes of $P$. Proposition \ref{prop: extremal points and extremal rays definition} says that the extremal points lie at the unique intersections of at least $n$ boundary hyperplanes, and the extremal rays are lines obtained by the intersection of at least $n-1$ boundary hyperplanes of the polyhedron given by the homogeneous system $Ax\geq 0$. 
	
	For two polyhedra 
	\begin{equation}\label{key}
		\begin{aligned}
			P_1&=\ch(V_1,\dots,V_{\l_1})+\nonneg(r_1,\dots,r_{m_1}),\\
			P_2&=\ch(W_1,\dots,W_{\l_2})+\nonneg(s_1,\dots,s_{m_2}),
		\end{aligned}
	\end{equation} it is straightforward to verify that their convex hull is
	\begin{equation}\label{eqn: convex hull of V-representations}
		\begin{aligned}
			\ch(P_1,P_2)=&\ch(V_1,\dots,V_{\l_1},W_1,\dots,W_{\l_2})\\
			&+\nonneg(r_1,\dots,r_{m_1},s_1,\dots,s_{m_2}).
		\end{aligned}
	\end{equation}
	Let now $\sigma:\R^n\rightarrow\R^n$ be an affine transformation given by $\sigma(v)=\tilde\sigma(v)+t,$ where $\tilde\sigma$ is linear and $t\in\R^n$. Then if $P$ is a polyhedron given by \eqref{eqn: V-representation definition},
	\begin{equation}\label{key}
		\sigma(P)=\ch(\sigma(V_1),\dots,\sigma(V_\l))+\nonneg(\tilde\sigma (r_1),\dots,\tilde\sigma(r_m)).
	\end{equation} 
	
	\section{Basic properties of $A(s_1,\dots,s_k,w)$}\label{sec: basic properties of A}
	We recall the definition
	\begin{equation}\label{eqn: A definition again}
		A(s_1,\dots,s_k,w)=\sumstar_{d\geq 1}\frac{L(s_1,\chi_{8d})\dots L(s_k,\chi_{8d})}{d^w}.
	\end{equation}
	To find the region of absolute convergence we use Heath-Brown's bound \eqref{eqn: Heath-Brown fourth moment} for $k\leq 4$ or the GLH \eqref{eqn: GLH for individual L-function} for higher $k$ and obtain the region of absolute convergence
	\begin{equation}\label{eqn: basic region 1}
		\{(s_1,\dots,s_k,w)\in\C^{k+1}:\ \re(s_j)\geq 1/2 \text{ for each $j=1,\dots,k$},\ \re(w)>1\}.
	\end{equation}
	From now on, we will shorten our notation and write the set above as \begin{equation}\label{key}
		\{\re(s_j)\geq 1/2,\ \re(w)>1\}.
	\end{equation}
	
	We obtain a second region by expanding each $L$-function into its Dirichlet series and then first evaluating the sum over the family, getting
	\begin{equation}\label{eqn: A after exchanging sums}
		A(s_1,\dots,s_k,w)=\sum_{\substack{n_1,\dots,n_k\\n_1\dots n_k\odd}}\frac{L_D\lz w,\leg{\cdot}{n_1\dots n_k}\pz}{n_1^{s_1}\dots n_k^{s_k}},
	\end{equation} where
	\begin{equation}\label{eqn: L_D definition}
		\begin{aligned}
			L_D\lz w,\leg{\cdot}{n}\pz&=\sumstar_{d\geq 1}\frac{\chi_{8d}(n)}{d^w}=\sum_{d\geq 1}\frac{\psi_1(d) \chi_{8d}(n)\mu^2(d)}{d^w}
			\\
			&=\leg{8}{n}\frac{L\lz w,\leg{\cdot}{n}\psi_1\pz}{L\lz 2w,\leg{\cdot}{n}^2\psi_1\pz}.
		\end{aligned}
	\end{equation}
	Noting that the denominator is $\asymp 1$ whenever $\re(w)>1/2+\delta$ for any $\delta>0$, we see from \eqref{eqn: first moment over non-primitive characters} that \eqref{eqn: A after exchanging sums} is absolutely convergent for
	\begin{equation}\label{eqn: basic region 2}
		\{\re(w)>1/2,\ \re(s_j)>1\},
	\end{equation} up to a simple pole at $w=1$ coming from the terms with $n_1\dots n_k=\square$, with residue
	\begin{equation}\label{key}
		\res{w=1}A(s_1,\dots,s_k,w)=\frac{2}{3\zeta(2)}\sum_{\substack{n_1\dots n_k=\square,\\n_1\dots n_k\odd}}\frac{a(n_1\dots n_k)}{n_1^{s_1}\dots n_k^{s_k}}=T(S),
	\end{equation} where $S$, $T(S)$ and $a(n)$ are as in Conjecture \ref{con: cfkrs}.
	
	By Bochner's Tube Theorem, we have a holomorphic continuation of $(w-1)A(s_1,\dots,s_k,w)$ to the convex hull of \eqref{eqn: basic region 1} and \eqref{eqn: basic region 2}, which is the region
	\begin{equation}\label{eqn:definition of R_1}
		R_1=\{\re(s_j)\geq 1/2,\ \re(w)>1/2,\ \re(s_j+w)>3/2\}.
	\end{equation}
	
	For any $J\subset\{1,\dots,k\}$, applying the functional equation to $L(s_j,\chi_{8d})$ in \eqref{eqn: A definition} for $j\in J$, we obtain the functional equation
	\begin{equation}\label{eqn: A fe}
		A(s_1,\dots,s_k,w)=\prod_{j\in J}X(s_j) \cdot A\lz s_1^J,\dots,s_k^J,w+\sum_{j\in J}s_j-\frac{|J|}{2} \pz,
	\end{equation}
	where
	\begin{equation}\label{key}
		s_j^J=\begin{cases}
			1-s_j,&\hbox{if $j\in J$,}\\
			s_j,&\hbox{ if $j\notin J$.}
		\end{cases}
	\end{equation}
	The pole at $w=1$ gives rise to a new pole at $w=1+\frac{|J|}{2}-\sum\limits_{j\in J}s_j$ with residue
	\begin{equation}\label{eqn: A residues}
		\res{w=1+\frac{|J|}{2}-\sum\limits_{j\in J}s_j}A(s_1,\dots,s_k,w)=\prod_{j\in J}X(s_j)\cdot T(S\setminus S_J\cup S_J^-),
	\end{equation} where $S_J$ and $S_J^-$ are as in Conjecture \ref{con: cfkrs}.
	
	\subsection{Heuristic extra functional equation}\label{sec: heuristic fe in w}
	The obtained region $R_1$ would be enough to compute the 1st moment and detect the diagonal terms in the corollaries. To do better, we use an extra functional equation in the $w$ variable.
	
	Assume that we had \eqref{eqn: A after exchanging sums}, but with $L_D$ replaced by $L$, and that all the characters involved were primitive. We could then apply the functional equation for $L\lz w,\leg{\cdot}{n_1\dots n_k}\pz$, and obtain a relation of the form
	\begin{equation}\label{eqn: heuristic fe in w}
		A(s_1,\dots,s_k,w)\approx A(s_1+w-1/2,\dots,s_k+w-1/2,1-w).
	\end{equation}
	This would give a meromorphic extension to the region in Theorem \ref{thm: main theorem}, and slightly more, since we could iterate the functional equations under $\sigma_J$ and \eqref{eqn: heuristic fe in w}. However, this wouldn't extend beyond the most restricting half-spaces given by $\re(s_{j_1}+s_{j_2}+s_{j_3}+s_{j_4}+2w)>4$.
	
	Obtaining this functional equation rigorously is the main part in the multiple Dirichlet series approach, such as \cite{DGH}, and is achieved by modifying $A(s_1,\dots,s_k,w)$ by inserting some carefully chosen weights into its Dirichlet series. However, it is very hard to find these special weights, and in the end, extra work is required to go back to the original problem.
	
	In the next section, we will obtain an extension of $A(s_1,\dots,s_k,w)$ into a similar region as the one which would come from \eqref{eqn: heuristic fe in w}, without modifying $A(s_1,\dots,s_k,w)$. A crucial tool will be the functional equation \eqref{eqn: fe for all L-functions}.

	\section{The functional equation in $w$ and meromorphic continuation}
	In this section, we provide the meromorphic continuation of 
	\begin{equation}\label{key}
		A(s_1,\dots,s_k,w)=\sumstar_{d\geq 1}\frac{L(s_1,\chi_{8d})\dots L(s_k,\chi_{8d})}{d^w}
	\end{equation} to a similar region as one coming from the heuristic functional equation in $w$ from Section \ref{sec: heuristic fe in w}.
	We have
	\begin{equation}\label{eqn: A in terms of A_c}
		\begin{aligned}
			A(s_1,\dots,s_k,w)&=\sum_{d\geq 1}\mu^2(d)\frac{\psi_1(d)L(s_1,\chi_{8d})\dots L(s_k,\chi_{8d})}{d^w}\\
			&=\sum_{d\geq 1}\frac{\psi_1(d)L(s_1,\chi_{8d})\dots L(s_k,\chi_{8d})}{d^w}\sum_{c^2|d}\mu(c)\\
			&=\sum_{c\geq 1}\frac{\psi_1(c^2)\mu(c)}{c^{2w}}\sum_{d\geq 1}\frac{\psi_1(d)L(s_1,\chi_{8c^2d})\dots L(s_k,\chi_{8c^2d})}{d^w}\\
			&=\sum_{c\geq 1}\frac{\psi_1(c)\mu(c)}{c^{2w}}\cdot A_{(c)}(s_1,\dots,s_k,w),
		\end{aligned}
	\end{equation} where 
	\begin{equation}\label{eqn: A_c first expression}
		A_{(c)}(s_1,\dots,s_k,w)=\sum_{d\geq 1}\frac{\psi_1(d)L(s_1,\chi_{8c^2d})\dots L(s_k,\chi_{8c^2d})}{d^w}.
	\end{equation}
	We also have a second expression obtained by expanding each $L$-function and evaluating the sum over $d$ first:
	\begin{equation}\label{eqn: A_c second expression}
		\begin{aligned}
			A_{(c)}(s_1,\dots,s_k,w)&=\sum_{d,n_1,\dots,n_k\geq 1}\frac{\psi_1(d)\leg{8c^2d}{n_1\dots n_k}}{n_1^{s_1}\dots n_k^{s_k}d^w}\\
			&=\sum_{\substack{d,n_1,\dots,n_k\geq 1\\(n_1\dots n_k,2c)=1}}\frac{\psi_1(d)\leg{2}{n_1\dots n_k}\leg{d}{n_1\dots n_k}}{n_1^{s_1}\dots n_k^{s_k}d^w}\\
			&=\sum_{\substack{n_1,\dots,n_k\geq 1\\ (n_1\dots n_k,2c)=1}}\frac{\psi_2(n_1\dots n_k)L\lz w,\leg{\cdot}{n_1\dots n_k}\cdot \psi_1\pz}{n_1^{s_1}\dots n_k^{s_k}},
		\end{aligned}
	\end{equation}
	where $\psi_2(n)=\bfrac{2}{n}$ is a Dirichlet character modulo 8.
	
	Using \eqref{eqn: non-primitive in terms of primitive L-function} and \eqref{eqn: first moment over non-primitive characters}, assuming that $k\leq 4$ or that GLH holds, \eqref{eqn: A_c first expression} is absolutely convergent in the region
	\begin{equation}\label{key}
		\{\re(w)>1,\ \re(s_j)\geq 1/2\},
	\end{equation}
	and by \eqref{eqn: first moment over non-primitive characters}, the expression \eqref{eqn: A_c second expression} converges absolutely in
	\begin{equation}\label{key}
		\{\re(s_j)>1,\ \re(s_j+w)>3/2, \re(w)>0\},
	\end{equation} except a pole at $w=1$, and it is $\ll c^\epsilon$ in both these regions (away from the pole). By Bochner's Tube Theorem and Proposition \ref{prop: extending inequalities}, $A_{(c)}(s_1,\dots,s_k,w)$ has a meromorphic continuation to
	\begin{equation}\label{convex hull}
		\{\re(s_j)\geq 1/2,\ \re(w)>0,\ \re(s_j+w)>3/2\},
	\end{equation} and it is $\ll c^\epsilon$ here.
	
	Now we use the functional equation \eqref{eqn: fe for all L-functions} in \eqref{eqn: A_c second expression}
	and obtain
	\begin{equation}\label{eqn: fe for A_c}
		\begin{aligned}
			A_{(c)}&(s_1,\dots,s_k,w)\\
			&=X_+(w)\sum_{\substack{(n_1\dots n_k,2c)=1,\\n_1\dots n_k\equiv 1\mod 4}}\frac{\psi_2(n_1\dots n_k)K\lz 1-w,\leg{\cdot}{n_1\dots n_k}\pz}{n_1^{s_1+w-1/2}\dots n_k^{s_k+w-1/2}}\\
			&+X_-(w)\sum_{\substack{(n_1\dots n_k,2c)=1,\\n_1\dots n_k\equiv 3\mod 4}}\frac{\psi_2(n_1\dots n_k)K\lz 1-w,\leg{\cdot}{n_1\dots n_k}\pz }{n_1^{s_1+w-1/2}\dots n_k^{s_k+w-1/2}}.
		\end{aligned}
	\end{equation}
	\begin{sloppypar}
		Both sums on the right-hand side above can be written as a combination of the terms $B_{(c)}(s_1+w-1/2,\dots,s_k+w-1/2,1-w;\psi)$ times some gamma factors, where $\psi$ is a character modulo $8$, and
	\end{sloppypar}
	\begin{equation}\label{key}
		B_{(c)}(s_1,\dots,s_k,w;\psi)=\sum_{\substack{(n_1,\dots,n_k,2c)=1}}\frac{\psi(n_1\dots n_k)K\lz w,\leg{\cdot}{n_1\dots n_k}\pz }{n_1^{s_1}\dots n_k^{s_k}}
	\end{equation} (see also \cite[Section 6.4]{Cec1} for a detailed computation).

	Setting $m=n_1\dots n_k$, we have $B_{(c)}(s_1,\dots,s_k,w;\psi)=B_{s_1-s,\dots,s_k-s}(s,w;\psi)$, where
	\begin{equation}\label{eqn: B_t}
		B_{t_1\dots,t_k}(s,w,\psi)=\sum_{(m,2c)=1}\frac{\psi(m)K\lz w,\leg{\cdot}{m}\pz}{m^s}\sum_{m=n_1\dots n_k}\frac{1}{n_1^{t_1}\dots n_k^{t_k}}.
	\end{equation}
	With the change of variables $t_j=s_j-s$, we think of $s$ as the minimum of the real parts of $s_j$, so that $\re(t_j)\geq 0$ for every $j=1,\dots,k.$
	
	By \eqref{eqn: first moment K}, the expression \eqref{eqn: B_t} converges absolutely for $\re(t_j)\geq 0$ in the region
	\begin{equation}\label{eqn: B_t first region}
		\{\re(s)>1, \re(w)>1/2\}.
	\end{equation}

	Our next goal is to prove that for $\re(t_j)\geq 0$, $B_{t_1,\dots,t_k}(s,w;\psi)$ has a meromorphic continuation to the region
	\begin{equation}\label{key}
		\{\re(s)>1/2,\ \re(w)>1\},
	\end{equation} with possible poles only at $s+t_j=1$, and it is $\ll c^\epsilon$ in this region away from the poles.
	
	To prove this, we first expand $B_{t_1,\dots,t_k}(s,w;\psi)$ into a double sum, replace the Gauss sums by their multiplicative analogue, and then first compute the sum over $m$, which can be analytically continued because it will be an Euler product.
	
	We define 
	\begin{equation}\label{key}
		f_{t_1,\dots,t_k}(m)=\sum_{m=n_1\dots n_k}\frac{1}{n_1^{t_1}\dots n_k^{t_k}},
	\end{equation}
	so that
	\begin{equation}\label{key}
		B_{t_1,\dots,t_k}(s,w;\psi)=\sum_{\substack{m,n\geq 1\\(m,2c)=1}}\frac{\psi(m)\tau\lz\leg{\cdot}{m},n\pz f_{t_1,\dots,t_k}(m)}{m^{s+1/2}n^w},
	\end{equation} and using \eqref{eqn: multiplicative Gauss sums}, the above can be written as a combination of the expressions
	\begin{equation}\label{key}
		\tilde B_{t_1,\dots,t_k}(s,w;\psi)=\sum_{n\geq 1}\frac{1}{n^w}\sum_{(m,2c)=1}\frac{\psi(m) G\lz\leg{\cdot}{m},n\pz f_{t_1,\dots,t_k}(m)}{m^{s+1/2}},
	\end{equation} with constant coefficients, where $\psi$ ranges over characters modulo 8.
	Let us now denote the inner sum as $D_{t_1,\dots,t_k}(s;\psi,n,c)$, so
	\begin{equation}\label{key}
		D_{t_1,\dots,t_k}(s;\psi,n,c)=\sum_{(m,2c)=1}\frac{\psi(m) G\lz\leg{\cdot}{m},n\pz f_{t_1,\dots,t_k}(m)}{m^{s+1/2}}.
	\end{equation}
	\begin{lemma}\label{lemma: continuation of D}
		We have
		\begin{equation}\label{key}
			D_{t_1,\dots,t_k}(s;\psi,n,c)=\prod_{j=1}^k\frac{L\lz s+t_j,\leg{4n}{\cdot}\psi\pz}{L\lz 2s+2t_j,\leg{4n}{\cdot}^2\psi^2\pz}\cdot Z_{c,n}(t_1,\dots,t_k,s),
		\end{equation} where $Z_{c,n}(t_1,\dots,t_k,s)$ is an Euler product that is absolutely convergent for $\re(t_j)>0$ and $\re(s)>1/2$, and is $\ll |cn|^\epsilon$ in this region.
	\end{lemma}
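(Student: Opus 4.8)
The plan is to exploit the fact that everything appearing in the summand of $D_{t_1,\dots,t_k}(s;\psi,n,c)$ is multiplicative in $m$, write the sum as an Euler product, and then read off its local factors from Soundararajan's evaluation \eqref{eqn: Gauss sum evaluation}. First I would note that $m\mapsto f_{t_1,\dots,t_k}(m)$ is multiplicative (its Dirichlet series is $\prod_{j=1}^k\zeta(s+t_j)$, so $f_{t_1,\dots,t_k}$ is the Dirichlet convolution of the completely multiplicative functions $m\mapsto m^{-t_j}$), that $\psi$ is completely multiplicative, and that $m\mapsto G(\leg{\cdot}{m},n)$ is multiplicative by the relation noted right after \eqref{eqn: multiplicative Gauss sums}. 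Since $D$ has no Euler factor at primes dividing $2c$, this gives, in the region of absolute convergence $\re(s)>1$, $\re(t_j)\geq 0$,
\[
D_{t_1,\dots,t_k}(s;\psi,n,c)=\prod_{p\nmid 2c}\sum_{a\geq 0}\frac{\psi(p^a)\,G\lz\leg{\cdot}{p^a},n\pz\,f_{t_1,\dots,t_k}(p^a)}{p^{a(s+1/2)}},
\]
and, writing $p^\alpha\|n$, the evaluation \eqref{eqn: Gauss sum evaluation} shows each local factor is a finite sum since $G(\leg{\cdot}{p^a},n)=0$ for $a\geq\alpha+2$.

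Next I would compute the local factor at a generic prime $p\nmid 2cn$, where $\alpha=0$: only $a=0,1$ survive, \eqref{eqn: Gauss sum evaluation} gives $G(\leg{\cdot}{1},n)=1$ and $G(\leg{\cdot}{p},n)=\leg{n}{p}\sqrt p$, and $f_{t_1,\dots,t_k}(p)=\sum_{j=1}^k p^{-t_j}$, so the factor equals $1+\psi(p)\leg{n}{p}\sum_{j=1}^k p^{-s-t_j}$. On the other hand, since $\psi$ and $\leg{4n}{\cdot}$ are real characters and $\leg{4n}{p}=\leg{n}{p}$ for odd $p$ (so $\leg{4n}{\cdot}^2\psi^2$ is principal on integers coprime to $2n$), the local factor of $\prod_{j=1}^k L(s+t_j,\leg{4n}{\cdot}\psi)/L(2s+2t_j,\leg{4n}{\cdot}^2\psi^2)$ at such $p$ is $\prod_{j=1}^k(1+\psi(p)\leg{n}{p}p^{-s-t_j})$. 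I would then \emph{define} $Z_{c,n}(t_1,\dots,t_k,s)$ to be the Euler product whose factor at $p\nmid 2cn$ is the quotient
\[
\frac{1+\psi(p)\leg{n}{p}\sum_{j=1}^k p^{-s-t_j}}{\prod_{j=1}^k\lz 1+\psi(p)\leg{n}{p}p^{-s-t_j}\pz}=1+O_k\lz p^{-2\re(s)}\pz,
\]
the estimate following by expanding the denominator: the linear terms cancel the numerator and the tail, divided by $\prod_j(1+\psi(p)\leg{n}{p}p^{-s-t_j})$ (bounded away from $0$ since $\re(s)>1/2$), is $\ll_k$ its degree-two term. As $\re(s)>1/2$ forces $2\re(s)>1$, this product over $p\nmid 2cn$ converges absolutely for $\re(s)>1/2$, $\re(t_j)>0$.

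It remains to treat the finitely many primes $p\mid 2cn$. For $p\mid c$ there is no $D$-factor, so the $Z$-factor is the reciprocal of $\prod_j(1+\psi(p)\leg{n}{p}p^{-s-t_j})$, which is bounded; for $p\mid n$ and for $p=2$ one has $\leg{4n}{p}=0$, so the matching Euler factor of $\prod_j L(s+t_j,\leg{4n}{\cdot}\psi)/L(2s+2t_j,\dots)$ is $1$ and the $Z$-factor equals the $D$-factor, which by $|\phi(p^a)|\leq p^a$, $|f_{t_1,\dots,t_k}(p^a)|\ll_k a^{k-1}$ (valid for $\re(t_j)\geq 0$) and the bounds in \eqref{eqn: Gauss sum evaluation} is $\ll_k\sum_{a\geq 0}a^{k-1}p^{a(1/2-\re(s))}\ll_k 1$ since $\re(s)>1/2$. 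Hence each of the $\omega(2cn)$ bad local factors of $Z_{c,n}$ is $O_k(1)$, so their product is $\ll_k (cn)^\epsilon$ by $\omega(2cn)\ll\log(cn)/\log\log(cn)$. Checking the factorization local factor by local factor in $\re(s)>1$ and then invoking the identity theorem (the $L$-functions being meromorphic everywhere and the $Z_{c,n}$-product absolutely convergent for $\re(s)>1/2$, $\re(t_j)>0$) gives the claimed identity, the absolute convergence of $Z_{c,n}$, the location of poles only at $s+t_j=1$, and the bound $Z_{c,n}\ll|cn|^\epsilon$.

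The only non-mechanical point — the "hard part" — is this last step at primes $p\mid n$: there $G(\leg{\cdot}{p^a},n)$ is as large as $\phi(p^a)\asymp p^a$, and one must use both that \eqref{eqn: Gauss sum evaluation} truncates the local sum at $a=\alpha+1$ and that the normalization $p^{-a(s+1/2)}$ dominates this growth precisely in the range $\re(s)>1/2$, so that each such factor stays $O_k(1)$ uniformly in $n$ and the $(cn)^\epsilon$ bound is legitimate; everything else is a routine Euler-product computation.
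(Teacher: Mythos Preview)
Your proof is correct and follows essentially the same approach as the paper: write $D$ as an Euler product via the multiplicativity of $G(\leg{\cdot}{m},n)$, compute the local factor at $p\nmid 2cn$ from \eqref{eqn: Gauss sum evaluation}, compare it to the local factor of $\prod_j L(s+t_j,\leg{4n}{\cdot}\psi)/L(2s+2t_j,\leg{4n}{\cdot}^2\psi^2)$, and bound the finitely many remaining factors at $p\mid cn$ using $|G(\leg{\cdot}{p^a},n)|\leq p^a$ together with the truncation at $a=\alpha+1$. The only cosmetic difference is that the paper peels off the $L$-factors one at a time via an inductive identity (yielding the slightly sharper $O(p^{-2s-t_j})$), whereas you define $Z_{c,n}$ directly as the quotient and estimate the generic local factor as $1+O_k(p^{-2\re(s)})$; both are equivalent for the stated region.
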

	\begin{proof}
		Since all the coefficients are multiplicative in $m$, we have the Euler product
		\begin{equation}\label{key}
			D_{t_1,\dots,t_k}(s;\psi,n,c)=\prod_{p\nmid 2c}\lz\sum_{j\geq 0}\frac{\psi(p^j)G\lz\leg{\cdot}{p^j},n\pz f_{t_1,\dots,t_k}(p^j)}{p^{j(s+1/2)}}\pz=P_1(s)P_2(s),
		\end{equation} where $P_1$ is the product over $p\nmid 2cn$, and $P_2$ is the rest. 
		
		We first bound $P_2(s)$. Note that $P_2(s)$ is a finite product and \eqref{eqn: Gauss sum evaluation} implies that each factor is a sum of finitely many terms, so it is a holomorphic function of $s,t_1,\dots,t_k$. Let $n=\prod_jp_j^{a_j}$ be the prime factorization of $n$ and denote by $n'$ the product of the prime powers $p_j^{a_j}$ with $p_j\nmid 2c$. From \eqref{eqn: Gauss sum evaluation}, we see that $\lab G\lz\leg{\cdot}{m},n\pz\rab\leq m$ and we also have $\lab f_{t_1,\dots,t_k}(m)\rab\leq \tau_k(m)$ (the $k$-fold divisor function) for $\re(t_j)\geq 0$, therefore
		\begin{equation}\label{key}
			|P_2(s)|\leq\prod_{p^a || n'} \lz1+\sum_{j=1}^{a+1}\frac{p^j\tau_k(p^j)}{p^{j(\re(s)+1/2)}}\pz\leq \prod_{p^a|| n'}\lz1+\frac{\tau_k(p^{a+1})}{p^{\re(s)-1/2}-1}\pz.
		\end{equation} For $\re(s)>1/2+\delta$, let $C(\delta)>\max\{2,\frac{1}{2^{\delta}-1}\}$. Then
		\begin{equation}\label{key}
			\begin{aligned}
				|P_2(s)|&\leq\prod_{p^a|| n'}\lz 1+\tau_k(p^{a+1}) C(\delta)\pz\leq \prod_{p^a||n'}\lz\tau_k(p^{2a})C(\delta)\pz\\
				&\leq \tau_k(n'^2)C(\delta)^{\omega(n')}\ll n^\epsilon.
			\end{aligned}
		\end{equation}
		
		Now we deal with $P_1(s)$. For an odd prime $p\nmid n$, \eqref{eqn: Gauss sum evaluation} implies that \begin{equation}\label{key}
			G\lz\leg{\cdot}{p^j},n\pz=\begin{cases}
				1,&\hbox{if $j=0$,}\\
				\leg{n}{p}\sqrt p,&\hbox{if $j=1$},\\
				0,&\hbox{if $j\geq 2$,}
			\end{cases},
		\end{equation} so
		\begin{equation}\label{key}
			\begin{aligned}
				P_1(s)&=\prod_{p\nmid 2cn}\lz1+\frac{\psi(p)\leg{n}{p}f_{t_1,\dots,t_k}(p)}{p^s}\pz\\
				&=\prod_{p\nmid c}\lz1+\frac{\psi(p)\leg{4n}{p}}{p^s}\lz\frac{1}{p^{t_1}}+\dots+\frac{1}{p^{t_k}}\pz\pz.
			\end{aligned}
		\end{equation}
		Now using that for $\re(s)>0$,
		\begin{equation}\label{key}
			\begin{aligned}
				&\lz1+\frac{\chi(p)}{p^s}\lz p^{-t_1}+\dots +p^{-t_k}\pz\pz\lz1+\frac{\chi(p)}{p^{s+t_1}}\pz^{-1}\\ 
				&=1+\frac{\chi(p)}{p^s}\lz p^{-t_2}+\dots +p^{-t_k}\pz\lz1+\frac{\chi(p)}{p^{s+t_1}}\pz^{-1}\\
				&=1+\frac{\chi(p)}{p^s}\lz p^{-t_2}+\dots +p^{-t_k}\pz\lz1-\frac{\chi(p)}{p^{s+t_1}+\chi(p)}\pz\\
				&=1+\frac{\chi(p)}{p^s}\lz p^{-t_2}+\dots +p^{-t_k}\pz+O\lz p^{-2s-t_1}\pz,
			\end{aligned}
		\end{equation} together with
		\begin{equation}\label{key}
			1+\frac{\chi(p)}{p^{s+t_1}}=\lz 1-\frac{\chi(p)^2}{p^{2s+2t_1}}\pz\lz1-\frac{\chi(p)}{p^{s+t_1}}\pz^{-1},
		\end{equation} we inductively get
		\begin{equation}\label{key}
			\begin{aligned}
				P_1(s)&=\prod_{j=1}^k\frac{L_{(c)}\lz s+t_j,\psi\leg{4n}{\cdot}\pz}{L_{(c)}\lz 2s+2t_j, \psi^2\leg{4n}{\cdot}^2\pz}\cdot E(t_1,\dots,t_k,s)\\
				&=\prod_{j=1}^k\lz\frac{L\lz s+t_j,\psi\leg{4n}{\cdot}\pz}{L\lz 2s+2t_j,\psi^2\leg{4n}{\cdot}^2\pz}\cdot\prod_{p|c}\lz 1+\frac{\leg{4n}{p}\psi(p)}{p^{s+t_j}}\pz^{-1}\pz E(t_1,\dots,t_k,s),
			\end{aligned}
		\end{equation} where
		\begin{equation}\label{key}
			E(t_1,\dots,t_k,s)=\prod_{p\nmid c}\lz1+O\lz p^{-2s-t_1}+\dots +p^{-2s-t_k}\pz\pz.
		\end{equation}
		The lemma follows after setting \begin{equation}\label{key}
			Z_{c,n}(t_1,\dots,t_k,s)= P_2(s) E(t_1,\dots,t_k,s)\prod_{j=1}^k\prod_{p|c}\lz1+\frac{\leg{4n}{p}\psi(p)}{p^{s+t_j}}\pz^{-1},
		\end{equation}
		and the bound
		\begin{equation}\label{key}
			\prod_{p|c}\lz1+\frac{\leg{4n}{p}\psi(p)}{p^{s+t_j}}\pz^{-1}\leq\tau(c).
		\end{equation}
	\end{proof}
	
	Let us now assume that either $k\leq 4$, or that GLH holds. Then Lemma \ref{lemma: continuation of D} implies that for $\re(t_j)\geq 0$, $\tilde B_{t_1,\dots,t_k}(s,w;\psi)$, and hence also $ B_{t_1,\dots,t_k}(s,w;\psi)$ has a meromorphic continuation to the region \begin{equation}\label{key}
		\{\re(s)>1/2, \re(w)>1\},
	\end{equation} with only poles at $s+t_j=1$ which appear from the terms with $n=\square$ if $\psi$ is a principal character, and it is $\ll |c|^\epsilon$ in this region away from the poles. This region together with the region from \eqref{eqn: B_t first region} implies that for $\re(t_j)\geq 0$, $B_{t_1,\dots,t_k}(s,w;\psi)$ has a meromorphic continuation to
	\begin{equation}\label{eqn: region B}
		\{\re(s)>1/2,\ \re(w)>1/2,\ \re(s+w)>3/2\}.
	\end{equation}
	
	Upon setting $s=\min\limits_{1\leq j\leq k}\{\re(s_j)\}$, it follows that $B_{(c)}(s_1,\dots,s_k,w;\psi)$ has a meromorphic continuation to the region
	\begin{equation}\label{key}
		\{\re(s_j)>1/2, \re(w)>1/2,\ \re(s_j+w)>3/2\},
	\end{equation} with possible poles at $s_j=1$, and it is $\ll c^\epsilon$ away from the poles.
	
	The functional equation \eqref{eqn: fe for A_c}
	provides a continuation of $A_{(c)}(s_1,\dots,s_k,w)$ to the region 
	\begin{equation}\label{eqn: extra region for A_c}
		\{\re(s_j+w)>1,\ \re(w)<1/2, \re(s_j)>1\},
	\end{equation}
	with poles at $s_j+w=3/2$ ($X_{\pm}(w)$ are holomorphic in this region). Note that the poles are some of those already found at the end of Section \ref{sec: basic properties of A}, namely those corresponding to the $1$-swap terms.
	
	\begin{remark}\label{remark: only use fe in w for continuation}
		The fact that we use Lemma \ref{lemma: continuation of D} only to obtain the analytic continuation and not to compute the residues is the reason why our computations of the main terms are simpler than in other works on moments in this family such as \cite{CoRo} or \cite{Sou}, and that they appear exactly in the form predicted by the recipe. Note that the application of the functional equation in \eqref{eqn: fe for A_c} corresponds to the application of the Poisson summation formula in \cite{CoRo} or \cite{Sou}, and a direct computation of the residues in Lemma \ref{lemma: continuation of D} would lead to a similar expression for the 1-swap terms as that obtained in the cited papers. It is thus not surprising that one needs to apply the functional equation in these situations to recover the main terms as they appear in Conjecture \ref{con: cfkrs}.
	\end{remark}
	
	By Bochner's Tube Theorem, $A_{(c)}(s_1,\dots,s_k,w)$ has a meromorphic continuation to
	the convex hull of \eqref{eqn: extra region for A_c} and \eqref{convex hull}, which is the region
	\begin{equation}\label{key}
		\{\re(s_j)\geq 1/2,\ \re(s_j+w)>1,\ \re(2s_j+w)>2\}.
	\end{equation} 
	By Proposition \ref{prop: extending inequalities}, it is $\ll |c|^\epsilon$ in this region away from the poles.
	
	It follows that
	\begin{equation}\label{key}
		A(s_1,\dots,s_k,w)=\sum_{c\geq 1}\frac{\psi_1(c)\mu(c)}{c^{2w}}A_{(c)}(s_1,\dots,s_k,w)
	\end{equation} has a meromorphic continuation to the region
	\begin{equation}\label{eqn: region R}
		R=\{\re(s_j)\geq 1/2,\ \re(w)>1/2,\ \re(2s_j+w)>2\}.
	\end{equation}
	Note that $R$ contains the previously found region $R_1$ from \eqref{eqn:definition of R_1}. It will be useful in the next section to find the $V$-representation of the set $U$, the closure of the base set of the tube domain $R$.
	\begin{proposition}\label{prop: V-representation of R}
		Let
		\begin{equation}\label{key}
			U=\{(x_1,\dots,x_k,z): x_j\geq 1/2,\ z\geq1/2,\ 2x_j+z\geq2\}.
		\end{equation}
		Then the $V$-representation of $U$ is
		\begin{equation}\label{key}
			U=\ch(P,Q)+\nonneg(v_1,\dots,v_{k+1}),
		\end{equation} where $P=(3/4,\dots,3/4,1/2)$, $Q=(1/2,\dots,1/2,1)$, and $v_j=(\underbrace{0,\dots,0}_{j-1},1,\allowbreak0,\dots,0)$ is the $j$th standard basis vector.
	\end{proposition}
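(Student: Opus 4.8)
The plan is to verify the two standard tests from Proposition \ref{prop: extremal points and extremal rays definition}: identify which points $(x_1,\dots,x_k,z)$ are extremal (vertices) and which directions are extremal rays of the homogeneous cone, then invoke the Minkowski--Weyl description. The polyhedron $U\subset\R^{k+1}$ is cut out by the $2k+1$ inequalities $x_j\ge 1/2$ ($j=1,\dots,k$), $z\ge 1/2$, and $2x_j+z\ge 2$ ($j=1,\dots,k$). Since $U$ is defined by inequalities all of whose gradients have nonnegative entries, $U$ is ``upward closed'': if $u\in U$ and $v_j$ is a standard basis vector, then $u+\lambda v_j\in U$ for all $\lambda\ge 0$. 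So each $v_j$ lies in the recession cone; I would first argue that these are in fact \emph{all} the extremal rays.

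For the recession cone, by Proposition \ref{prop: extremal points and extremal rays definition}(2) I look at $U_{\mathrm{hom}}=\{x_j\ge 0,\ z\ge 0,\ 2x_j+z\ge 0\}$. The constraints $2x_j+z\ge 0$ are implied by $x_j\ge 0,z\ge 0$, so $U_{\mathrm{hom}}$ is just the nonnegative orthant $\{x_1\ge0,\dots,x_k\ge0,z\ge0\}$, whose extremal rays are exactly the $k+1$ standard basis vectors $v_1,\dots,v_{k+1}$; equivalently, at $v_j$ the tight homogeneous inequalities are the $k$ coordinate constraints other than the $j$th one, which have rank $k=(k+1)-1$. Hence $\nonneg(v_1,\dots,v_{k+1})$ is precisely the recession cone of $U$.

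Next I would find the vertices. By Proposition \ref{prop: extremal points and extremal rays definition}(1), a point of $U$ is a vertex iff the tight inequalities there have rank $k+1$. At the claimed point $Q=(1/2,\dots,1/2,1)$ the tight constraints are $x_1=1/2,\dots,x_k=1/2$ together with $z\ge 1/2$? No --- at $Q$ we have $z=1>1/2$ and $2x_j+z=2$, so the tight ones are $x_j=1/2$ for all $j$ and $2x_j+z=2$ for all $j$; these already include $x_1=\dots=x_k=1/2$ which together with any one relation $2x_1+z=2$ forces $z=1$, giving rank $k+1$, so $Q$ is a vertex. At $P=(3/4,\dots,3/4,1/2)$ the tight constraints are $z=1/2$ and $2x_j+z=2$ for all $j$ (note $x_j=3/4>1/2$), and $z=1/2$ with $2x_j+z=2$ pins down $x_j=3/4$; this system has rank $k+1$, so $P$ is a vertex. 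Conversely, I would check there are no others: any vertex must make $z$ either equal $1/2$ or strictly bigger; if $z=1/2$ then $2x_j+z\ge 2$ forces $x_j\ge 3/4>1/2$, so the only remaining tight constraints are the $k$ relations $2x_j+z=2$, forcing $x_j=3/4$, i.e.\ the point $P$; if $z>1/2$ we need $k$ more tight constraints among $x_j\ge1/2$ and $2x_j+z\ge2$, but for each fixed $j$ the two constraints $x_j=1/2$ and $2x_j+z=2$ cannot both be strictly-interior-avoided unless $z=1$ (they are simultaneously tight only when $z=1$), so a rank-$(k+1)$ tight set forces $x_j=1/2$ for every $j$ and hence $z=1$, giving $Q$. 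So $\{P,Q\}$ is exactly the vertex set.

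Finally I assemble the $V$-representation: $U$ is a pointed polyhedron (it contains no line, since $U\subset\{x_j\ge1/2,z\ge1/2\}$), so the Minkowski--Weyl theorem gives $U=\ch(P,Q)+\nonneg(v_1,\dots,v_{k+1})$. As a sanity check one verifies the reverse inclusion directly: each $v_j$ has nonnegative coordinates so adding it preserves all the defining inequalities, and both $P$ and $Q$ satisfy them with the segment $[P,Q]$ doing so by convexity, so the right-hand side is contained in $U$. The main (and really only) obstacle is the bookkeeping in the converse direction of the vertex computation --- checking that no spurious vertex arises from a cleverly chosen rank-$(k+1)$ subsystem --- which is handled by the case split on whether $z=1/2$ or $z>1/2$ as above.
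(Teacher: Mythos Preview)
Your proof is correct and follows essentially the same approach as the paper's: both determine the extremal points by case-splitting on whether $z=1/2$ or $z>1/2$, and both determine the extremal rays from the homogeneous system via Proposition~\ref{prop: extremal points and extremal rays definition}. Your observation that the homogeneous constraints $2x_j+z\ge 0$ are redundant (so $U_{\mathrm{hom}}$ is simply the nonnegative orthant) is a small streamlining the paper does not make explicit; conversely, your justification in the case $z>1/2$ is a bit compressed---it would be cleaner to note separately that if $z>1/2$ and $z\neq 1$ then at most one constraint per index $j$ can be tight (hence rank $\le k$), while if $z=1$ then any index $j$ with $x_j>1/2$ has no tight constraint involving $e_j$, forcing $x_j=1/2$ for all $j$.
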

	\begin{proof}
		We will apply Proposition \ref{prop: extremal points and extremal rays definition} to find the extremal points and rays of $U$.
		
		First we find the extremal points, which lie at the intersection of at least $k+1$ defining hyperplanes of $U$. 
		
		Note that if $(x_1,\dots,x_k,z)\in U$, and $z=1/2$, we cannot have $x_j=1/2$ for any $j$, as then $2x_j+z=3/2<2.$
		Hence if $E=(x_1,\dots,x_k,z)$ is an extremal point with $z=1/2$, then the only other hyperplanes it can lie on are $2x_j+z=2$, which gives $E=P$.
		
		Otherwise, $z>1/2$, and all the remaining hyperplanes $x_j=1/2$ and $2x_j+z=2$ intersect at $Q$, so this is the only other extremal point.
		
		Now we find the rays of $U$, which lie at the intersection of at least $k$ different hyperplanes of the homogeneous system
		\begin{equation}\label{key}
			\begin{aligned}
				x_j&\geq 0,\\
				z&\geq 0,\\
				2x_j+z&\geq 0.
			\end{aligned}
		\end{equation}
		If $r=(x_1,\dots,x_k,z)$ is a ray with $z=0$, then for each $j$, $x_j=0$ or $2x_j+z=0$ both imply $x_j=0$. Therefore the only rays up to normalization with $z=0$ are $v_1,\dots,v_k$.
		
		If $r$ is a ray with $z>0$, then it can't lie on any hyperplane $2x_j+z=0$ as otherwise $x_j<0$. Therefore the only ray with $z>0$ normalized to $z=1$ is $v_{k+1}$.
	\end{proof}

	\section{Determining the region of meromorphic continuation}\label{sec: convex hull}
	We found that $A(s_1,\dots,s_k,w)$ is defined in the region $R$. It also satisfies the functional equations \eqref{eqn: A fe} under the transformations $\sigma_J$ for $J\subset\{1,\dots,k\}$, where
	\begin{equation}\label{key}
		\sigma_J:(s_1,\dots,s_k,w)\mapsto\lz s_1^J,\dots,s_k^J,w+\sum_{j\in J}s_j-\frac{|J|}{2}\pz.
	\end{equation} Therefore using Bochner's tube theorem, we conclude that it has a meromorphic continuation to the region
	\begin{equation}\label{eqn: S definition}
		S=\ch\lz\bigcup\limits_{J\subset\{1,\dots,k\}}\sigma_J(R)\pz.
	\end{equation}
	Note that there are a few technical issues, namely that $R$ is not open and that the sets $\sigma_J(R)$ intersect for different $J\subset\{1,\dots,k\}$ only at the boundary, but both of these can be easily resolved by allowing $\re(s_j)<1/2$ in \eqref{eqn: basic region 1}. We discuss these further in Section \ref{sec: proof of main theorem}.
	
	\smallskip
	
	In this section, we find an $H$-representation of the region $S$.
	\begin{theorem}\label{thm: region for main theorem}
		The region $S$ is the intersection of the half-spaces given by the inequalities
		\begin{equation}\label{hyperplanes}
			\begin{aligned}
				\re(w)&>1/2,\\
				\re(s_j+2w)&>7/4,\hspace{20pt} j\in\{1,\dots,k\},\\
				\re(s_{j_1}+s_{j_2}+2w)&>5/2,\hspace{20pt} \hbox{$j_1,j_2\in\{1,\dots,k\}$,}\\
				\re(s_{j_1}+s_{j_2}+s_{j_3}+2w)&>13/4,\hspace{14pt} \hbox{$j_1,j_2,j_3\in\{1,\dots,k\}$,}\\
				\re(s_{j_1}+s_{j_2}+s_{j_3}+s_{j_4}+2w)&>4,\hspace{32pt} \hbox{$j_1,j_2,j_3,j_4\in\{1,\dots,k\}$, }
			\end{aligned}
		\end{equation} (with $j_1,j_2,j_3,j_4$ pairwise different) and their reflections under the transformations $\sigma_J$, $J\subset\{1,\dots,k\}$.
	\end{theorem}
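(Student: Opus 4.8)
The plan is to compute the $V$-representation of $S$ directly from the $V$-representation of $U$ given in Proposition~\ref{prop: V-representation of R}, and then dualize to obtain the $H$-representation. Write $U = \ch(P,Q) + \nonneg(v_1,\dots,v_{k+1})$ with $P=(3/4,\dots,3/4,1/2)$, $Q=(1/2,\dots,1/2,1)$. For each $J\subset\{1,\dots,k\}$, the map $\sigma_J$ is affine, so $\sigma_J(U)$ has the $V$-representation obtained by applying $\sigma_J$ to $P,Q$ and applying the linear part $\tilde\sigma_J$ to the rays $v_1,\dots,v_{k+1}$. A short computation gives the images of the two vertices: $\sigma_J(Q)$ is again of ``$Q$-type'' (its last coordinate is $1+\tfrac{|J|}{2}-\sum_{j\in J}s_j$, evaluated at $Q$, which is $1$ since each $s_j=1/2$; so in fact $\sigma_J(Q)=Q$ for every $J$), and $\sigma_J(P)$ is the point with $j$-th coordinate $1/4$ for $j\in J$, $3/4$ for $j\notin J$, and last coordinate $1/2+\tfrac{|J|}{2}-\sum_{j\in J}\tfrac34 = 1/2-|J|/4$. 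For the rays, $\tilde\sigma_J(v_{k+1})=v_{k+1}$, while for $j\le k$, $\tilde\sigma_J(v_j)=v_j+ v_{k+1}$ if $j\in J$ and $\tilde\sigma_J(v_j)=-v_j+v_{k+1}$ if… — more carefully, $\tilde\sigma_J(v_j) = v_j$ if $j\notin J$ and $\tilde\sigma_J(v_j)=-v_j+v_{k+1}$ if $j\in J$ (the $+v_{k+1}$ comes from the $+\sum_{j\in J}s_j$ shift). By \eqref{eqn: convex hull of V-representations}, $S=\ch$ of all the $\sigma_J(P),\sigma_J(Q)$ together with $\nonneg$ of all the $\tilde\sigma_J(v_j)$.

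Next I would simplify this $V$-representation. The vertex set is $\{Q\}\cup\{P_J: J\subset\{1,\dots,k\}\}$ where $P_J$ has coordinates $3/4$ or $1/4$ in the first $k$ slots (according to $J$) and $1/2-|J|/4$ in the last. The ray set is $\{v_{k+1}\}\cup\{\pm v_j + (\text{something})v_{k+1}: j\le k\}$; since both $v_j$ and $-v_j+v_{k+1}$ (hence, adding $v_{k+1}$, also $v_j+v_{k+1}$ and $-v_j+v_{k+1}$) lie in the cone, and $v_{k+1}$ is there, the recession cone is exactly $\{(y_1,\dots,y_k,z): z\ge 0,\ z \pm y_j \ge 0\ \forall j\} $ — equivalently $z\ge |y_j|$ for all $j$ is too strong; rather it is $\nonneg(v_1,-v_1+v_{k+1},\dots,v_k,-v_k+v_{k+1},v_{k+1})$, whose dual cone I will need. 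I would then find the supporting hyperplanes of $S$: a linear functional $\ell(s,w)=\sum a_j s_j + b w$ is bounded below on $S$ iff it is nonnegative on the recession cone, i.e.\ $b\ge 0$ and $b \ge |a_j|$… again let me be careful: nonnegativity on $v_j$ gives $a_j\ge 0$ or the reflected generator gives $b - a_j \ge 0$; combined over all generators one gets $a_j\in\{?\}$ — the clean statement is $b\geq \sum_{j: a_j<0}(-a_j)$ is not it either. The correct bookkeeping: the extremal rays are $v_j$ ($j\le k$), $v_{k+1}$, and $-v_j+v_{k+1}$ ($j\le k$); nonnegativity forces $a_j\ge0$, $b\ge0$, and $b\ge a_j$. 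So $0\le a_j\le b$ for each $j$. Among such functionals, the facet inequalities of $S$ are $\ell \ge \min(\ell(Q),\min_J \ell(P_J))$, and I would check that the active constraints are precisely the ones listed: taking $\ell = w$ gives $\re w>1/2$ (from $Q$ and $P_\emptyset$ both giving $1/2$); taking $\ell = s_{i_1}+\dots+s_{i_t} + 2w$ with $2\le$ the coefficient bound, so here $a_j=1\le b=2$: its value at $P_J$ is $\sum_{i\in\{i_1,\dots,i_t\}}(\tfrac34\text{ or }\tfrac14) + 2(\tfrac12-|J|/4)$, minimized by taking $J\supseteq\{i_1,\dots,i_t\}$ and $|J|$ as small as possible, i.e.\ $J=\{i_1,\dots,i_t\}$, giving $t/4 + 1 - t/2 = 1 - t/4$; wait that is negative-trending, so actually the minimum over $J$ is attained at $J=\{1,\dots,k\}$. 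This sign-chasing — determining for each candidate functional which vertex $P_J$ minimizes it, and reconciling with the reflected form \eqref{eqn: higher level type 0 hyperplanes}–\eqref{eqn: higher level type t hyperplanes} — is the combinatorial heart of the argument.

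Concretely, the steps in order: (1) record the $V$-representations of all $\sigma_J(U)$; (2) assemble $S$'s $V$-representation via \eqref{eqn: convex hull of V-representations} and discard redundant vertices/rays; (3) compute the dual (normal) cone of the recession cone, obtaining the linear part $\sum a_j s_j + bw$ of any valid inequality with the constraints $0\le a_j\le b$; (4) for each such functional compute $\min_J \ell(P_J)$ and $\ell(Q)$ explicitly as a function of the $a_j$ and $b$, and read off the facet-defining ones; (5) verify this finite list of facets coincides, after absorbing the $\sigma_J$-reflections, with \eqref{hyperplanes} and its reflections — equivalently with the unified families \eqref{eqn: higher level type 0 hyperplanes} and \eqref{eqn: higher level type t hyperplanes}. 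I expect the main obstacle to be step (4)–(5): one must show no \emph{other} inequality (e.g.\ with some $a_j$ strictly between $0$ and $b$, or of ``level $t\ge 5$'') is facet-defining, which amounts to showing every such functional's minimum over the $P_J$ is already implied by a nonnegative combination of the listed ones. I would handle this by an explicit case analysis on the multiset of values $\{a_j/b\}$, using that $P_J$ flips coordinate $j$ between $3/4$ and $1/4$ at a ``cost'' of $1/2$ in $s_j$ offset by $-1/4$ in $w$ times the coefficient $b\ge 2$ of $w$, so flipping is always favorable in the relevant range — which is exactly why only the five ``levels'' $t=0,1,2,3,4$ (capped because flipping more than four coordinates stops helping once $2\cdot\tfrac14\cdot(\#\text{flips}) $ outpaces the $s$-gain, i.e.\ the inequality $b\ge a_j$ together with the discrete geometry truncates at $t\le 4$) survive.
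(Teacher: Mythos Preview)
Your overall strategy---compute the $V$-representation of $S$ from Proposition~\ref{prop: V-representation of R} and \eqref{eqn: convex hull of V-representations}, then dualize to recover the $H$-representation---is sound and is essentially dual to what the paper does. The paper goes the other way: it takes the candidate $H$-representation $V$ from the statement, proves $S\subset V$ directly (since $R\subset V$, $V$ is convex and $\sigma_J$-invariant), and then for $V\subset S$ computes the extremal points and rays of the closure $T$ of the base of $V$ (Propositions~\ref{prop: extremal points} and~\ref{prop: rays}, via the case analyses in Lemmas~\ref{lemma: type 0 hyperplanes} and~\ref{Main lemma extremal points}) and checks these already lie in the $V$-representation of $S$ that you also wrote down. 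Having the target $H$-description in hand means the paper never has to \emph{discover} the facets, only verify that the proposed polyhedron has no vertices or rays beyond $\{\sigma_J(P)\}\cup\{Q\}$ and $\{v_j,w_j\}$; your route must instead show that no functional other than those in \eqref{eqn: higher level type 0 hyperplanes}--\eqref{eqn: higher level type t hyperplanes} is facet-defining, which is the same difficulty seen from the other side.

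There is a concrete error that derails your computations: the last coordinate of $\sigma_J(P)$ is
\[
\tfrac12+\sum_{j\in J}\tfrac34-\tfrac{|J|}{2}=\tfrac12+\tfrac{|J|}{4},
\]
not $\tfrac12-\tfrac{|J|}{4}$ (you swapped the roles of $\sum_{j\in J}s_j$ and $|J|/2$ in the definition of $\sigma_J$). With the correct sign, for $\ell=\sum a_j x_j+bz$ with $0\le a_j\le b$, including $j$ in $J$ changes $\ell(P_J)$ by $-a_j/2+b/4$, so for the level-$0$ type $t$ functional ($b=2$, $t$ of the $a_j$ equal to $1$) the minimum over the $P_J$ is $3t/4+1$, attained at $P_\emptyset$, while $\ell(Q)=t/2+2$; the crossover is exactly at $t=4$, which is the correct mechanism for the truncation. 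Your ``negative-trending'' confusion and the final heuristic about flipping costs both stem from this sign slip. Beyond that, step~(4)--(5) is only gestured at: you still need an argument that no functional with some $a_j$ strictly between $0$ and $b$ (these do occur---the type $t$, level $\l\ge1$ inequalities have $a_j\in\{0,1,2\}$ with $b=2$) gives a new facet, and that for $t\ge5$ the resulting inequality is redundant. This is exactly the content the paper packages into Lemmas~\ref{lemma: type 0 hyperplanes}--\ref{Main lemma extremal points}, and without an equivalent argument your proof is incomplete.
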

	
	Let $V$ be the region defined in Theorem \ref{thm: region for main theorem}.
	
	The inclusion $S\subset V$ holds because $V$ is convex, contains $R$ and is invariant under each $\sigma_J$, $J\subset\{1,\dots,k\}$. To see that $R\subset V$, note that if $(s_1,\dots,s_k,w)\in R$, then $\re(w)>1/2$ and $\re(2s_j+w)>2$ imply each of the following:
	\begin{equation}\label{key}
		\begin{aligned}
			\re(2s_{j_1}+4w)&>2+\frac32=\frac72,\\
			\re(2s_{j_1}+2s_{j_2}+4w)&>4+1=5,\\
			\re(2s_{j_1}+2s_{j_2}+2s_{j_3}+4w)&>6+1/2=\frac{13}{2},\\
			\re(2s_{j_1}+2s_{j_2}+2s_{j_3}+2s_{j_4}+4w)&>8.
		\end{aligned}
	\end{equation} This implies that every point in $R$ satisfies the inequalities stated in \eqref{hyperplanes}. To see that they also satisfy the conditions coming from the reflections under $\sigma_J$, we refer to the description from Remark \ref{remark: reflections of hyperplanes} (i), where the claim now follows from the fact that $\re(s_j)\geq 1/2$ for $(s_1,\dots,s_k,w)\in R$.
	
	To prove that $V\subset S$, we find the $V$-representation of $T$, the closure of the base set of the tube domain $V$.
	
	An inequality in the definition of $T$ has either the form (recall Remark \ref{remark: reflections of hyperplanes}; we write $x_j$ for $\re(s_j)$ and $z$ for $\re(w)$)
	\begin{equation}\label{key}
		x_{j_1}+\dots+x_{j_\l}+z\geq\frac{1+\l}{2},
	\end{equation} for $\l=0,\dots,k,$ which we call a type 0, level $\l$ inequality, or
	\begin{equation}\label{key}
		x_{i_1}+\dots +x_{i_t}+2x_{j_1}+\dots+2x_{j_\l}+2z\geq1+\frac{3t}{4}+\l,
	\end{equation} for $t=1,\dots,4$ and $\l=0,\dots,k-t$, which we call a type $t$, level $\l$ inequality. The hyperplanes given by the corresponding equations are called type $t$, level $\l$ hyperplanes. We also call the hyperplanes of type $t\geq 1$ and level $0$ basic hyperplanes.
	
	First, we find the extremal points of $T$.
	\begin{proposition}\label{prop: extremal points}
		The extremal points of $T$ are $\sigma_J(P)$ for $J\subset\{1,\dots,k\}$, where $P=(3/4,\dots,3/4,1/2)$, and $Q=(1/2,\dots,1/2,1)$ if $k\geq 5$.
	\end{proposition}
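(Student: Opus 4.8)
The plan is to apply Proposition~\ref{prop: extremal points and extremal rays definition}, which characterizes the extremal points of the polyhedron $T$ as the points lying on a collection of defining hyperplanes whose normal vectors span $\R^{k+1}$. Since $T$ is invariant under every $\sigma_J$ (this is how $V$ was constructed), and the $\sigma_J$ permute the defining hyperplanes, the set of extremal points is a union of $\sigma_J$-orbits; hence it suffices to identify which points are extremal and then close under the group action. I would first verify directly that $P=(3/4,\dots,3/4,1/2)$ and (when $k\geq5$) $Q=(1/2,\dots,1/2,1)$ lie in $T$ and lie on $k+1$ defining hyperplanes with full-rank normals, and then argue conversely that no other extremal points exist.

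The forward direction is the easier half. For $P$: the basic type-$t$, level-$0$ inequality reads $x_{i_1}+\dots+x_{i_t}+2z\geq 1+3t/4$, and at $P$ the left side is $3t/4+1$, so \emph{every} basic hyperplane of every type $t=1,2,3,4$ passes through $P$; together with $\re(w)\geq 1/2$ this already gives many hyperplanes, and one checks the type-$1$ basic hyperplanes $x_j+2z=7/4$ for $j=1,\dots,k$ together with $z=1/2$ have normals spanning $\R^{k+1}$. For $Q$: one computes that the type-$0$ level-$1$ hyperplanes $x_j+z=1$ all pass through $Q$, and combined with one more (e.g. a basic hyperplane, valid only when $k\geq 5$ so that a type-$4$ basic inequality is tight rather than violated) they span. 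The role of the hypothesis $k\geq5$ is exactly that for $k\leq4$ the point $Q$ fails to be a vertex because too few independent hyperplanes pass through it — this is the numerical coincidence flagged in Remark~\ref{remark: reflections of hyperplanes}(2), and I would make it precise here.

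The converse — that $\{\sigma_J(P):J\}\cup\{Q\}$ (or just $\{\sigma_J(P)\}$ when $k\leq4$) exhausts the extremal points — is where the real work lies, and I expect this to be the main obstacle. The approach is a case analysis on an extremal point $E=(x_1,\dots,x_k,z)$ according to the value of $z$ and, for each coordinate $x_j$, which of the available tight inequalities involving $x_j$ actually holds. By the $\sigma_J$-symmetry one may normalize, say, to the fundamental domain $x_j\geq 1/2$ for all $j$ (equivalently $z\geq 1/2$ from $\re(w)>1/2$ being one of the reflections); on this chamber $R$ is defined by $x_j\geq 1/2$, $z\geq 1/2$, $2x_j+z\geq 2$, and one uses Proposition~\ref{prop: V-representation of R} together with the combinatorial fact that a vertex of $\ch(\bigcup_J\sigma_J(R))$ lying in a given chamber must be a vertex of that chamber's copy of $R$ — so in the normalized chamber the only candidates are $P$ and (if it survives) $Q$. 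One then has to rule out that vertices of $\ch(\bigcup_J\sigma_J(R))$ can appear on chamber \emph{walls} without being images of $P$ or $Q$, which is precisely the rank computation: at such a boundary point the tight inequalities come from several adjacent $\sigma_J(R)$'s, and one must count them. Carrying out this rank bookkeeping uniformly in $k$, keeping careful track of which level-$\ell$ hyperplanes with $\ell\geq 1$ are genuinely needed, is the delicate part; the payoff is that the list of vertices is short and independent of $k$ apart from the single point $Q$.
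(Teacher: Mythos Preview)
Your proposal has two genuine problems.

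\textbf{Circularity in the converse.} You propose to normalize an extremal point to the chamber $x_j\geq 1/2$ and then invoke Proposition~\ref{prop: V-representation of R} together with ``the combinatorial fact that a vertex of $\ch(\bigcup_J\sigma_J(R))$ lying in a given chamber must be a vertex of that chamber's copy of $R$''. But this is a statement about $\tilde U=\ch(\bigcup_J\sigma_J(U))$, not about $T$. The proposition asks for the extremal points of $T$, the polyhedron cut out by the explicit inequalities \eqref{hyperplanes} and their $\sigma_J$-reflections; the equality $T=\tilde U$ is precisely the content of Theorem~\ref{thm: region for main theorem}, whose proof \emph{uses} this proposition. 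Concretely, $T\cap\{x_j\geq 1/2\ \forall j\}$ is \emph{not} the set $U=\{x_j\geq 1/2,\ z\geq 1/2,\ 2x_j+z\geq 2\}$: on that chamber the active constraints of $T$ are $z\geq 1/2$ together with the basic type-$t$ inequalities $x_{i_1}+\dots+x_{i_t}+2z\geq 1+3t/4$, which are different half-spaces. So you cannot read off the vertices from Proposition~\ref{prop: V-representation of R}; you must analyse $T$'s own hyperplanes.

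\textbf{An error in the forward direction for $Q$.} You claim the type-$0$ level-$1$ hyperplanes $x_j+z=1$ pass through $Q=(1/2,\dots,1/2,1)$, but $1/2+1=3/2\neq 1$. In fact \emph{no} type-$0$ hyperplane contains $Q$; a short computation shows that the only defining hyperplanes of $T$ through $Q$ are the type-$4$ ones (of every level). This is why the threshold is $k\geq 5$: for $k=4$ there is a single type-$4$ hyperplane, and its normal alone cannot have rank $k+1=5$, while for $k\geq 5$ the collection of type-$4$ normals (levels $0$ and $1$ already suffice) spans $\R^{k+1}$.

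The paper's argument avoids both issues by working directly with the inequalities defining $T$. Two preparatory lemmas do the heavy lifting: Lemma~\ref{lemma: type 0 hyperplanes} shows that any point of $T$ on a type-$0$ hyperplane lies on at most one, and if extremal must be some $\sigma_J(P)$; Lemma~\ref{Main lemma extremal points} is a combinatorial statement showing that if $E\in T$ lies on a non-basic hyperplane $\sigma_J(B)$ and also on a basic hyperplane $B'$ of type $\leq 3$, then $B'$ is $\sigma_J$-invariant. These let one iteratively replace $E$ by $\sigma_{J'}(E)$ until all tight hyperplanes are basic, forcing $E=P$, unless $E$ lies only on type-$4$ hyperplanes, forcing $E=Q$. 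This is the case analysis you gesture at in your last paragraph, but the substance is in proving Lemma~\ref{Main lemma extremal points}, which you have not supplied.
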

	We remark that in our problem of computing the moments of $L-functions$ at the central point, the role of $Q$ is especially important for us -- in particular, it lies in the interior of $T$ if $k\leq 3$, allowing us to compute the corresponding moments, it lies along a boundary hyperplane for $k=4$, and becomes a vertex of $T$ if $k=5$.
	
	To prove Proposition \ref{prop: extremal points}, We first establish some lemmas.
	\begin{lemma}\label{lemma: type 0 hyperplanes}
		\begin{enumerate}
			\item Assume that $E\in T$. Then $E$ lies on at most one type 0 hyperplane.
			\item Assume that $E$ is an extremal point of $T$ that lies on a type 0 hyperplane. Then $E=\sigma_J(3/4,\dots,3/4,1/2)$ for some $J\subset \{1,\dots,k\}$.
		\end{enumerate}
	\end{lemma}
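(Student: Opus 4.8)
The plan is to exploit the rigidity coming from the two families of inequalities. For part (1), suppose $E=(x_1,\dots,x_k,z)$ lies on two distinct type $0$ hyperplanes, say of levels $\l_1$ and $\l_2$. Each such hyperplane is a reflection under some $\sigma_J$ of one of the basic type $0$ inequalities $x_{j_1}+\dots+x_{j_\l}+z\geq\frac{1+\l}{2}$; after tracking how $\sigma_J$ acts (it sends $x_j\mapsto 1-x_j$ for $j\in J$ and shifts $z$), every type $0$ hyperplane has the form $\sum_{j\in A}x_j-\sum_{j\in B}x_j+z=c$ for disjoint index sets $A,B$ and an explicit constant $c$ depending only on $|A|,|B|$. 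I would subtract two such equations to eliminate $z$, obtaining a linear relation among the $x_j$ that must be incompatible with membership in $T$ unless the two hyperplanes coincide. The key point is that on $T$ each coordinate $x_j$ is pinned into a narrow interval (essentially $[1/2,3/4]$ after using the basic inequalities and their reflections in both directions), so the only way the difference of two distinct type $0$ equations can vanish is if they were the same equation; this is where a short case check on the combinatorics of $A,B$ relative to the reflecting set is needed.

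For part (2), let $E$ be an extremal point lying on a type $0$ hyperplane $H$. By part (1), $H$ is the \emph{only} type $0$ hyperplane through $E$, so by Proposition \ref{prop: extremal points and extremal rays definition} the remaining (at least $k$) tight hyperplanes at $E$ are all of type $t\geq 1$. Applying a suitable $\sigma_J$ we may assume $H$ is the basic hyperplane $x_{j_1}+\dots+x_{j_\l}+z=\frac{1+\l}{2}$ (so it suffices to show $E=(3/4,\dots,3/4,1/2)$ in the normalized situation, because $\sigma_J$ permutes extremal points and type $0$ hyperplanes). I would then argue that the type $t$ hyperplanes through $E$, together with $H$, force every $x_j=3/4$ and $z=1/2$: the type $t$ equations read $\sum x_{i_r}+2\sum x_{j_s}+2z=1+\frac{3t}{4}+\l'$, and combined with $z=1/2$ (forced once $H$ is pinned down, since then the levels must match up) they become affine constraints on the $x_j$; counting that we need rank $k+1$ among $k+1$ coordinates, and that the only consistent solution inside the box $[1/2,3/4]^k$ is the all-$3/4$ point. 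Concretely, on $H$ one has $\sum_{s} x_{j_s}=\frac{1+\l}{2}-z$; feeding this into the tight type $1$ hyperplanes yields $x_{i}=3/4$ for the remaining indices, and then re-using $H$ gives $x_{j_s}=3/4$ and $z=1/2$.

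The main obstacle I anticipate is the bookkeeping in part (2): one must verify that the collection of type $t$ hyperplanes available through $E$ really does have enough rank to isolate a single point, and in particular rule out the possibility that $E$ sits at the intersection of $H$ with a lower-dimensional face consisting only of higher-type hyperplanes that leaves some $x_j$ free. I would handle this by showing that whenever an extremal point lies on $H$, one can always find among the $k$ further tight hyperplanes a set of $k$ of them (one ``pinning'' each coordinate besides the one eliminated via $H$) whose gradients together with that of $H$ are linearly independent — essentially because the type $1$ hyperplanes $x_i+2z\geq 7/4$ (and their $\sigma_J$-reflections $2s_j+\dots$) each control a single new variable. This reduces everything to the explicit linear algebra of a $(k{+}1)\times(k{+}1)$ system, which I would not grind through here but which has the unique solution $(3/4,\dots,3/4,1/2)$ in the base case. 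Reflecting back by $\sigma_J$ completes the claim.
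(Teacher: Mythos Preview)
Your proposal has genuine gaps, chiefly in part (1).

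First, your description of the type $0$ hyperplanes is wrong: they are precisely the hyperplanes $\sum_{j\in J}x_j+z=\frac{1+|J|}{2}$, with only \emph{nonnegative} coefficients on the $x_j$. Applying $\sigma_{J'}$ to such a hyperplane yields the type $0$ hyperplane indexed by the symmetric difference $J\triangle J'$, not one of the form $\sum_{A}x_j-\sum_{B}x_j+z=c$ with $B\neq\emptyset$. Second, and more seriously, the bound $x_j\in[1/2,3/4]$ that your subtraction argument relies on is false on $T$: the region is unbounded in each $x_j$-direction (the ray $v_j$ lies in it), so there is no a priori upper bound on the coordinates. Without a correct coordinate bound, subtracting the two type $0$ equations gives a linear relation among some of the $x_j$ but no contradiction.

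The paper's proof sidesteps all this with one move you reserved only for part (2): apply $\sigma_J$ \emph{at the outset of part (1)} to reduce to the case where the given type $0$ hyperplane is the level-$0$ one, i.e.\ $z=1/2$. Then the type $1$ basic inequalities $x_j+2z\geq 7/4$ immediately force $x_j\geq 3/4$ for \emph{all} $j$, and from this every other type $0$ expression satisfies $\sum_{j\in J'}x_{j}+z\geq \tfrac{3|J'|}{4}+\tfrac12>\tfrac{1+|J'|}{2}$, proving (1) in one line. For (2), the same bound $x_j\geq 3/4$ with $z=1/2$ shows that every type $t\geq 1$, level $\l\geq 1$ expression satisfies
\[
x_{i_1}+\dots+x_{i_t}+2x_{j_1}+\dots+2x_{j_\l}+2z\geq \tfrac{3t}{4}+\tfrac{3\l}{2}+1>1+\tfrac{3t}{4}+\l,
\]
so no higher-level hyperplane can be tight. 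Hence every tight hyperplane through $E$ is basic, and the basic hyperplanes meet only at $(3/4,\dots,3/4,1/2)$. This avoids entirely the rank bookkeeping you flagged as an obstacle.
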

	\begin{proof}
		Assume that $E=(x_1,\dots,x_k,z)$ lies on some type 0, level $\l$ hyperplane 
		\begin{equation}\label{key}
			x_{j_1}+\dots+x_{j_\l}+z=\frac{1+\l}{2}.
		\end{equation} We may assume that $\l=0$, as otherwise we may replace $E$ by $\sigma_J(E)$, where $J=\{j_1,\dots,j_\l\}.$  Therefore we have $z=1/2$. The type $1$ inequalities now imply that $x_j\geq 3/4$ for each $j=1,\dots,k$, so $E$ can't lie on any other type $0$ hyperplane. This proves (i).

		To prove (ii), assume that $E=(x_1,\dots,x_k,z)$ is extremal and lies on the type $0$ hyperplane
		\begin{equation}
			x_{j_1}+\dots+x_{j_\l}+z=\frac{1+\l}{2}.
		\end{equation} Then $\sigma_{\{j_1,\dots,j_\l\}}(E)=:\widetilde E=:(\widetilde x_1,\dots,\widetilde x_k,\widetilde z)$ satisfies $\widetilde z=1/2$, so, as above, it also holds that $\widetilde x_j\geq 3/4$ for each $j=1,\dots,k$. It follows that $\widetilde E$ does not lie on any hyperplane of level $\geq 1$: this follows for type 0 from part (i), and for type $1\leq t\leq 4$, $1\leq \l\leq k-t$, it follows from the inequality \begin{equation}\label{key}
			x_{i_1}+\dots+x_{i_t}+2x_{j_1}+\dots+2x_{j_\l}+2z\geq \frac{3t}{4}+2\cdot\frac{3\l}{4}+1>1+\frac{3t}{4}+\l.
		\end{equation}
		Therefore $\widetilde E$ lies only on level 0 hyperplanes, and since they all intersect at $(3/4,\dots,3/4,1/2)$, we must have $\widetilde E=(3/4,\dots,3/4,1/2)$. Then $E=\sigma_{\{j_1,\dots,j_\l\}}(3/4,\dots,3/4,1/2)$.
	\end{proof}
	
	\begin{lemma}\label{Main lemma extremal points}
		Assume that a point $A\in T$ lies on a type $t$, level $\l$ hyperplane $H$ with $t,\l\geq 1$. Assume that $H=\sigma_J(B)$, where $B$ is a basic hyperplane. 
		
		Then $A$ can lie on a basic hyperplane $B'$ only if
		\begin{enumerate}
			\item $B'$ is invariant under $\sigma_J$, or
			\item $B'$ and $H$ are of type 4. In this case, if $\sigma_J(B')\neq B'$, $A$ doesn't lie on any basic hyperplane of type $\leq 3.$
		\end{enumerate}
	\end{lemma}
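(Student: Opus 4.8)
The plan is to turn the statement into elementary arithmetic with the constants $c_t:=1+\tfrac34 t$ ($t\in\{1,2,3,4\}$) on the right-hand sides of the basic inequalities. First I would fix notation: every boundary hyperplane of $T$ of type $\ge 1$ is recorded by a pair $(I,L)$ of disjoint subsets of $\{1,\dots,k\}$ via $\sum_{i\in I}x_i+2\sum_{j\in L}x_j+2z=c_{|I|}+|L|$, with $I$ the type indices and $L$ the level indices, a basic hyperplane being one with $L=\emptyset$. A one-line check shows $\sigma_M$ sends $(I,L)$ to $(I,(L\triangle M)\setminus I)$, so $\sigma_M$ fixes $(I,L)$ exactly when $M\subseteq I$; in particular one may take $B=(I_H,\emptyset)$ and $J=L_H$, the level set of $H$ (only the part of $J$ outside $I_H$ matters), and then ``$B'$ is invariant under $\sigma_J$'' means precisely $L_H\subseteq I_{B'}$.

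Next I would record a short toolbox of pointwise bounds, each obtained in one line: subtract the equation ``$A$ lies on $(I,L)$'' from the defining $T$-inequality attached to a neighbouring face and read off the relevant difference of the $c_t$. (The neighbouring face is genuine: the type part never exceeds $4$ in the moves used, and the total index set never exceeds $k$, since the manipulated or adjoined index exists and hence $|I|+|L|<k$ there.) If $A\in T$ lies on $(I,L)$, then: $x_j\le\tfrac12$ for each $j\in L$ (drop a level index); $x_m\ge\tfrac12$ for each $m\notin I\cup L$ (adjoin a level index); $x_i\le x_m$ for each $i\in I$ and $m\notin I\cup L$ (swap one type index for another); and if moreover $|I|\le 3$, then also $x_m\ge\tfrac34$ for each $m\notin I\cup L$ (enlarge the type set, $c_{|I|+1}-c_{|I|}=\tfrac34$) and $x_j\le\tfrac14$ for each $j\in L$ (move a level index into the type set, $c_{|I|}-c_{|I|+1}+1=\tfrac14$).

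With this the argument is short. If $L_H\subseteq I_{B'}$ we are in case (1); so assume $L_H\not\subseteq I_{B'}$ and fix $j^*\in L_H\setminus I_{B'}$. If $B'$ had type $\le 3$ then $x_{j^*}\ge\tfrac34$ (from $A\in B'$, as $j^*\notin I_{B'}$) while $x_{j^*}\le\tfrac12$ (from $A\in H$, as $j^*\in L_H$), impossible; so $B'$ has type $4$. If $H$ had type $\le 3$ then $x_{j^*}\le\tfrac14$ (from $A\in H$) while $x_{j^*}\ge\tfrac12$ (from $A\in B'$, as $j^*\notin I_{B'}$), impossible; so $H$ has type $4$. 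This is case (2). For its last clause, suppose in addition $A$ lay on a basic hyperplane $B''$ of type $t''\le 3$. Since $|I_{B'}|=4>t''=|I_{B''}|$ there is $i_0\in I_{B'}\setminus I_{B''}$; then $x_{i_0}\le x_{j^*}\le\tfrac12$ (swap on $B'$, together with $x_{j^*}\le\tfrac12$ from $A\in H$), yet $x_{i_0}\ge\tfrac34$ (enlarge the type set of $B''$, as $i_0\notin I_{B''}$), a contradiction. Translating back, $L_H\subseteq I_{B'}$ is conclusion (1) and ``$B'$ and $H$ of type $4$'' is conclusion (2).

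I expect no conceptual obstacle; the work is bookkeeping. The two points that need care are (i) verifying that every neighbouring hyperplane invoked is actually one of the defining faces of $T$ — this is where $|I|+1\le 4$ and $|I|+|L|\le k$ get used, and also where one must handle the degenerate cases in which the relevant index set already exhausts $\{1,\dots,k\}$ — and (ii) in the last clause, spotting the pigeonhole $|I_{B'}|=4>t''$ that produces $i_0$. A secondary subtlety, already flagged, is that the hypothesis ``$H=\sigma_J(B)$'' should be read with $J$ equal to (the part outside $I_H$ of) the level set $L_H$, since for a larger $J$ the clean dichotomy in the conclusion would fail.
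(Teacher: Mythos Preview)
Your proof is correct and follows the same underlying strategy as the paper: derive coordinate bounds by comparing the equality ``$A$ lies on $(I,L)$'' against neighbouring defining inequalities of $T$, then combine these bounds into contradictions. Your toolbox formulation makes this systematic, and your check that each neighbouring face invoked is a genuine face of $T$ is handled as carefully as in the paper. The one noteworthy difference is in the final clause of case~(2): the paper argues through an intermediate chain --- first pinning down $x_{j_n}=1/2$, then deducing $x_{k_4}=3/4$ together with equality on an auxiliary type-$3$ basic face, before reaching the final contradiction --- whereas your swap bound $x_{i_0}\le x_{j^*}$ (obtained by comparing two basic type-$4$ faces differing in one index) gives $x_{i_0}\le 1/2$ in one step and collides directly with $x_{i_0}\ge 3/4$ from $B''$. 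This is a cleaner route to the same endpoint. Your remark that the hypothesis should be read with $J=L_H$ (only the part of $J$ outside $I_H$ matters for invariance) is also correct and is exactly what the paper's own proof tacitly uses.
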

	\begin{proof}
		Let $A=(x_1,\dots,x_k,z)$ and $J=\{j_1,\dots,j_\l\}$. Then since $A\in H=\sigma_J(B)$, it satisfies an equation of the form
		\begin{equation}\label{eq}
			H:\hspace{20pt}x_{i_1}+\dots+x_{i_t}+2x_{j_1}+\dots+2x_{j_\l}+2z=\frac{4+3t}{4}+\l.
		\end{equation}
		
		Let us now take any $1\leq n\leq \l$. Then the inequality given by the hyperplane $\sigma_{J\setminus\{j_n\}}(B)$ gives
		\begin{equation}\label{}
			x_{i_1}+\dots+x_{i_t}+2x_{j_1}+\dots+2x_{j_\l}-2x_{j_n}+2z\geq \frac{4+3t}{4}+\l-1,
		\end{equation} so we have
		\begin{equation}\label{}
			\frac{4+3t}{4}+\l=x_{i_1}+\dots+x_{i_t}+2x_{j_1}+\dots+2x_{j_\l}+2z\geq 2x_{j_n}+\frac{4+3t}{4}+\l-1,
		\end{equation}
		so \begin{equation}\label{eqn: x_j_n<1/2}
			x_{j_n}\leq 1/2.
		\end{equation}
		
		Now assume that $A\in B'$ for some basic hyperplane $B'$ of type $u$. First, we assume that $u\neq 4$ and show that $B'$ must be invariant under $\sigma_J$.
		
		Assume for contradiction that $B'$ is not invariant under $\sigma_J$ and has type $u\leq 3$. Then we have
		\begin{equation}\label{eqn: basic equality for u<4}
			x_{k_1}+\dots+x_{k_u}+2z=\frac{4+3u}{4}
		\end{equation} for some $k_1,\dots,k_u$, and since it is not invariant under $\sigma_J$, there is some $1\leq n\leq\l$ such that
		$j_n\notin\{k_1,\dots,k_u\}$. Since $A\in T$ and $u<4$, we may consider the basic inequality of type $u+1$
		\begin{equation}\label{key}
			x_{k_1}+\dots+x_{k_u}+x_{j_n}+2z\geq \frac{4+3(u+1)}{4},
		\end{equation} so \eqref{eqn: x_j_n<1/2} gives
		\begin{equation}\label{key}
			x_{k_1}+\dots+x_{k_u}+2z\geq \frac{4+3(u+1)}{4}-x_{j_n}\geq \frac{4+3u}{4}+\frac14,
		\end{equation} which is a contradiction with \eqref{eqn: basic equality for u<4}.
		
		Now we prove that if $A\in B'$, and $B'$ is not invariant under $\sigma_J$, then $t=4.$ We know from the previous part that $u=4$ in this case, so $B'$ is given by the equation
		\begin{equation}\label{key}
			B':\hspace{20pt}	x_{k_1}+\dots+x_{k_4}+2z=4.
		\end{equation} 
		For every $j\notin\{k_1,\dots,k_4\}$, we have
		\begin{equation}\label{key}
			5\leq x_{k_1}+\dots+x_{k_4}+2x_j+2z=4+2x_j,
		\end{equation} so $x_j\geq 1/2$. Recall from \eqref{eqn: x_j_n<1/2} that $x_{j_1},\dots,x_{j_\l}\leq 1/2.$ Since $B'$ is not invariant under $\sigma_J$, there is some $1\leq n\leq \l$ such that $j_n\notin\{k_1,\dots,k_4\}$, and it follows that \begin{equation}\label{eqn: x_j_n=1/2}
			x_{j_n}=1/2.
		\end{equation} Now assume for contradiction that $t\leq 3$, and consider the type $t+1$, level $\l-1$ expression
		\begin{equation}\label{key}
			x_{i_1}+\dots+x_{i_t}+x_{j_n}+2x_{j_1}+\dots+2x_{j_\l}-2x_{j_n}+2z.
		\end{equation} Since $A\in T$, this must be $\geq \frac{4+3(t+1)}{4}+\l-1=\frac{4+3t}{4}+\l-\frac14$, but from $A\in H$ and $x_{j_n}=1/2$, this equals $\frac{4+3t}{4}+\l-\frac12,$ which is a contradiction. Therefore $t=4$.
		
		It remains to prove that in this case, $A$ doesn't lie on any basic hyperplane of type $\leq 3$. Assume that such a hyperplane exists and is given by the equation
		\begin{equation}\label{key}
			x_{m_1}+\dots+x_{m_v}+2z=\frac{4+3v}{4}.
		\end{equation} Since $v\leq 3$, for any $j\notin\{m_1,\dots,m_v\}$, we have
		\begin{equation}\label{key}
			\frac{4+3(v+1)}{4}\leq x_{m_1}+\dots +x_{m_v}+x_j+2z=\frac{4+3v}{4}+x_j,
		\end{equation} so $x_j\geq 3/4.$ Let $w$ be such that $k_w\notin\{m_1,\dots,m_v\}$, without loss of generality $w=4$.
		Then $x_{k_4}\geq 3/4$, but also from
		\begin{equation}\label{eqn: two conditions}
			x_{k_1}+x_{k_2}+x_{k_3}+x_{k_4}+2z=4, \hbox{ and } x_{k_1}+x_{k_2}+x_{k_3}+2z\geq 13/4,
		\end{equation} we find that $x_{k_4}\leq 3/4$, so $x_{k_4}=3/4$, and we have equality in the inequality from \eqref{eqn: two conditions}. But then for $x_{j_n}$ from \eqref{eqn: x_j_n=1/2},
		\begin{equation}\label{key}
			x_{k_1}+x_{k_2}+x_{k_3}+x_{j_n}+2z=\frac{13}{4}+\frac12<2,
		\end{equation} which is a contradiction with $A\in T$.
	\end{proof}
	
	\begin{proof}[Proof of Proposition \ref{prop: extremal points}]
		The extremal points of $T$ are at intersections of at least $k+1$ of the boundary hyperplanes.
		
		The point $P=(3/4,\dots,3/4,1/2)$ is the unique intersection of all basic hyperplanes, so is an extremal point. Also the point $\sigma_J(P)$ is the unique intersection of all level $|J|$ hyperplanes.
		
		The point $Q=(1/2,\dots,1/2,1)$ lies on the type 4 hyperplanes of any level. There is none such hyperplane for $k\leq 3$ and only one for $k=4$. For $k\geq 5$, there are more than $k+1$ of them and $Q$ is their unique intersection, so it is extremal in this case.
		
		Therefore the given points are extremal. It remains to show that there are no others.
		
		Assume that $E$ is an extremal point of $T$ that lies on at least $k+1$ hyperplanes. If $E$ lies on a hyperplane of type 0, it is of the form $\sigma_J(P)$ by Lemma \ref{lemma: type 0 hyperplanes}. If $E$ lies only on hyperplanes of type $4$, then $E=Q$. 
		
		Otherwise, $E$ lies on some hyperplane $B'$ of type $t$ with $1\leq t\leq 3$. We may assume that $B'$ is basic, as otherwise we can replace $E$ by $\sigma_J(E)$ for some $J\subset\{1,\dots,k\}$.
		
		Assume now that $E\in \bigcap\limits_{H\in\mathcal H}H$, where $\mathcal H$ is the set of all hyperplanes that contain $E$. We show that upon possibly replacing $E$ by $\sigma_{J'}(E)$ for some ${J'}\subset\{1,\dots,k\}$, we may assume that all hyperplanes in $\mathcal H$ are basic. Let $\mathcal H=\mathcal H_{B}\cup \mathcal H_{NB}$, where $\mathcal H_B$ consists of all basic hyperplanes that contain $E$ and $\mathcal H_{NB}$ of the rest. We already know that $\mathcal H_B$ is nonempty and contains some hyperplane of type $1,2$ or $3$. For any $H\in \mathcal H_{NB}$, assume that $H=\sigma_{J_1}(B)$ where $B$ is basic. Then by Lemma \ref{Main lemma extremal points}, $\sigma_{J_1}(B')=B'$ for each $B'\in\mathcal H_B$, so we may replace $E$ by $\sigma_{J_1}(E)$, the non-basic hyperplane $H$ by the basic hyperplane $B$ while ensuring that all the elements of $\mathcal H_B$ remain basic.
		
		Continuing inductively, we find that after possibly replacing $E$ by $\sigma_{J'}(E)$ for some $J'\subset\{1,\dots,k\}$, $E$ lies at the intersection of basic hyperplanes.
		But then $E=P$, as we wanted to prove.
	\end{proof}
	It remains to find the extremal rays of $T$.
	\begin{proposition}\label{prop: rays}
		The extremal rays of $T$ are $v_1,\dots,v_k,w_1,\dots,w_k,$ where
		$v_j$ is the $j$th standard basis vector, and $w_j=v_{k+1}-v_j.$
	\end{proposition}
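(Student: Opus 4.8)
The plan is to apply Proposition \ref{prop: extremal points and extremal rays definition}(2): the extremal rays of $T$ coincide with those of the recession cone $P_{\mathrm{hom}}=\{(x_1,\dots,x_k,z):Ax\geq 0\}$, where $A$ encodes all the type $t$, level $\l$ inequalities of Theorem \ref{thm: region for main theorem}. The key observation I would exploit is that in the \emph{homogeneous} system the type $t$ inequalities with $t\geq 1$ are redundant: as linear functionals,
\begin{equation}
x_{i_1}+\dots+x_{i_t}+2x_{j_1}+\dots+2x_{j_\l}+2z=\big(x_{i_1}+\dots+x_{i_t}+x_{j_1}+\dots+x_{j_\l}+z\big)+\big(x_{j_1}+\dots+x_{j_\l}+z\big),
\end{equation}
a sum of a type $0$, level $t+\l$ functional and a type $0$, level $\l$ functional (both available since $t+\l\leq k$). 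Hence $P_{\mathrm{hom}}$ is cut out by the type $0$ inequalities $\sum_{i\in S}x_i+z\geq0$, $S\subseteq\{1,\dots,k\}$, alone; the binding one is $S=\{i:x_i<0\}$, so
\begin{equation}
P_{\mathrm{hom}}=\Big\{(x_1,\dots,x_k,z):z\geq\sum_{i=1}^k\max(-x_i,0)\Big\}.
\end{equation}
Moreover a tight type $t$ ($t\geq1$) constraint forces both type $0$ constraints in the decomposition above to be tight, so $A_{\mathrm{tight}}(r;0)$ has the same rank as the collection of tight type $0$ constraints at $r$, and I may work only with those.

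For the easy direction I would check that $v_j$ ($1\le j\le k$) and $w_j=v_{k+1}-v_j$ lie on $\partial P_{\mathrm{hom}}$ (both satisfy $z=\sum_i\max(-x_i,0)$, namely $0=0$ and $1=1$), and that $v_1,\dots,v_k,w_1,\dots,w_k$ generate $P_{\mathrm{hom}}$: writing $x_i=\max(x_i,0)-\max(-x_i,0)$, one expresses $(x,z)\in P_{\mathrm{hom}}$ as $\sum_i\max(x_i,0)v_i+\sum_i\max(-x_i,0)w_i$ plus $\big(z-\sum_i\max(-x_i,0)\big)v_{k+1}$, absorbing the last (nonnegative) term via $v_1+w_1=v_{k+1}$. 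To see each generator is extremal I list the tight type $0$ constraints and check rank $k$: at $v_j$ the constraints $z=0$ and $x_i+z=0$ ($i\neq j$) are tight, with normals $v_{k+1}$ and $v_i+v_{k+1}$ spanning rank $k$; at $w_j$ the constraints $x_j+z=0$ and $x_i+x_j+z=0$ ($i\neq j$) are tight, with normals spanning $v_j+v_{k+1}$ and all $v_i$, $i\neq j$, again rank $k$.

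For the converse, $P_{\mathrm{hom}}$ is a pointed cone (if $\pm(x,z)\in P_{\mathrm{hom}}$ then $z=0$, forcing $x=0$), so its extremal rays form a subset of any generating set; together with the previous paragraph this gives that $v_1,\dots,v_k,w_1,\dots,w_k$ are exactly the extremal rays. Alternatively — closer to the style of the proof of Proposition \ref{prop: extremal points} — one argues directly from the rank condition: an extremal ray $r=(x,z)$ has tight type $0$ constraints of rank $k$; if $z=0$ then all $x_i\geq0$, the tight constraints are $\{\sum_{i\in S}x_i+z=0:S\subseteq Z\}$ with $Z=\{i:x_i=0\}$, of rank $|Z|+1$, so $|Z|=k-1$ and $r$ is a positive multiple of some $v_j$; if $z>0$ then necessarily $z=\sum_i\max(-x_i,0)$ (otherwise no type $0$ constraint is tight), the tight constraints are indexed by $N\subseteq S\subseteq N\cup Z$ with $N=\{i:x_i<0\}$, $Z=\{i\notin N:x_i=0\}$, of rank $|Z|+1$, forcing $|Z|=k-1$, whence $|N|=1$ and $r$ is a positive multiple of some $w_j$.

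I expect the main obstacle to be spotting the redundancy of the type $t\geq1$ inequalities in the homogeneous system; once $P_{\mathrm{hom}}$ is in the transparent form $z\geq\sum_i\max(-x_i,0)$, everything else is routine linear algebra. A minor point is that for $k\leq 4$ some type $t$ hyperplanes are absent, but this causes no trouble, since the whole argument uses only the type $0$ hyperplanes, which are present at every level $0,\dots,k$ regardless of $k$.
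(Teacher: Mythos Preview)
Your proof is correct and rests on the same key observation as the paper: in the homogeneous system, every type $t\geq 1$ functional decomposes as a sum of two type $0$ functionals, so one may work with type $0$ constraints alone. The paper invokes this decomposition locally (inside the $z>0$ case) and then carries out the same direct rank analysis you give as your ``alternatively'' argument.

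Where you differ is in packaging: you make the redundancy global and extract the closed form $P_{\mathrm{hom}}=\{z\geq\sum_i\max(-x_i,0)\}$, then give a second, more structural proof of the ``no others'' direction via pointedness plus an explicit nonnegative generation of $P_{\mathrm{hom}}$ by $v_1,\dots,v_k,w_1,\dots,w_k$. This buys a cleaner argument that avoids the case analysis on the index sets $N,Z$; the paper's route is more hands-on but entirely parallel to your second option. Both are short once the decomposition is seen.
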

	\begin{proof}
		We consider the homogeneous system of inequalities and use the terminology from above, so a type 0, level $\l$ inequality has the form
		\begin{equation}\label{key}
			x_{j_1}+\dots+x_{j_\l}+z\geq 0,
		\end{equation} and a type $t$, level $\l$ inequality for $1\leq t\leq4$, $0\leq \l\leq k-t$ is
		\begin{equation}\label{key}
			x_{i_1}+\dots+x_{i_t}+2x_{j_1}+\dots+2x_{j_\l}+2z\geq 0.
		\end{equation}
		The extremal rays come from the intersection of hyperplanes which form a system of rank $k$. Let $r=(x_1,\dots,x_k,z)$ be an extremal ray of $T$. We consider two cases, depending on whether $z=0$ or $z>0$.
		
		First, if $z=0$, the type $0$ conditions imply that $x_j\geq 0$ for each $j$. Then we can have equality in any of the inequalities only if $x_j=0$ for some of the $j'$s, and since we need a system of rank $k$, we must have $x_j=0$ for all but one $j\in\{1,\dots,k\}$. This gives $r=v_i$ for $i=1,\dots,k.$
		
		In the second case $z>0$, we can normalize $r$ such that $z=1$. Assume now that $r$ lies on a hyperplane of type $>0$, say
		\begin{equation}\label{key}
			x_{i_1}+\dots+x_{i_t}+2x_{j_1}+\dots+2x_{j_\l}+2z=0.
		\end{equation} We may rewrite the equality and use the type 0 inequalities as
		\begin{equation}\label{key}
			0=\underbrace{x_{i_1}+\dots+x_{i_t}+x_{j_1}+\dots+x_{j_\l}+z}_{\geq 0}+\underbrace{x_{j_1}+\dots+x_{j_\l}+z}_{\geq 0}\geq 0,
		\end{equation} 
		which implies that $r$ lies on the intersection of two other type 0 hyperplanes. It follows that all extremal rays come from intersections of type 0 hyperplanes of rank $k$.
		Let us now consider all the negative coordinates of $r$, assume that these are $x_{i_1},\dots,x_{i_m}$. Then $x_{i_1}+\dots+x_{i_m}+z\geq 0,$ and it is smallest possible among all the expressions $x_{j_1}+\dots+x_{j_\l}+z$, so the only possibility for $r$ to lie on a type $0$ hyperplane is that $x_{i_1}+\dots+x_{i_m}=-1$, in which case $r$ lies only on the type 0 hyperplanes $x_{i_1}+\dots+x_{i_m}+z= 0,$ $x_{i_1}+\dots+x_{i_m}+x_{j_1}+\dots+x_{j_n}+z= 0,$ where $x_{j_1},\dots,x_{j_n}=0$.
		For those hyperplanes to form a system of rank $k$, we need $m=1$, so $x_{i_1}=-1$, and $x_{j}=0$ for each $j\neq i_1$. This way we obtain the extremal rays $w_i$ for $i=1,\dots,k$ and no others.
	\end{proof}
	
	\begin{proof}[Proof of Theorem \ref{thm: region for main theorem}]
		We already proved that $S\subset V$. To prove that $V\subset S,$ it suffices to prove the inclusion of the closures of the base sets of the tube domains \\$T\subset \tilde U=\ch\lz \bigcup\limits_{J\subset\{1,\dots,k\}}\sigma_J(U)\pz$, where $U$ is from Proposition \ref{prop: V-representation of R}. Let $P,Q,v_j,w_j$ be as in propositions \ref{prop: extremal points} and \ref{prop: rays}. Note that for any $J\subset\{1,\dots,k\}$, we have $\sigma_J(Q)=Q$ and for $\tilde\sigma_J$ denoting the linear part of $\sigma_J$, so that $\tilde\sigma_J(v)=\sigma_J(v)+(-\mathbb{1}_{1\in J},\dots,-\mathbb{1}_{k\in J},\frac{|J|}{2})$, we have \begin{equation}\label{key}
			\tilde \sigma_J(v_i)=\begin{cases}
				v_i,&\hbox{if $i\notin J$},\\
				w_i,&\hbox{if $i\in J$,}
			\end{cases}
		\end{equation} for $i=1,\dots,k,$ and $\tilde \sigma_J(v_{k+1})=v_{k+1}$. Proposition \ref{prop: V-representation of R} and \eqref{eqn: convex hull of V-representations} therefore implies that
		\begin{equation}\label{key}
			\tilde U=\ch\lz Q\cup\bigcup_{J\subset\{1,\dots,k\}}\sigma_J(P)\pz+\nonneg(v_1,\dots,v_{k+1},w_1,\dots,w_k)
		\end{equation}
		
		Propositions \ref{prop: extremal points} and \ref{prop: rays} imply that
		\begin{equation}\label{key}
			T=\ch\lz Q\cup\bigcup\limits_{J\subset\{1,\dots,k\}}\sigma_J(P)\pz+\nonneg(v_1,\dots,v_k,w_1,\dots,w_k),
		\end{equation} and $T\subset \tilde U$ follows.
		
	\end{proof}
	
	\section{Proof of Theorem \ref{thm: main theorem}}\label{sec: proof of main theorem}
	We now prove Theorem \ref{thm: main theorem}. Let
	\begin{equation}\label{key}
		\tilde A(s_1,\dots,s_k,w):=\prod_{J\subset\{1,\dots,k\}} \lz w+\sum_{j\in J}s_j-1-\frac{|J|}{2}\pz\cdot A(s_1,\dots,s_k,w).
	\end{equation} Then $\tilde A(s_1,\dots,s_k,w)$ is holomorphic in $R$ by \eqref{eqn: region R}, and by \eqref{eqn: A fe} and \eqref{eqn: A residues} imply that it is also holomorphic in $\sigma_J(R)$, so Bochner's Tube Theorem implies that it is holomorphic in \begin{equation}\label{key}
		\ch\lz\bigcup_{J\subset\{1,\dots,k\}}\sigma_J(R)\pz=S.
	\end{equation} A small technical issue is that the regions are not open and only intersect at the boundary half-planes $s_j=1/2$, but this can be easily dealt with by extending the region $R$ -- note that the restriction $\re(s_j)\geq 1/2$ in \eqref{eqn: basic region 1} was made merely for simplicity, and since $L(s,\chi_{8d})$ is polynomially bounded in $d$ for every $s$, we see that \eqref{eqn: A definition} is absolutely convergent for any complex numbers $s_1,\dots,s_j$ if $\re(w)$ is large enough. For instance, assuming GLH and bounding the individual $L$-functions in \eqref{eqn: A definition again} by the maximum of $d^\epsilon$ or $d^{1/2-\re(s_j)+\epsilon}$, we would obtain the region
	\begin{equation}
		\bigcap\limits_{J\subset\{1,\dots,k\}}\left\{ \re\lz w+\sum_{j\in J}s_j\pz\geq 1+\frac{|J|}{2}\right\}.
	\end{equation} Working with this region would not lead to the technical issue since it would not introduce the condition $\re(s_j)\geq 1/2$, but it would be more complicated and would eventually lead to the same result (since it is essentially the simpler region from \eqref{eqn: basic region 1} transformed under $\sigma_J$).
	
	Theorem \ref{thm: region for main theorem} then implies that $\tilde A(s_1,\dots,s_k,w)$ has a holomorphic continuation to the region from Theorem \ref{thm: main theorem}, and \eqref{eqn: A residues} shows that $A(s_1,\dots,s_k,w)$ has only the stated residues and poles in this region.
	
	To show that $A(s_1,\dots,s_k,w)$ is polynomially bounded in vertical strips, it is enough to prove this in the regions $\sigma_J(R)$ in view of Proposition \ref{prop: extending inequalities} and the discussion below. But this is clear, since it holds for all the expressions \eqref{eqn: A definition}, \eqref{eqn: A after exchanging sums}, \eqref{eqn: A fe} (including the gamma factors), \eqref{eqn: A in terms of A_c}, \eqref{eqn: A_c first expression}, \eqref{eqn: A_c second expression}, \eqref{eqn: fe for A_c}, because it is true for the individual $L$-functions $L(s,\chi_{8d})$, $L\lz w,\leg{\cdot}{n}\pz$ and $K\lz w,\leg{\cdot}{n}\pz.$
	
	\begin{remark}\label{remark: possible uniformity}
		In order to prove the corollaries with some uniformity in the $s_j$ variables, one would need to obtain an explicit polynomial bound for $A(s_1,\dots,s_k,w)$ in vertical strips. It might be possible to (conditionally) prove an analogue of the Lindel\"of bound
		\begin{equation}\label{eqn: explicit bound in vertical strips}
			|A(s_1,\dots,s_k,w)|\ll\prod_{j=1}^k (1+|s_j|)^{\max\{0,1/2-\re(s_j)\}} |w|^{A},
		\end{equation} for some $A>0$, which would imply that Conjecture \ref{con: cfkrs} holds in greater uniformity than stated. An analogous result was proved in \cite[Theorem 1.2]{Cec1}. 
		Note that \eqref{eqn: explicit bound in vertical strips} holds (under GLH) in the individual regions $R$ and $\sigma_J(R)$, but the application of Proposition \ref{prop: extending inequalities} is not straightforward as the bound is not a holomorphic function.

		The same might also apply for the twisted moments and uniformity in the twisting variable; in particular, one might be able to replace the $\l^{1/4+\epsilon}$ in the error term of \cite[Theorem 1]{CoRo} by $\l^\epsilon$.
	\end{remark}
	
	\section{Proof of the corollaries}
	In this section, we prove the corollaries of Theorem \ref{thm: main theorem}.
	
	\begin{proof}[Proof of Corollary \ref{cor: first three moments}]
		Let $k=1,2$ or 3 and let $s_1,\dots,s_k$ be complex numbers with real part $\geq 1/2$. By Perron's formula,
		\begin{equation}\label{key}
			\sumstar_{d\geq 1}f\bfrac{d}{X}L(s_1,\chi_{8d})\dots L(s_k,\chi_{8d})=\frac{1}{2\pi i}\int_{(2)}A(s_1,\dots,s_k,w)X^w\tilde f(w)\ dw.
		\end{equation}
		We shift the integral as far to the left as possible, staying in the region where $A(s_1,\dots,s_k,w)$ is defined, and capture the contribution of the poles. 
		Since $A(s_1,\dots,s_k,w)$ is polynomially bounded in vertical strips and $\tilde f(w)$ decays faster than any polynomial, the horizontal integrals vanish. By Theorem \ref{thm: main theorem}, we may shift the integral to $\re(w)=\max\{1/2,e_k\}+\epsilon,$ where $e_k$ is as in Corollary $\ref{cor: first three moments}$, and bounding the shifted integral trivially yields the error term. The main terms are obtained from the poles and their residues \eqref{eqn: main theorem residues}.
	\end{proof}
	
	\begin{proof}[Proof of Corollary \ref{cor: moments to the right of critical line}]
		Let $\nu,\delta, s_1,\dots,s_k$ be as in Corollary \ref{cor: moments to the right of critical line}. By Perron's formula,
		\begin{equation}\label{key}
			\sumstar_{d\geq 1}f\bfrac{d}{X}L(s_1,\chi_{8d})\dots L(s_k,\chi_{8d})=\frac{1}{2\pi i}\int_{(2)}A(s_1,\dots,s_k,w)X^w\tilde f(w)\ dw.
		\end{equation}
		We shift the integral as far to the left as possible, staying in the region where $A(s_1,\dots,s_k,w)$ is defined, and capture the contribution of the poles. 
		Since $A(s_1,\dots,s_k,w)$ is polynomially bounded in vertical strips and $\tilde f(w)$ decays faster than any polynomial, the horizontal integrals vanish.
		
		To find the region where we can shift the integral, note that since $\re(s_j)>1/2$, the main constraints come from the level 0 conditions. We need to ensure that $\re(w)>1/2$ and
		\begin{equation}\label{key}
			\re(w)>\frac{4+3t}{8}-\sum_{j\leq t}\frac{\re(s_{j_1}+\dots+s_{j_t})}{2}
		\end{equation} for $1\leq t\leq 4$, but
		\begin{equation}\label{key}
			\frac{4+3t}{8}-\sum_{j\leq t}\frac{\re(s_{j_1}+\dots+s_{j_t})}{2}<\frac{4+3t}{8}-\frac{t}{4}-\frac{t\delta}{2}=\frac12+\frac{t}{8}-\frac{t\delta}{2},
		\end{equation} and since $\delta<1/4$, the last expression is always $<1-2\delta.$
		
		We may thus shift the integral to $\re(w)=1-2\delta+\epsilon$, and bounding the shifted integral trivially gives the error term.
		
		We capture the main terms from the pole at $w=1$, which gives the diagonal contribution, and the poles at $w=3/2-s_j$, as $\re(3/2-s_j)>1-2\delta+\nu\delta$, which give the 1-swap terms.
		
		We don't capture any other poles, as all the poles occur at $w=1+\frac{|J|}{2}-\sum\limits_{j\in J}s_j$ for some $J\subset\{1,\dots,k\}$, and \begin{equation}\label{key}
			\re\lz1+\frac{|J|}{2}-\sum\limits_{j\in J}s_j\pz<1-|J|\delta
		\end{equation} is outside of the region of integration if $|J|\geq 2$.
	\end{proof}
	
	\begin{proof}[Proof of Corollary \ref{cor: long Dirichlet polynomials}]
		Let $s_1,\dots,s_k$ be as in Corollary \ref{cor: long Dirichlet polynomials}. Applying Perron's formula twice, we have
		\begin{equation}\label{key}
			\begin{aligned}
				&\sumstar_{d\geq 1}f(d/X)\sum_{n_1,\dots,n_k\geq 1}\frac{\chi_{8d}(n_1\dots n_k)}{n_1^{s_1}\dots n_k^{s_k}}W\bfrac{n_1\dots n_k}{N}\\
				&=\sumstar_{d\geq 1}f(d/X)\sum_{n\geq 1}\chi_{8d}(n)W\bfrac{n}{N}\sum_{n=n_1\dots n_k}\frac{1}{n_1^{s_1}\dots n_k^{s_k}}\\
				&=\bfrac{1}{2\pi i}^2\int_{(2)}\int_{(2)}A(s_1+s,\dots,s_k+s,w)N^s X^w \tilde N(s)\tilde f(w)\ dw\ ds.
			\end{aligned}
		\end{equation}
		We first shift the $s$-integral to $\re(s)=1/4$ and the $w$-integral to $\re(w)=1/2+\epsilon$. To see that we stay within the boundary of the region $S$ where $A(s_1+s,\dots,s_k+s,w)$ is defined, note that $\re(s_j+s)>1/2$, so we only need to verify the level $0$ conditions, which are met.
		
		Bounding the shifted integral trivially gives the error term. While shifting the integrals, we capture the pole at $w=1$ and the poles at $w=3/2-s_j-s,$ which give the 0- and 1-swap terms in the answer. We don't capture any other poles, as all the poles occur at $w=1+\frac{|J|}{2}-\sum\limits_{j\in J}(s_j+s)$ for some $J\subset\{1,\dots,k\}$, and \begin{equation}\label{key}
			\re\lz1+\frac{|J|}{2}-\sum\limits_{j\in J}(s_j+s)\pz=1-\frac{|J|}{4}+O\bfrac{1}{\log X}
		\end{equation} is outside of the region of integration if $|J|\geq 2$ (provided $X$ is large enough).
	\end{proof}


\begin{thebibliography}{200}
		\bibitem{AlBu} B. Albert, A. Bucur, \emph{Counting number fields using multiple Dirichlet series}, preprint (2026), available at 	arXiv:2602.23619.
		\bibitem{Ale} A. D. Alexandrov, \emph{Convex polyhedra}, translated from the 1950 Russian edition by N.S. Dairbekov, S.S. Kutateladze, A. B. Sossinsky, Springer Monographs in Mathematics. Springer, Heidelberg (2005).
		\bibitem{BaCe} S. Baluyot, M. \v Cech, \emph{Multiple Dirichlet series predictions for moments of unitary, symplectic and orthogonal families of $L$-functions}, preprint (2025), available at arXiv:2501.12529.
		\bibitem{BaTu} S. Baluyot, C. Turnage-Butterbaugh, \emph{Twisted $2k$th moments of primitive Dirichlet L-functions: beyond the diagonal}, preprint, available at arXiv:2205.00641.
		
		\bibitem{Bas} A. Basu, \emph{Introduction to convexity}, lecture notes, available online at \url{https://www.ams.jhu.edu/~abasu9/AMS_550-465/notes-without-frills.pdf} (accessed February 2024).
		\bibitem{Cec1} M. \v Cech, \emph{The Ratios conjecture for real Dirichlet characters and multiple Dirichlet series}, Trans. Amer. Math. Soc. 377 (2024), 3487-352.
		\bibitem{Cec2} M. \v Cech, \emph{Mean value of real characters using a double Dirichlet series}, Canad. Math. Bull. 66 (4) (2023), 1135-1151.
		\bibitem{first_moment} M. \v Cech, \emph{First moment of quadratic Dirichlet L-functions with secondary terms}, preprint (2025), available at arXiv:2509.03197.
		\bibitem{CFKRS} J.B. Conrey, D.W. Farmer, J.P. Keating, M.O. Rubinstein and N.C. Snaith. \emph{Integral moments of
			L-functions}, Proc. Lond. Math. Soc. 91 (2005) 33-104.
		\bibitem{CoFa} J.B. Conrey, A. Fazzari, \emph{Averages of long Dirichlet polynomials with modular coefficients}, Mathematika 69 (4) (2023), 1060-1080.
		\bibitem{CoKe1} J. B. Conrey, J. P. Keating, \emph{Moments of zeta and correlations of divisor-sums: I}, Philos.
		Trans. Roy. Soc. A 373: 20140313 (2015).
		
		\bibitem{CoKe2} J. B. Conrey, J. P. Keating, \emph{Moments of zeta and correlations of divisor-sums: II}, Advances in the Theory of Numbers. Vol. 77. Fields Inst. Commun. Fields Inst. Res. Math. Sci., Toronto,
		ON, 2015, pp. 75-85.
		\bibitem{CoKe3} J. B. Conrey, J. P. Keating, \emph{Moments of zeta and correlations of divisor-sums: III}, Indag. Math. (N. S.) 26 (2015), 736-747.
		\bibitem{CoKe4} J. B. Conrey, J. P. Keating, \emph{Moments of zeta and correlations of divisor-sums: IV}, Res. Number Theory 2, Article number: 24 (2016).
		\bibitem{CoKe5} J. B. Conrey, J. P. Keating, \emph{Moments of zeta and correlations of divisor-sums: V}, Proc.
		London Math. Soc. (3) 118 (2019), 729-752.
		\bibitem{CoRo} J. B. Conrey, B. Rodgers, \emph{Averages of quadratic twists of long Dirichlet polynomials}, preprint, available at arXiv:2208.01783.
		\bibitem{DGH} A. Diaconu, D. Goldfeld, J. Hoffstein \textit{Multiple Dirichlet series and moments of zeta and $L$-functions}. Compositio Math. \textbf{139} (2003), 297--360.
		\bibitem{DiWh} A. Diaconu, I. Whitehead, On the third moment of $L(1/2,\chi_d)$ {II}: the number field case. J. Eur. Math. Soc. 23 (2021), no. 6, pp. 2051-2070.
		\bibitem{GaZh1} P. Gao, L. Zhao, \emph{First moment of central values of Hecke $L$-functions with Fixed Order Characters}, preprint, available at arXiv:2306.10726.

		\bibitem{GaZh3} P. Gao, L. Zhao, \emph{Ratios conjecture for quadratic Hecke L-functions in the Gaussian field}, Monatsh. Math. 203 (2024), 63-90.
		\bibitem{GaZh4} P. Gao, L. Zhao, \emph{Ratios conjecture of cubic $L$-functions of prime moduli}, preprint, available at arXiv:2311.08626.
		\bibitem{GaZh5} P. Gao, L. Zhao, \emph{Ratios conjecture for quadratic twist of modular $L$-functions}, preprint, available at arXiv:2304.14600.
		\bibitem{GoHo} D. Goldfeld, J. Hoffstein, \emph{Eisenstein series of 1/2-integral weight and the mean value of real Dirichlet
			L-series,} Inventiones Math. 80 (1985), 185-208.
		\bibitem{HaNg} A. Hamieh and N. Ng. \emph{Mean values of long Dirichlet polynomials with higher divisor coefficients}, Adv. in Math., 410 (B) (2022), 108759. 
		\bibitem{HB} D. R. Heath-Brown, \emph{A mean value estimate for real character sums}. Acta Arith. 72 (3) (1995), 235-275.
		\bibitem{Hor} L. H\"ormander, \emph{An introduction to complex analysis in several variables}, Van Nostrand, Princeton, N.J., 1966.
		\bibitem{HuWe} D. Hug, W. Weil, \emph{Lectures on Convex Geometry}, Grad. Texts in Math. 286, Springer, Cham, 2020.
		\bibitem{Jut} M. Jutila, \emph{On the mean value of $L(\tfrac12,\chi)$ for real characters} Analysis, 1(2):149-161, 1981
		\bibitem{KeSn} J. P. Keating and N. C. Snaith. \emph{Random matrix theory and $L$-functions at $s=\frac12$}, Comm. Math.	Phys., 214(1):91-110, 2000.
		\bibitem{Li} X. Li, \emph{Moments of quadratic twists of modular $L$-functions}, preprint, available at 	arXiv:2208.07343.
		\bibitem{She} Q. Shen, \emph{The fourth moment of quadratic Dirichlet $L$-functions}, Math. Z. 298 (2021), no. 1-2,
		713-745.
		\bibitem{ShSt} Q. Shen, J. Stucky, \emph{The fourth moment of quadratic Dirichlet $L$-Functions II}, preprint, available at arXiv:2402.01497. 
		\bibitem{Sono} K. Sono, \emph{The second moment of quadratic Dirichlet $L$-functions}, J. Number Theory 206 (2020), 194--230. 
		
		\bibitem{Sou} K. Soundararajan, \emph{Nonvanishing of Quadratic Dirichlet L-functions at s=1/2}, Ann. of Math. (2) 152 (2000), 447-488.
		\bibitem{SoYo} K. Soundararajan, M. Young, \emph{The second moment of quadratic twists of modular L-functions},
		J. Eur. Math. Soc. (JEMS) 12 (2010), no. 5, 1097--1116
		
		\bibitem{Tit} E.C. Titchmarsh (revised by D.R. Heath-Brown), \emph{The Theory of the Riemann
			Zeta-Function} (2nd ed.), Oxford University Press (1986)
		
		\bibitem{You} M. P. Young, \emph{The first moment of quadratic Dirichlet $L$-functions}, Acta Arithmetica 138 (2009), no. 1, 73-99.
		
		\bibitem{You2} M. P. Young, \emph{The third moment of quadratic Dirichlet $L$-functions}, Selecta Math. (N.S.) 19 (2013), no. 2, 509-543.
	\end{thebibliography}
\end{document}